\setlist[itemize]{leftmargin=1.5em, labelsep=8pt}
\setlist[enumerate]{label=\upshape{(\arabic*)}, leftmargin=*}
\newtheorem{theorem}{Theorem}
\newtheorem*{colcombet}{Colcombet's Theorem}
\newtheorem{lemma}[theorem]{Lemma}
\newtheorem{fact}[theorem]{Fact}
\theoremstyle{remark}
\newtheorem{claim}[theorem]{Claim}
\crefname{claim}{claim}{claims}
\newtheorem{case}{Case}
\newtheorem{subcase}{Case}[case]
\newenvironment{question}{\addvspace{\medskipamount}\begin{narrower}\itshape\noindent\ignorespaces}{\par\end{narrower}\addvspace{\medskipamount}}
\newcommand\setN{\mathbb{N}}
\newcommand\cgB{\mathcal{B}}
\newcommand\cgC{\mathcal{C}}
\newcommand\cgF{\mathcal{F}}
\newcommand\cgM{\mathcal{M}}
\newcommand\cgP{\mathcal{P}}
\newcommand\Oh{\mathcal{O}}
\newcommand\decode{\mathsf{decode}}
\newcommand\se{\operatorname{se}}
\newcommand\kelly{\operatorname{Kelly}}
\newcommand\id{\mathrm{id}}
\newcommand\inner{{\mathrm{inner}}}
\renewcommand\outer{{\mathrm{outer}}}
\newcommand\downto[1][T]{\prec_{#1}}
\newcommand\downtoeq[1][T]{\preceq_{#1}}
\newcommand\lablr[2]{L^\leq(#1,#2)}
\newcommand\labgr[2]{L^\geq(#1,#2)}
\newcommand\bagll[2]{\eta^\leq(#1,#2)}
\newcommand\baglb[2]{\rho^\leq(#1,#2)}
\newcommand\baggl[2]{\eta^\geq(#1,#2)}
\newcommand\baggb[2]{\rho^\geq(#1,#2)}
\newcommand\bagbl[2]{\eta(#1,#2)}
\newcommand\bagbb[2]{\rho(#1,#2)}
\newcommand\initial[1]{\eta(#1)}
\newcommand\refpreceq{\preceq_*}
\newcommand\dir{\mathsf{dir}}
\newcommand\agg{\mathsf{agg}}
\newcommand\rel{\mathsf{rel}}
\newcommand\init{\mathsf{init}}
\DeclareMathOperator\bdim{bdim}
\DeclarePairedDelimiter\set{\{}{\}}
\DeclarePairedDelimiter\size{\lvert}{\rvert}
\let\leq\leqslant
\let\geq\geqslant
\let\nleq\nleqslant
\let\preceq\preccurlyeq
\let\succeq\succcurlyeq
\let\from\leftarrow
\let\setminus-
\let\Gamma\varGamma
\let\Lambda\varLambda
\let\Phi\varPhi
\let\Psi\varPsi
\let\rho\varrho
\let\epsilon\varepsilon
\let\type\Diamond
\begin{document}

\title{Cliquewidth and dimension}

\author[Joret]{Gwenaël Joret}
\email{gwenael.joret@ulb.be}
\author[Micek]{Piotr Micek}
\email{piotr.micek@uj.edu.pl}
\author[Pilipczuk]{Michał Pilipczuk}
\email{michal.pilipczuk@mimuw.edu.pl}
\author[Walczak]{Bartosz Walczak}
\email{bartosz.walczak@uj.edu.pl}

\address[G.~Joret]{Computer Science Department, Université libre de Bruxelles, Brussels, Belgium}
\address[P.~Micek, B.~Walczak]{Theoretical Computer Science Department, Faculty of Mathematics and Computer Science, Jagiellonian University, Kraków, Poland}
\address[Mi.~Pilipczuk]{Institute of Informatics, Faculty of Mathematics, Informatics, and Mechanics, University of Warsaw, Poland}

\thanks{G.~Joret is supported by a PDR grant from the Belgian National Fund for Scientific Research (FNRS).
P.~Micek is supported by the National Science Center of Poland under grant UMO-2022/47/B/ST6/02837 within the OPUS 24 program.
Mi.~Pilipczuk is supported by the project BOBR that has received funding from the European Research Council (ERC) under the European Union's Horizon 2020 research and innovation programme (grant agreement No 948057).
B.~Walczak is partially supported by the National Science Center of Poland grant 2019/34/E/ST6/00443.}

\begin{textblock}{20}(0.4, 13.45)
\includegraphics[width=35px]{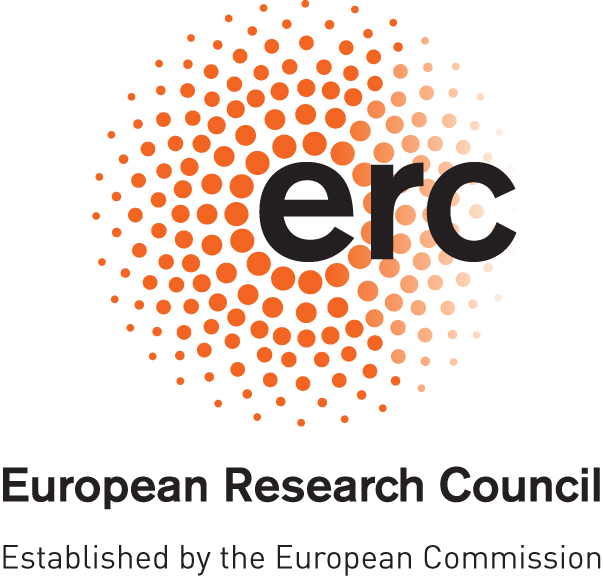}%
\end{textblock}
\begin{textblock}{20}(0.4, 14.25)
\includegraphics[width=35px]{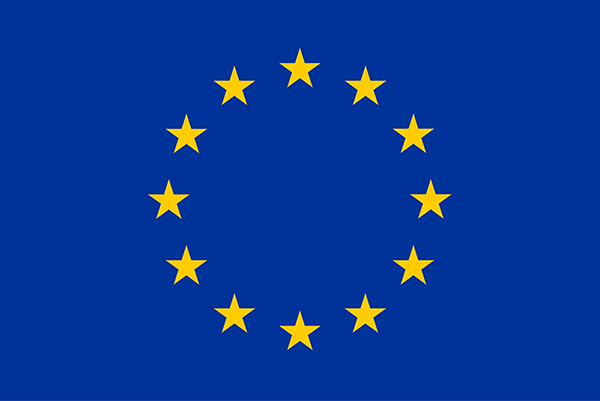}%
\end{textblock}

\begin{abstract}
We prove that every poset with bounded cliquewidth and with sufficiently large dimension contains the standard example of dimension $k$ as a subposet.
This applies in particular to posets whose cover graphs have bounded treewidth, as the cliquewidth of a poset is bounded in terms of the treewidth of the cover graph.
For the latter posets, we prove a stronger statement: every such poset with sufficiently large dimension contains the Kelly example of dimension $k$ as a subposet.
Using this result, we obtain a full characterization of the minor-closed graph classes $\mathcal{C}$ such that posets with cover graphs in $\mathcal{C}$ have bounded dimension: they are exactly the classes excluding the cover graph of some Kelly example.
Finally, we consider a variant of poset dimension called Boolean dimension, and we prove that posets with bounded cliquewidth have bounded Boolean dimension.

The proofs rely on Colcombet's deterministic version of Simon's factorization theorem, which is a fundamental tool in formal language and automata theory, and which we believe deserves a wider recognition in structural and algorithmic graph theory.
\end{abstract}

\maketitle

\section{Introduction}\label{sec:intro}

Dimension is a key measure of complexity of partially ordered sets.%
\footnote{All partially ordered sets considered in this paper are finite.}
It is defined as follows.
A~\emph{linear extension} $L$ of a poset $P$ is a linear order on the elements of $P$ such that if $x\leq y$ in $P$, then $x\leq y$ in $L$.
A \emph{realizer} of a poset $P$ is a set $\set{L_1,\ldots,L_k}$ of
linear extensions of $P$ such that
\[x\leq y\text{ in }P\quad\text{if and only if}\quad (x\leq y\text{ in }L_1){,}\enspace\ldots{,}\enspace(x\leq y\text{ in }L_k){,}\]
for all $x,y\in P$.
The \emph{dimension} of $P$, denoted by $\dim(P)$, is the minimum size of a realizer of $P$.

The notion of dimension goes back to the work of Dushnik and Miller~\cite{DM41} in 1941, where they also identified a canonical structure forcing dimension to be large, namely, the standard examples.
For an integer $k\geq 2$, the \emph{standard example} $S_k$ is a poset whose ground set is $\set{a_1,\ldots,a_k,b_1,\ldots,b_k}$ with $a_i<b_j$ in $S_k$ if and only if $i\neq j$; see Figure~\ref{fig:standard-and-Kelly}.
They observed that $\dim(S_k)=k$, and thus every poset containing $S_k$ as a subposet has dimension at least $k$.

In many ways, dimension for posets resembles chromatic number for graphs, and standard examples in posets are similar to cliques in graphs.
It is well known that there are triangle-free graphs with arbitrarily large chromatic number.
Similarly, there are posets (specifically, interval orders) which contain no $S_2$ and have arbitrarily large dimension.
In the last few decades, a rich theory of $\chi$-boundedness was developed, whose purpose is to identify classes of graphs such that the chromatic number is bounded from above by a function of the clique number; see e.g.\ a survey by Scott and Seymour~\cite{SS20}.
In this paper, we study the poset equivalent of $\chi$-boundedness, defined as follows.

The \emph{standard example number} of a poset $P$, denoted by $\se(P)$, is set to be $1$ if $P$ does not contain a subposet isomorphic to $S_2$; otherwise $\se(P)$ is the largest $k\geq 2$ such that $P$ contains a subposet isomorphic to $S_k$.
A class $\cgC$ of posets is \emph{$\dim$-bounded} if there is a function $f\colon\setN\to\setN$ such that $\dim(P)\leq f(\se(P))$ for every poset $P$ in $\cgC$.

While the notion of $\dim$-boundedness is natural, and the studies on dimension theory goes back to the 1970s, establishing positive results proved to be surprisingly difficult.
In fact, until recently, no non-trivial class of posets was shown to be $\dim$-bounded, even though many were conjectured to be.
The first contribution of this paper is as follows.

\begin{theorem}\label{thm:dim-bounded-main}
Posets of bounded cliquewidth are\/ $\dim$-bounded.
\end{theorem}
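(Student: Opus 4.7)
The plan is to argue by contrapositive: fix integers $c,k\geq 1$, suppose $P$ is a poset of cliquewidth at most $c$ whose dimension is enormous in $c$ and $k$, and locate $S_k$ as a subposet of $P$.

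First I would fix a cliquewidth expression for $P$, realized as a labeled binary tree $T$ whose leaves correspond to elements of $P$. Every internal node $t$ of $T$ then defines a labeled subposet $P_t$ together with a labeling of its elements by one of $c$ labels. The label of an element $x$ records exactly the information about $x$ that can still affect future operations in the expression, and in particular controls whether $x$ can still acquire comparabilities with elements created later. This converts the dimension-theoretic question about $P$ into a combinatorial question about labeled elements along branches of $T$.

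Next I would apply Colcombet's deterministic factorization theorem, which is the tool flagged in the abstract. The relevant monoid is the transition monoid associated with evaluating the cliquewidth expression: a monoid element encodes how a ``context'' above a subtree transforms labelings. Colcombet's theorem produces, for any long path in $T$, a bounded-depth factorization whose idempotent levels exhibit a great deal of uniformity. A Ramsey-type selection of leaves sitting on this path then yields a long sequence $x_1,\ldots,x_N$ of elements of $P$ such that every pair $(x_i,x_j)$ with $i<j$ is governed by the same monoid element, which moreover can be taken to be an idempotent.

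Finally I would extract $S_k$. Complementing the $x$-sequence by a parallel sequence $y_1,\ldots,y_N$ chosen from a sibling region of $T$ (or from the context above the subtree carrying the $x_i$), the idempotent structure together with the assumption that $\dim(P)$ is large should either yield an $S_k$ directly among $k$ well-chosen pairs $(x_{i},y_{i})$, or produce enough uniformity that the block contributes only boundedly to dimension and can be peeled off, enabling induction on the size of the expression. The main obstacle is precisely this last step: cliquewidth operations govern comparabilities by label, but producing the \emph{incomparabilities} $a_i \not< b_i$ simultaneously with \emph{all} the comparabilities $a_i < b_j$ for $i\neq j$ is delicate, and will likely force a case analysis of the idempotent's effect on pairs of labels---separating those idempotents that expose a standard example from those that correspond to subposets of small dimension, for which the recursion can close.
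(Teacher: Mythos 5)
Your high-level plan---encode the poset by a cliquewidth/NLC expression tree, package the ``context'' information into a finite monoid, and apply Colcombet's factorization---matches the paper's strategy. But there are two genuine gaps, and the second is precisely the spot you yourself flag as ``the main obstacle.''

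First, the monoid you propose is too small. The transition monoid of relabelings (how a context above a subtree acts on labels) is exactly the semigroup the paper uses in its Boolean-dimension argument, but it does \emph{not} suffice for Theorem~\ref{thm:dim-bounded-main}. To build a standard example you need to know not just how labels are transported along the tree but whether certain ``cross'' configurations exist between nodes of the split: pairs $(x,y)$ that are incomparable yet have labels putting each in the other's up-set/down-set sets at the right place. The paper therefore enriches the semigroup element $\lambda(u,v)$ to a triple $(\rho(u,v),\Phi(u,v),\Psi(u,v))$ where $\Psi(u,v)$ records, for each quadruple of label/$L^\leq$/$L^\geq$ data, whether an incomparable pair with that signature sits between $u$ and $v$. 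With only the relabeling component, forward Ramseyanity tells you nothing about what incomparable pairs live between consecutive split nodes, and you cannot propagate a cross along a chain.

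Second, your sketch goes from ``dimension is enormous'' directly to ``find a long path and apply Ramsey'' without saying where that path comes from or why large dimension forces it to carry a cross. The paper needs an additional inductive mechanism (its Lemma~\ref{dimension}): for each level $h$ of the Colcombet split it tries to color the incomparable pairs enclosed in each $h$-subtree so that no alternating cycle is monochromatic, reusing colorings from level $h-1$. If this can be done through all $\size{\Lambda}$ levels, the dimension is bounded (by the alternating-cycle characterization of dimension), so ``dimension is large'' forces the induction to fail at some level $h$; the \emph{only} ways it can fail are the presence of a long $h$-chain \emph{and} an $h$-cross. That dual witness is what makes Step~2 possible. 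Without this coloring induction---which is where almost all of the case analysis of the proof lives---there is no argument connecting large dimension to the existence of the structures you want to select via Ramsey. The ``delicacy'' you note at the end (getting the incomparabilities $a_i\parallel b_i$ simultaneously with the comparabilities $a_i<b_j$) is resolved in the paper by the fact that the $h$-cross already contains an incomparable pair, and forward Ramseyanity of the \emph{enriched} semigroup lets you witness a cross with the same signature between \emph{every} consecutive pair of the $h$-chain, so the incomparable pairs $(x_j,y_j)$ come for free once the comparabilities are forced. Your proposal, lacking both the enrichment and the coloring induction, has no substitute for this step.
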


\emph{Cliquewidth} is a graph parameter that measures tree-likeness of a graph; it is the number of labels necessary to build the graph using some simple operations.
It generalizes treewidth, another graph parameter measuring tree-likeness, in the sense that an upper bound on the treewidth of a graph implies an upper bounded on its cliquewidth.
Cliquewidth was first described explicitly by Courcelle and Olariu~\cite{CO00}, though the idea dates back to the work on graph grammars in the 1990s.
In fact, throughout this work we will use an earlier parameter \emph{NLC-width}, introduced by Wanke~\cite{Wanke94}, which is functionally equivalent to cliquewidth in the sense that one can be bounded by a function of the other.
While these parameters were originally defined for undirected graphs, their definitions easily extend to arbitrary binary structures, in particular to directed graphs or posets.
We postpone a formal definition of NLC-width to \Cref{sec:overview,sec:prelims}.

The key ingredient in the proof of \Cref{thm:dim-bounded-main} is the deterministic variant of Simon's Factorization Theorem, due to Colcombet~\cite{Colcombet07}, which is a fundamental tool in the theory of formal languages and automata.
Colcombet's Theorem concerns the setting of a tree $T$ that is edge-labeled by elements of a finite semigroup $(\Lambda,\cdot)$, and it provides a hierarchical decomposition of $T$ (called \emph{factorization}) into smaller and smaller subtrees (called \emph{factors}) satisfying the following: (i) the decomposition has depth depending only on $\size{\Lambda}$, and (ii) every split of a factor into smaller factors present in the decomposition obeys a certain Ramseyan property.
Originally, in~\cite{Colcombet07}, Colcombet used his result to study the expressive power of monadic second-order logic on (infinite) trees.
Applications of Colcombet's Theorem in structural graph theory were pioneered by Bonamy and Pilipczuk~\cite{BP20}, who used a delicate induction on Colcombet factorization to prove that classes of graphs with bounded cliquewidth are polynomially $\chi$-bounded.
Later, Nešetřil, Ossona de Mendez, Pilipczuk, Rabinovich, and Siebertz~\cite{NMPRS21} applied a similar but more involved approach to prove that classes of bounded cliquewidth that exclude a half-graph as a semi-induced subgraph can be transduced in first-order logic from classes of bounded treewidth; this in particular implies that such classes are linearly $\chi$-bounded.

Our usage of Colcombet's Theorem in the proof of \Cref{thm:dim-bounded-main} is very much in the spirit of~\cite{BP20,NMPRS21}.
We view a clique expression (or rather an NLC-decomposition) of a poset as a tree edge-labeled with elements of some very carefully crafted semigroup.
Roughly speaking, these elements encode the relabeling functions present in the expression as well as information on the existence of some basic ``building blocks'' for standard examples, which we call \emph{crosses}.
Then we proceed by induction on Colcombet factorization of the obtained edge-labeled tree to construct a coloring of the incomparable pairs that avoids monochromatic alternating cycles and uses few colors; this is a standard approach to bounding the dimension of a poset.
In each step of the induction, we either succeed in extending the coloring to a larger factor, or we can use the Ramseyan property of the factorization to expose a large standard example.

\vskip 2ex plus 1ex

Let us provide some context for \Cref{thm:dim-bounded-main}.
The \emph{cover graph} of a poset $P$ is the graph on the elements of $P$ with edge set $\{xy\colon x<y$ in $P$ and there is no $z$ with $x<z<y$ in $P\}$.
If we take the lens of structural graph theory, it is natural to wonder whether excluding a fixed minor from the cover graphs leads to posets that are $\dim$-bounded.
This problem is very much open; in fact, a central open problem in the area is to decide whether posets with planar cover graphs are $\dim$-bounded (see~\cite{BMT22} for the current state of the art on this problem).
To attack these questions, it makes sense to first consider some well-understood and rather simple minor-closed classes of graphs, such as graphs of bounded pathwidth or bounded treewidth.
Indeed, our original goal was precisely to show that posets with cover graphs of bounded treewidth are $\dim$-bounded.
As it turned out, the proof works even in the setting of posets of bounded cliquewidth, which is broader---the cliquewidth of a poset can be bounded from above by a function of the treewidth of its cover graph.

Going back to posets with planar cover graphs, it was even speculated for a short time in the 1970s that they had dimension bounded by an absolute constant, which could then be considered a poset analog of the Four-Color Theorem.
However, in 1981, Kelly~\cite{Kelly81} presented a family of posets with planar cover graphs and arbitrarily large dimension.
For an integer $k\geq 3$, the \emph{Kelly example} $\kelly_k$ is a poset whose ground set is $\set{a_2,\ldots,a_{k-1},b_2,\ldots,b_{k-1},c_1,\ldots,c_{k-1},d_1,\ldots,d_{k-1}}$ with $a_2,\ldots,a_i<c_i<b_{i+1},\ldots,b_{k-1}$ and $a_{k-1},\ldots,a_{i+1}<d_i<b_i,\ldots,b_2$ for all $i\in\set{1,\ldots,k-1}$, and with $c_1<\cdots<c_{k-1}$ and $d_1<\cdots<d_{k-1}$; see \Cref{fig:standard-and-Kelly}.%
\footnote{We remark that this is actually a slight variation of the original construction of Kelly.
The latter has four extra elements of degree $1$ in the cover graph, but these are unnecessary.
Our theorems are true for both versions of the construction, we just find it simpler not to have to deal with these vertices of degree $1$ in the proofs.}
In fact, $\kelly_k$ contains $S_k$ as a subposet, so $\dim(\kelly_k)\geq\se(\kelly_k)\geq k$ for all $k\geq 3$.

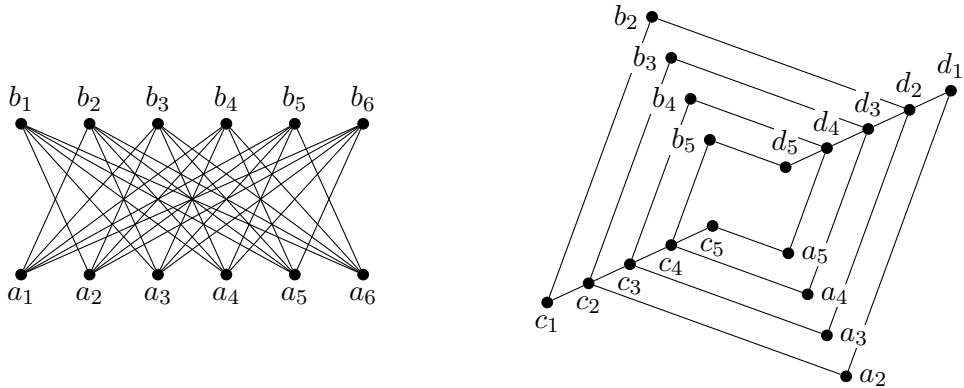
\begin{figure}
\centering
\tikzset{every node/.style={circle,draw,fill,minimum size=4pt,inner sep=0pt}}
\tikzset{every label/.style={rectangle,draw=none,fill=white,inner sep=1pt,label distance=1.5pt}}
\begin{tikzpicture}[xscale=.9,yscale=2,baseline=(current bounding box.center)]
\node[label=below:$a_1$] (a1) at (0,0) {};
\node[label=below:$a_2$] (a2) at (1,0) {};
\node[label=below:$a_3$] (a3) at (2,0) {};
\node[label=below:$a_4$] (a4) at (3,0) {};
\node[label=below:$a_5$] (a5) at (4,0) {};
\node[label=below:$a_6$] (a6) at (5,0) {};
\node[label=above:$b_1$] (b1) at (0,1) {};
\node[label=above:$b_2$] (b2) at (1,1) {};
\node[label=above:$b_3$] (b3) at (2,1) {};
\node[label=above:$b_4$] (b4) at (3,1) {};
\node[label=above:$b_5$] (b5) at (4,1) {};
\node[label=above:$b_6$] (b6) at (5,1) {};
\path (a1) edge (b2) edge (b3) edge (b4) edge (b5) edge (b6);
\path (a2) edge (b1) edge (b3) edge (b4) edge (b5) edge (b6);
\path (a3) edge (b1) edge (b2) edge (b4) edge (b5) edge (b6);
\path (a4) edge (b1) edge (b2) edge (b3) edge (b5) edge (b6);
\path (a5) edge (b1) edge (b2) edge (b3) edge (b4) edge (b6);
\path (a6) edge (b1) edge (b2) edge (b3) edge (b4) edge (b5);
\end{tikzpicture}\hskip 2cm
\begin{tikzpicture}[scale=.6,rotate=25,baseline=(current bounding box.center)]
\node[label=below:$c_1$] (c1) at (-5,-0.25) {};
\node[label=above:$d_1$] (d1) at (5,0.25) {};
\node[label={[yshift=-0.5pt]right:$a_2$}] (a2) at (0.25,-4.5) {};
\node[label=left:$b_2$] (b2) at (-0.25,4.5) {};
\node[label=below:$c_2$] (c2) at (-4,-0.25) {};
\path (c1) edge (b2);
\path (d1) edge (a2);
\node[label=above:$d_2$] (d2) at (4,0.25) {};
\node[label={[yshift=-0.5pt]right:$a_3$}] (a3) at (0.25,-3.5) {};
\node[label=left:$b_3$] (b3) at (-0.25,3.5) {};
\node[label=below:$c_3$] (c3) at (-3,-0.25) {};
\path (c2) edge (c1) edge (a2) edge (b3);
\path (d2) edge (d1) edge (b2) edge (a3);
\node[label=above:$\!d_3\!$] (d3) at (3,0.25) {};
\node[label={[yshift=-0.5pt]right:$a_4$}] (a4) at (0.25,-2.5) {};
\node[label=left:$b_4$] (b4) at (-0.25,2.5) {};
\node[label=below:$c_4$] (c4) at (-2,-0.25) {};
\path (c3) edge (c2) edge (a3) edge (b4);
\path (d3) edge (d2) edge (b3) edge (a4);
\node[label=above:$\!d_4\!$] (d4) at (2,0.25) {};
\node[label={[yshift=-0.5pt]right:$a_5$}] (a5) at (0.25,-1.5) {};
\node[label=left:$b_5$] (b5) at (-0.25,1.5) {};
\node[label=below:$c_5$] (c5) at (-1,-0.25) {};
\path (c4) edge (c3) edge (a4) edge (b5);
\path (d4) edge (d3) edge (b4) edge (a5);
\node[label=above:$\!d_5\!$] (d5) at (1,0.25) {};
\path (c5) edge (c4) edge (a5);
\path (d5) edge (d4) edge (b5);
\end{tikzpicture}
\caption{The standard example $S_6$ (left) and $\kelly_6$ (right).
The poset relation goes upwards in the drawing.
The pairs $(c_1,d_1),(a_2,b_2),\ldots,(a_5,b_5),(d_5,c_5)$ in $\kelly_6$ induce $S_6$ as a subposet.
As we can see, $\kelly_k$ has a planar diagram, and the cover graph of $\kelly_k$ has treewidth (and even pathwidth) at most $3$.}
\label{fig:standard-and-Kelly}
\end{figure}

We show that posets with cover graphs of bounded treewidth and with large dimension contain not only large standard examples but also large Kelly examples.

\begin{theorem}\label{thm:Kelly-dim-bounded-main}
There is a function\/ $f\colon\setN\times\setN\to\setN$ such that for all integers\/ $t\geq 1$ and\/ $k\geq 3$, every poset with cover graph of treewidth less than\/ $t$ and dimension greater than\/ $f(t,k)$ contains\/ $\kelly_k$ as a subposet.
\end{theorem}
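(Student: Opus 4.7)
The plan is to combine Theorem~\ref{thm:dim-bounded-main} with a structural analysis of a tree decomposition of the cover graph, upgrading a large standard example in $P$ into a large Kelly example.

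Since the cliquewidth of a poset is bounded by a function of the treewidth of its cover graph, Theorem~\ref{thm:dim-bounded-main} yields a function $g$ such that any poset with cover graph of treewidth less than $t$ and dimension exceeding $g(t,m)$ contains $S_m$ as a subposet. Choose $m=m(t,k)$ sufficiently large in terms of $t$ and $k$; we may then assume that $P$ contains a standard example $S_m$ with elements $a_1,\ldots,a_m,b_1,\ldots,b_m$.

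Fix a tree decomposition $(T,\{B_x\}_{x\in T})$ of the cover graph of $P$ of width less than $t$. For each ordered pair $(i,j)$ with $i\neq j$, and hence $a_i<b_j$ in $P$, fix a witnessing chain of cover relations from $a_i$ to $b_j$; this traces a path in the cover graph and a walk in $T$. Since each bag has size at most $t$, averaging and pigeonhole over bags locate bags hit by many witnessing chains, yielding common intermediate elements $c$ with $a_i<c<b_j$ for many pairs. Applying this extraction hierarchically along $T$, the goal is to identify a subset $I\subseteq\{1,\ldots,m\}$ of size $k$ together with two chains $c_1<\cdots<c_{k-1}$ and $d_1<\cdots<d_{k-1}$ in $P$, whose comparabilities with the $a_i$'s and $b_j$'s (for $i\in I$) realize the Kelly pattern.

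The main obstacle is the last step: extracting two distinct linearly ordered sequences of $k-1$ intermediate elements with the precise comparability and incomparability pattern required by $\kelly_k$ (e.g.\ $a_j<c_i$ only when $2\leq j\leq i$, and symmetrically for the $d$'s). A naive bag-averaging argument produces a single meeting element per pair of sides, whereas Kelly demands $k-1$ of them arranged along a chain. Producing these simultaneously requires a careful induction on the tree $T$: at each step one either deepens the intermediate chains or refines the index set $I$, and one must verify that no unintended comparabilities arise (for instance between $c_i$ and a ``wrong'' $a_j$ or $b_j$, or between $c_i$ and $d_j$). This is where bounded treewidth, rather than merely bounded cliquewidth, is essential: the tree structure of the cover graph provides the combinatorial backbone along which the two chains of intermediates can be routed and kept separated, mirroring the planarity that $\kelly_k$ itself enjoys.
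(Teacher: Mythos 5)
Your plan diverges sharply from the paper's, and it stalls precisely where you say it does: the upgrade from a standard example to a Kelly example is asserted as a ``careful induction on the tree $T$'' but is never carried out. This is not a small finishing detail; it is the entire content of the theorem. If that upgrade could be done by averaging witnessing chains over bags, \Cref{thm:Kelly-dim-bounded-main} would be an easy corollary of \Cref{thm:dim-bounded-main}, and the paper would not have needed a separate argument.

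The paper does not pass through $S_m$ at all. Instead, it builds a \emph{specific} NLC-decomposition of width $4^t$ from the tree decomposition, in which the label $\eta(u,x) = (\{i : z^u_i \geq x\}, \{i : z^u_i \leq x\})$ records exactly how $x$ sits relative to the bag $B(u) = \{z^u_1,\ldots,z^u_t\}$, and the relabelings $\rho(u,v)$ record comparabilities between bag elements of $B(u)$ and $B(v)$. It then re-applies \Cref{dimension,h-cross} (the Colcombet machinery from \Cref{sec:dim-bnd}) to this particular decomposition. The resulting $h$-chain $u_0 \downto \cdots \downto u_k$ hands you a sequence of nested bags $B(u_1),\ldots,B(u_{k-1})$, and the membership conditions $A \subseteq \labgr{y_j}{u_j}$, $B \subseteq \lablr{x_j}{u_j}$ from \Cref{h-cross}, unpacked via \Cref{fact:L-sets-tw}, are exactly what lets you pick out indices $r_j, s_j \in [t]$ such that $c_j := z^{u_j}_{r_j}$ and $d_j := z^{u_j}_{s_j}$ form the two chains and satisfy $x_j \leq c_j \leq y_{j+1}$ and $y_j \geq d_j \geq x_{j+1}$. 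So the intermediate chains are not extracted by averaging; they are \emph{read off} from the bags at the $h$-chain nodes, and their chain property $c_1 \leq \cdots \leq c_{k-1}$ follows from the identity $\lablr{x_j}{u_j}=\lablr{x_{j-1}}{u_{j-1}}\tau_j$ in item (2) of \Cref{h-cross}, which encodes the bag-to-bag relabeling.

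Two further points where your sketch misjudges the difficulty. First, your worry about ``unintended comparabilities'' has an easy resolution that the paper isolates as \Cref{fact:Kelly}: once you have the transitively-generating comparabilities $a_j \leq c_j \leq b_{j+1}$, $b_j \geq d_j \geq a_{j+1}$, the chain conditions, and $a_j \nleq b_j$, \emph{every} forbidden extra comparability among the prospective Kelly elements would force some $a_j \leq b_j$, and the $a_j \nleq b_j$ requirement is exactly the incomparability of the pairs $(x_j,y_j)$. So keeping the chains ``separated'' is automatic; there is no routing argument needed. Second, and more seriously, your plan gives no mechanism for controlling \emph{which} pairs $(i,j)$ a given bag element $c$ is intermediate for, nor for ensuring that the intermediate elements found for different $j$ are themselves comparable in a chain. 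A bag of size $t$ hit by many of the $\binom{m}{2}$ witnessing walks does give you \emph{one} bag element comparable to many $a_i$'s and $b_j$'s by pigeonhole, but stringing $k-1$ of these together with the required monotone pattern is precisely what the Ramseyan property of Colcombet's split provides, and there is no indication that a bare Ramsey/pigeonhole argument on a standard example recovers it. As written, this is a genuine gap, not a deferred detail.
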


At first sight, it might not be clear why forcing Kelly examples, which is more difficult than forcing standard examples, is worth the extra effort.
However, and rather unexpectedly, \Cref{thm:Kelly-dim-bounded-main} turns out to be the key tool to answer the following question:

\begin{question}
What are the minor-closed graph classes\/ $\cgC$ such that posets with cover graphs in\/ $\cgC$ have bounded dimension?
\end{question}

This question has been thoroughly studied over the last years.
Already in 1977, Trotter and Moore~\cite{TM77} showed that dimension is at most $3$ when $\cgC$ is the class of forests.
Felsner, Trotter, and Wiechert~\cite{FTW13} proved that dimension is at most $4$ when $\cgC$ is the class of outerplanar graphs.
When $\cgC$ is the class of graphs with pathwidth $2$, Biró, Keller, and Young~\cite{BKY16} showed that dimension is at most $17$, which was later improved to $6$ by Wiechert~\cite{Wiechert17}.
When $\cgC$ is the class of graphs with treewidth $2$, Joret, Micek, Trotter, Wang, and Wiechert~\cite{JMTWW17} proved that dimension is at most $1276$, which was later improved to $12$ by Seweryn~\cite{Seweryn20}.
Seweryn~\cite{Seweryn23} showed that dimension is bounded by a function of $t$ when $\cgC$ is the class of $K_{2,t}$-minor-free graphs.
Gorsky and Seweryn~\cite{GS21} showed that dimension is bounded by a function of $k$ when $\cgC$ is the class of $k$-outerplanar graphs.
Most recently, Huynh, Joret, Micek, Seweryn, and Wollan~\cite{HJMSW22} showed that dimension is bounded by a function of $t$ when $\cgC$ is the class of graphs excluding a ladder with $t$ rungs (i.e., a $2\times t$ grid) as a minor.
Yet, despite all these partial results, solving the problem completely seemed out of reach until now.%
\footnote{For instance, even showing that excluding a $2$-sum of two $K_4$s suffices to bound the dimension was open.}
Using \Cref{thm:Kelly-dim-bounded-main}, we fully answer the question as follows.

\begin{theorem}\label{thm:excluding-a-minor}
Let\/ $\cgC$ be a minor-closed class of graphs.
The posets with cover graphs in\/ $\cgC$ have dimension bounded from above by a constant depending on\/ $\cgC$ if and only if\/ $\cgC$ excludes the cover graph of\/ $\kelly_k$ for some\/ $k\geq 3$.
\end{theorem}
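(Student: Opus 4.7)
The statement splits into two implications. The easy direction---if $\cgC$ excludes no Kelly cover graph, then the posets with cover graphs in $\cgC$ have unbounded dimension---is immediate: since $\cgC$ is minor-closed and contains the cover graph of $\kelly_k$ for every $k\geq 3$, the poset $\kelly_k$ itself is a poset with cover graph in $\cgC$, and $\dim(\kelly_k)\geq k$.

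For the substantive direction, assume that $\cgC$ excludes the cover graph $G_{k_0}$ of some $\kelly_{k_0}$. Since $G_{k_0}$ is planar (see Figure~\ref{fig:standard-and-Kelly}), $\cgC$ excludes a planar graph; by Robertson--Seymour's theorem that every minor-closed class excluding a planar graph has bounded treewidth, there is an integer $t$ such that every graph in $\cgC$ has treewidth less than $t$. For any poset $P$ with cover graph in $\cgC$ and any $k$, \Cref{thm:Kelly-dim-bounded-main} gives either $\dim(P)\leq f(t,k)$ or $P$ contains $\kelly_k$ as a subposet.

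The key remaining step is a bridging lemma: for $k$ sufficiently large in terms of $k_0$ and $t$, subposet containment of $\kelly_k$ in $P$ forces the cover graph of $P$ to contain $G_{k_0}$ as a minor. Granted this, minor-closedness of $\cgC$ implies $G_{k_0}\in\cgC$, a contradiction; hence $\dim(P)\leq f(t,k)$ and the posets in question have bounded dimension. To establish the bridging lemma, for every cover pair $(u,v)$ in $\kelly_k$ fix a saturated monotone chain in $P$ from $u$ to $v$; since $(u,v)$ is a cover in the subposet, internal vertices of this chain lie in $P\setminus\kelly_k$. These chains are paths in the cover graph of $P$ realising the cover edges of $\kelly_k$. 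If a sub-$\kelly_{k'}$ of $\kelly_k$ with $k'\geq k_0$ can be selected so that its associated chains are pairwise internally vertex-disjoint, contracting those chains exhibits $G_{k'}\supseteq G_{k_0}$ as a minor of the cover graph of $P$.

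The main obstacle is producing internally disjoint chains. The implication ``$Q$ is a subposet of $P$'' $\Rightarrow$ ``cover graph of $Q$ is a minor of cover graph of $P$'' is false in general: e.g., the five-element poset $\{a,b\}<p<\{c,d\}$ has cover graph $K_{1,4}$, while the subposet $\{a,b,c,d\}$ has cover graph $K_{2,2}$, which is not a minor of the tree $K_{1,4}$. Hence the argument must genuinely use bounded treewidth of the cover graph of $P$---for example, via a Menger/linkage step in a tree decomposition combined with a Ramsey-style pruning on the combinatorial skeleton of $\kelly_k$, exploiting its two monotone chains $c_1<\dots<c_{k-1}$ and $d_1<\dots<d_{k-1}$ and their incidences with the $a_i$ and $b_i$ to control overlaps. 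I expect this rerouting step, rather than the dimension-theoretic input of \Cref{thm:Kelly-dim-bounded-main}, to be the delicate heart of the proof.
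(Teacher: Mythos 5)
Your high-level reduction is exactly the paper's: the easy direction is immediate, and for the hard direction you use planarity of the Kelly cover graph plus the Grid Minor Theorem to bound treewidth, then invoke \Cref{thm:Kelly-dim-bounded-main}, and close the loop with a ``bridging lemma'' asserting that $\kelly_N$ as a subposet forces the cover graph of $\kelly_k$ as a minor. This bridging lemma is precisely the paper's \Cref{lem:Kelly_poset_to_minor}, and you are right that it is the delicate heart; your $K_{1,4}$ versus $K_{2,2}$ example correctly shows why subposet containment alone does not transfer to cover-graph minors, so some extra idea is genuinely needed.

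What you leave unproved is that lemma itself, and your speculation about how it goes is somewhat off. The paper does not use a Menger/linkage step inside a tree decomposition. Instead it fixes, for each index $i$, four covering chains in the cover graph of $P$ (witnessing $a_i\le c_i$, $a_i\le d_{i-1}$, $c_{i-1}\le b_i$, $d_i\le b_i$), observes that within each of these four families the paths are pairwise disjoint (an intersection would create a non-existent relation in $\kelly_N$), and then colors each pair $\{i,j\}$ by whether $Z_i$ and $Z_j$ are disjoint and, if not, which of the $4\times 4$ path types intersect. Ramsey's theorem on this $17$-coloring yields a monochromatic $J$: if the color is $0$ (disjoint), one builds a minor model of the cover graph of $\kelly_k$ directly by routing the $c$- and $d$-branches along the two long chains $C$ and $D$ and splitting them at the first/last intersection points $a'_i,a''_i,b'_i,b''_i$; if the color is some $(\alpha,\beta)$, the two families of size $t+1$ pairwise-intersecting paths form a bramble of order $>t$, so the cover graph has treewidth $\ge t$ and the Grid Minor Theorem gives the Kelly minor anyway. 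So Ramsey-style pruning is the right instinct, but the non-disjoint case is resolved by a bramble argument forcing large treewidth rather than by rerouting. Also, the paper's $N=N(k)$ depends only on $k$ (the auxiliary treewidth threshold inside the lemma is a function of $k$ alone), which is a bit cleaner than your ``$k$ large in terms of $k_0$ and $t$'', though either version suffices.
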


Thus, the posets constructed by Kelly in 1981 turned out to be a central piece of the puzzle.
It is also worth recasting \Cref{thm:excluding-a-minor} in the context of the question of whether posets with cover graphs in a fixed proper minor-closed class $\cgC$ are $\dim$-bounded.
While this is very much open, \Cref{thm:excluding-a-minor} makes a first step towards a solution, by showing that if the posets with cover graphs in $\cgC$ have bounded standard example number, then the posets do have bounded dimension.
(Indeed, if $\cgC$ contains the cover graphs of all Kelly examples, then both the standard example number and dimension are unbounded, while if $\cgC$ excludes such a graph, then both are bounded by \Cref{thm:excluding-a-minor}.)

The last contribution of this paper is about a variant of poset dimension called Boolean dimension.
Realizers provide a compact scheme for handling comparability queries: given a realizer $\set{L_1,\dots,L_k}$ for a poset $P$, a query of the form ``is $x\leq y$?'' can be answered by looking at the relative position of $x$ and $y$ in each of the $k$ linear extensions of the realizer.
This motivates the following more general encoding framework for posets, introduced by Gambosi, Nešetřil, and Talamo~\cite{GNT90}.
A \emph{Boolean realizer} of a poset $P$ is a sequence $(L_1,\ldots,L_k)$ of linear orders on the elements of $P$ (which are not necessarily linear extensions of $P$) and a $k$-ary Boolean function $\phi$ such that
\[x\leq y\text{ in }P\quad\text{if and only if}\quad\phi\bigl((x\leq y\text{ in }L_1),\:\ldots,\:(x\leq y\text{ in }L_k)\bigr)=1{,}\]
for all distinct $x,y\in P$.
The \emph{Boolean dimension} of $P$, denoted by $\bdim(P)$, is the minimum size of a Boolean realizer of $P$.
Clearly, for every poset $P$, we have $\bdim(P)\leq\dim(P)$.

In 1989, Nešetřil and Pudlák~\cite{NP89} posed a beautiful open problem:%
\footnote{The original question was in fact posed for posets with planar diagrams, which are more restricted.}

\begin{question}
Do posets with planar cover graphs have bounded Boolean dimension?
\end{question}

It could even be the case that for every proper minor-closed graph class $\cgC$, the posets with cover graphs in $\cgC$ have bounded Boolean dimension.
So far, it is known that posets with planar cover graphs and a unique minimal element have Boolean dimension at most $13$, as proved by Blake, Micek, and Trotter~\cite{BMT22}.
Also, Felsner, Mészáros, and Micek~\cite{FMM20} proved that posets with cover graphs of bounded treewidth have bounded Boolean dimension.
Using our framework, we prove a more general result with an arguably much simpler proof.

\begin{theorem}\label{thm:bdim-main}
Posets of bounded cliquewidth have bounded Boolean dimension.
\end{theorem}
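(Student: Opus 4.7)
The plan is to mimic the strategy of \Cref{thm:dim-bounded-main}, but in the substantially simpler setting of Boolean dimension. Since the Boolean function $\phi$ can be chosen arbitrarily, it suffices to identify a finite set of ``types'' for ordered pairs of elements---of size bounded in the cliquewidth---such that the type determines comparability, and then to encode the type of each pair via the bit pattern of its relative position in a bounded number of linear orders. Unlike for \Cref{thm:dim-bounded-main}, we do not need to avoid monochromatic alternating cycles or extract standard examples.

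First, I would take an NLC-decomposition of $P$ of bounded width: a rooted binary tree $T$ whose leaves correspond bijectively to elements of $P$ and whose internal nodes carry either a disjoint-union-with-connection operation or a relabeling. As in the proof of \Cref{thm:dim-bounded-main}, one labels the edges of $T$ by elements of a finite semigroup $\Lambda$ encoding the composition of relabelings along paths, and applies Colcombet's Theorem to obtain a factorization of $T$ of depth $d$ depending only on $|\Lambda|$, hence only on the cliquewidth.

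The key local observation is that for any two elements $x,y \in P$, their comparability depends only on (i) the least common ancestor $v$ of their leaves in $T$, (ii) the labels $\ell_x, \ell_y$ that $x$ and $y$ carry just before the operation at $v$ is applied (obtained by propagating the initial labels through the relabelings on the paths up to $v$), (iii) the sides of $v$ on which $x$ and $y$ lie, and (iv) the union-rule at $v$. Via the Colcombet factorization, this information is already captured by a bounded amount of ``snapshot'' data, one snapshot per nesting level. Concretely, I would introduce, for every level $i \in \{1, \ldots, d\}$ and every label $\ell$, a linear order on $P$ obtained from a depth-first preorder traversal of $T$ modified so that, within each level-$i$ factor, the elements whose label at the entrance of that factor equals $\ell$ are shifted to the beginning of the factor's range. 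Together with a handful of auxiliary orders for recording sides, the resulting collection of $\Oh(d\cdot k)$ linear orders, where $k$ is the cliquewidth, combined with an appropriately chosen $\phi$, realizes $P$.

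The main technical obstacle will be to verify that the relative position of a pair $(x,y)$ in the proposed linear orders really does allow $\phi$ to recover the deepest level $i^*$ at which $x$ and $y$ still share a factor, together with $(\ell_x, \ell_y)$, the two sides, and the semigroup element associated to the enclosing factor at level $i^*$. This is a careful but routine bookkeeping exercise given that the depth $d$ of the factorization is bounded. Notably---and this is what should make the proof ``arguably much simpler'' than that of \Cref{thm:dim-bounded-main}---we never invoke the Ramseyan property of Colcombet's factorization: only the bounded depth is used.
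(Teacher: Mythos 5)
Your high-level plan is the same as the paper's: take an NLC-decomposition, view it as a semigroup-labeled tree, apply Colcombet's Theorem to get a factorization of bounded depth, and encode enough "snapshot" information in a bounded family of linear orders so that a Boolean decoder can recover comparability. You are also right that the bookkeeping one has to do here is much lighter than in \Cref{thm:dim-bounded-main}, since there is no alternating-cycle machinery.

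However, your explicit claim in the last paragraph --- that ``we never invoke the Ramseyan property of Colcombet's factorization: only the bounded depth is used'' --- is a genuine error, and it hides exactly the step that carries the weight of the proof. For distinct leaves $x$ and $y$ with lowest common ancestor $u$, comparability of $x,y$ is governed by the labels $\bagbl{u}{x}$ and $\bagbl{u}{y}$, i.e.\ by the composition of \emph{all} relabelings along the $u$-to-$x$ path (and similarly for $y$). This path can be arbitrarily long, and even inside a single level of the Colcombet factorization it can pass through arbitrarily many factors. Your linear orders, as described, give you the label of $x$ at the ``entrance'' of the deepest factor shared with $y$; but the lowest common ancestor $u$ lies somewhere strictly \emph{inside} that factor, and since the relabelings $Q\from Q$ are not injective, knowing $\bagbl{r}{x}=\bagbb{r}{u}\bagbl{u}{x}$ for the factor root $r$ does not let you recover $\bagbl{u}{x}$. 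You would need the semigroup element $\bagbb{r}{u}$, which is a product over an unbounded number of edges. The only reason the paper is able to determine $\bagbb{u}{x}$ from a bounded amount of decoded data is the forward Ramseyan absorption property, which collapses the product along an $h$-chain: this is used essentially in \Cref{bdim-aux-composition}, specifically in case~(2) of \Cref{cl:toProject}, where the chain $v_h^0\downto v_h^1\downtoeq v'\downtoeq v_{h+1}$ of $h$-neighbors satisfies $\lambda(v_h^0,v_h^1)\cdot\lambda(v_h^1,v')=\lambda(v_h^0,v_h^1)$ precisely because the split is forward Ramseyan. Without that absorption, a ``snapshot per level'' is not sufficient data, and the bounded-depth argument alone does not close the gap. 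The rest of your plan (encoding the side information and the status relations with a handful of extra orders) is fine and essentially matches the paper's use of the color-detection construction in \Cref{bdim-aux-tree}.
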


In the special case of posets with cover graphs of treewidth $t$, the result from~\cite{FMM20} gives a bound of $\cramped{2^{2^{\Oh(t)}}}$ on the Boolean dimension, while our proof gives a bound of $2^{\cramped{\Oh(t^2)}}$; see \Cref{thm:bdim-treewidth}.

We remark that nothing is specific to posets in the definition of Boolean dimension.
It can be applied to arbitrary directed graphs, where $x\leq y$ (for distinct $x$ and $y$) would mean an edge from $x$ to $y$, and loops would be ignored.
\Cref{thm:bdim-main} works just as well in that more general setting (with the same proof), where it asserts that directed graphs of bounded cliquewidth have bounded Boolean dimension.
In the rest of this paper, following the tradition established in the literature, we define the respective terms and prove \Cref{thm:bdim-main} just for posets.

The paper is organized as follows.
First, we provide an informal overview of our proofs in \Cref{sec:overview}, emphasizing the main ideas and in particular introducing Colcombet's Theorem.
Then, in \Cref{sec:prelims}, we give the necessary definitions and preliminaries.
We prove \Cref{thm:dim-bounded-main}, \Cref{thm:Kelly-dim-bounded-main}, \Cref{thm:excluding-a-minor}, and \Cref{thm:bdim-main} in \Cref{sec:dim-bnd}, \Cref{sec:kelly}, \Cref{sec:minors}, and \Cref{sec:bdim}, respectively.

\section{Overview}\label{sec:overview}

In this section, we provide a more in-depth overview of the contributions of this work, aimed at giving a high-level perspective on our arguments and highlighting the main new ideas.
We note that this section is focused only on providing intuition and can be freely skipped---it is not necessary to follow the rest of the paper.

\subsection*{NLC-width}

We start with recalling the definition of NLC-width, which is a parameter introduced by Wanke~\cite{Wanke94} that is functionally equivalent to cliquewidth.
Following Gurski, Wanke, and Yilmaz~\cite{GWY16}, an \emph{NLC-decomposition} of a directed graph $(X,E)$ is a tuple $(T,Q,\eta,\rho,R,R')$ where $T$ is a binary tree with leaf set $X$ and $Q$ is a set of \emph{labels} such that
\begin{itemize}
\item every leaf $x\in X$ is assigned some \emph{initial label} $\initial{x}\in Q$;
\item every edge $uv$ of $T$, where $u$ is the parent of $v$, is assigned a \emph{relabeling} $\bagbb{u}{v}$ which is a function from $Q$ to $Q$; and
\item every node $u$ of $T$ is assigned two \emph{status relations} $R(u),R'(u)\subseteq Q\times Q$.
\end{itemize}
We extend the relabeling assignment by defining $\bagbb{u}{v}$ for every pair of nodes $u$ and $v$ such that $u$ is an ancestor of $v$ as the composition $\bagbb{u_0}{u_1}\cdots\bagbb{u_{\ell-1}}{u_\ell}$ where $u_0u_1\cdots u_\ell$ is the path from $u$ to $v$ in $T$.
Intuitively, $\bagbb{u}{v}$ describes how the labels at $v$ are transformed into the labels at $u$.
For a leaf $x\in X$ and an ancestor $u$ of $x$, we define the label of $x$ at $u$, denoted by $\bagbl{u}{x}$, as the image of the initial label $\initial{x}$ under the relabeling $\bagbb{u}{x}$.
So the relabeling $\bagbb{u}{v}$ can be thought of as transforming the label of $x$ at $v$ into the label of $x$ at $u$.
Finally, we require that the decomposition encodes $(X,E)$ in the following sense: for all distinct $x,y\in X$, if $u$ is the lowest common ancestor of $x$ and $y$ in $T$, $x$ is a descendant of the left child of $u$, and $y$ is a descendant of the right child of $u$, then
\begin{itemize}
\item $(x,y)\in E$ if and only if $(\bagbl{u}{x},\bagbl{u}{y})\in R(u)$, and
\item $(y,x)\in E$ if and only if $(\bagbl{u}{x},\bagbl{u}{y})\in R'(u)$.
\end{itemize}
The \emph{NLC-width} of a directed graph $(X,E)$ is the least $q$ such that $(X,E)$ admits an NLC-decomposition that uses a label set of size $q$.
This definition can be then applied to posets by treating them as directed graphs in the natural manner.

A reader familiar with the notion of cliquewidth will immediately recognize similar mechanics.
Namely, one may imagine that an NLC-decomposition as above constructs $(X,E)$ in a bottom-up manner, starting with single vertices.
At every point, every vertex of the currently constructed part of $(X,E)$ carries some label from a fixed label set $Q$.
Every step of the construction consists in either applying some relabeling function to the vertices or joining two already constructed parts of $(X,E)$ while creating arcs between them.
The status relations $R$ and $R'$ determine what arcs should be put between vertices from the joined parts, depending on their current labels.
As proved by Gurski, Wanke, and Yilmaz~\cite{GWY16}, the notion of NLC-width of directed graphs defined above differs from the cliquewidth by at most a multiplicative factor of $2$.

\subsection*{Colcombet's Theorem}

We now recall the theorem of Colcombet, which is the fundamental tool underlying our main results.
First, we need a few definitions.

Consider a rooted tree $T$ whose edges are labeled with elements of a finite semigroup $(\Lambda,\cdot)$.
We let $u\downtoeq v$ and $u\downto v$ denote that $u$ is an ancestor of $v$ in $T$ allowing $u=v$ in the first but not in the second case.
For nodes $u\downto v$, we let $\lambda(u,v)$ denote the product of the labels on edges along the $u$-to-$v$ path in $T$.

The decomposition of $T$ provided by Colcombet's Theorem has the form of a \emph{split} of $T$, which is a function $s$ from the inner (non-root, non-leaf) nodes of $T$ to $\set{1,\ldots,n}$, where $n$ is the \emph{order} of the split.
The reader should think of a split as of a hierarchical factorization of $T$: first split $T$ into subtrees along inner nodes with value $n$, then split those subtrees into even smaller subtrees along inner nodes with value $n-1$, and so on.
All subtrees of the factorization can be arranged into \emph{levels} $0,\ldots,n$ so that level $0$ consists of the trivial (unsplittable) subtrees, a subtree $S$ at any level $h\in\set{1,\ldots,n}$ is split by its inner nodes of value $h$ into subtrees at level $h-1$ called the \emph{factors} of $S$ at that level, and level $n$ contains only the whole tree $T$.
(Note that $S$ is a factor of itself at level $h$ if it has no inner nodes of value $h$.)
For $h\in\set{1,\ldots,n}$, we call two nodes $u\downto v$ \emph{$h$-neighbors} if $s(u)=s(v)=h$ and every node $w$ on the $u$-to-$v$ path in $T$ satisfies $s(w)\leq h$.
Intuitively, this means that splitting at $u$ and $v$ happens at the same step of the factorization.

The key Ramseyan property exposed by Colcombet's Theorem is as follows.
A split $s$ is \emph{forward Ramseyan} if for all $h\in\set{1,\ldots,n}$ and all pairs of $h$-neighbors $u\downto v$ and $u'\downto v'$, we have
\[\lambda(u,v)\cdot\lambda(u',v')=\lambda(u,v){.}\]
It is instructive to consider how this condition works when $u\downto v\downto w$ are three $h$-neighbors in a row.
Then the upper element $\lambda(u,v)$ absorbs the lower element $\lambda(v,w)$:
\[\lambda(u,w)=\lambda(u,v)\cdot\lambda(v,w)=\lambda(u,v){.}\]

With these definitions in place, we can now state Colcombet's Theorem, originally proved in~\cite{Colcombet07}.
For the reader's convenience, we provide a self-contained proof of the theorem in \Cref{sec:colcombet}.

\begin{colcombet}
For every rooted tree\/ $T$ edge-labeled with a finite semigroup\/ $(\Lambda,\cdot)$, there exists a split of\/ $T$ of order at most\/ $\size{\Lambda}$ that is forward Ramseyan.
\end{colcombet}

Again, the reader may think of the split provided by Colcombet's Theorem as of a hierarchical factorization, where the values assigned by the split correspond to the levels.
The condition that the split is forward Ramseyan provides the aforementioned absorption property among the newly created factors at every step of the factorization.
Importantly, the number of levels of the factorization is bounded by $\size{\Lambda}+1$, independently of the particular tree $T$.
This allows applying inductive schemes on the factorization with the total number of steps bounded in terms of $\size{\Lambda}$.

Now, consider an arbitrary NLC-decomposition $(T,Q,\eta,\rho,R,R')$.
It is natural to view the rooted tree $T$ as edge-labeled with the relabelings $\rho$, treated as elements of the semigroup of functions from $Q$ to $Q$ with composition as the semigroup operation.
Thus, Colcombet's Theorem provides a split of order bounded by $\size{Q}^{\size{Q}}$.
The forward Ramseyan property asserts the following: if $E_h$ is the set of elements of the form $\bagbb{u}{v}$ for all pairs of $h$-neighbors $u\downto v$, then $\rho\sigma=\rho$ for all $\rho,\sigma\in E_h$.
It is not difficult to see that in terms of functions, this condition can be rephrased as follows: there is a fixed partition $\cgP$ of $Q$ such that every function in $E_h$ acts on $Q$ by mapping every part $A\in\cgP$ to a single element of $A$.
This imposes a very strong structure on the split, which can be used in various inductive arguments.

In their work on polynomial $\chi$-boundedness of graphs of bounded cliquewidth, Bonamy and Pilipczuk~\cite{BP20} used exactly this edge-labeling of an NLC-decomposition of a graph as the base for an application of Colcombet's Theorem.
This is also what we do in the context of Boolean dimension in the proof of \Cref{thm:bdim-main}.
However, for the proof of \Cref{thm:dim-bounded-main}, we need to enrich the considered semigroup with more information.
Intuitively, for $u\downto v$, the semigroup element $\lambda(u,v)$ associated with the $u$-to-$v$ path in $T$ encodes both the relabeling $\bagbb{u}{v}$ and the existence of certain building blocks for standard examples among descendants of $u$ that are not descendants of $v$.
We explain this in more details next.

\subsection*{Sketch of the proof of Theorem \ref{thm:dim-bounded-main}}

We want to prove that for all positive integers $q$ and $k$, every poset with NLC-width $q$ and sufficiently large dimension contains the standard example $S_k$ as a subposet.
Given such a poset $(X,\leq)$ and its NLC-decomposition with label set of size $q$, the proof proceeds in two steps.

\medskip
\textit{Step 1: Coloring incomparable pairs}.
First, we introduce some auxiliary notation.
Fix an element $x\in X$.
The comparability status of $x$ to an (unknown) element $y\in X$ is determined by the label $\bagbl{u}{y}$ of $y$ at the lowest common ancestor $u$ and $x$ and $y$ in $T$, which is then determined by the label $\bagbl{v}{y}$ of $y$ at any node $v$ between $y$ and $u$ in $T$.
This allows us to think of comparability status of $x$ to a label (rather than a specific element) at any node $v$ that is not an ancestor of $x$ in $T$.
Let $\lablr{x}{v}$ be the set of labels $\gamma$ such that ``$x\leq\gamma$ at $v$'', and let $\labgr{x}{v}$ the set of labels $\gamma$ such that ``$x\geq\gamma$ at $v$''.
Thus, whenever $v\downtoeq y\in X$, we have $x\leq y$ if and only if $\bagbl{v}{y}\in\lablr{x}{v}$, and similarly $x\geq y$ if and only if $\bagbl{v}{y}\in\labgr{x}{v}$.

A standard approach to bounding the dimension of a poset relies on the following hypergraph coloring view of dimension.
An \emph{alternating cycle} is a sequence of incomparable pairs $(x_1,y_1),\ldots,(x_m,y_m)$, for some $m\geq 2$, such that $x_i\leq y_{i+1}$ for all $i\in[m]$ (cyclically, so $x_m\leq y_1$) and these are the only comparabilities among $x_1,\ldots,x_m,y_1,\ldots,y_m$.\footnote{In the literature, such a sequence is usually called a \emph{strict alternating cycle}, while an \emph{alternating cycle} is usually defined without the latter condition.
However, we will not consider non-strict alternating cycles.}
It is well known (and easy to see) that the dimension of the poset is equal to the chromatic number of the hypergraph defined on the set of incomparable pairs, having one hyperedge for each alternating cycle.
In other words, in order to bound the dimension from above by $d$, it is enough to color the incomparable pairs with at most $d$ colors so that no alternating cycle becomes monochromatic.

In our proof, we set out to do precisely that, coloring the incomparable pairs using a number of colors bounded by a function of the NLC-width $q$ and of $k$.
We cannot always succeed in avoiding monochromatic alternating cycles (e.g., when the dimension is greater than the number of colors that we use).
The coloring is designed so that if we fail, then we exhibit two structures, a ``long chain'' and a ``cross'', that allow us, in Step~2, to find a standard example $S_k$ as a subposet.

Assume we are given a split of some order $p$, bounded by a function of $q$ and $k$, that is forward Ramseyan with respect to the labeling of $T$ by the relabeling assignment $\rho$.
The existence of such a split is guaranteed by Colcombet's Theorem.
(The actual labeling of $T$ on which we apply Colcombet's Theorem contains more information, but we skip this detail for now and bring it back in Step~2.)
As discussed before, the split provides a factorization of $T$ into subtrees at levels $0,\ldots,p$.
We say that an element or a set of elements of $X$ is \emph{enclosed} in such a subtree if they are all descendants of the root of that subtree.

We consider the subtrees appearing at each level of the factorization, and for each subtree $S$, we consider the alternating cycles whole elements are all descendants of the root of $S$ but are not all descendants of the same leaf of $S$.
At each level, and for each alternating cycle $C$, these conditions are satisfied for a unique subtree $S$, namely, the lowest subtree enclosing $C$ at that level.
For brevity, we say that $C$ is an alternating cycle of $S$.

For every subtree $S$ in the factorization, we aim at a coloring of the incomparable pairs enclosed in $S$ that makes no alternating cycle of $S$ monochromatic.
We try to construct these colorings by induction on the factorization, starting with the trivial trees at the bottom level and ending with the entire $T$.
We succeed with the construction unless we find a long chain and a cross at some level.
If we succeed, then the final coloring for $T$ colors all incomparable pairs without making any alternating cycle monochromatic, so the dimension is bounded by the number of colors used.
For fixed constants $q$ and $k$, the number of colors is bounded at level $0$ and grows with every level as a function of the number of colors used at the previous level.
Therefore, since the number of levels is bounded in terms of $q$ and $k$, so is the total number of colors used by the coloring for $T$.

At level $0$ of the factorization, each subtree $S$ is trivial---it has a root $u$ and two leaves $v$ and $w$ that are the left and the right children of $u$.
All incomparable pairs of descendants of $v$ can be assigned one color, because alternating cycles made of such pairs are not alternating cycles of $S$.
Likewise, all incomparable pairs of descendants of $w$ can be assigned one color (different from the previous one).
For the remaining incomparable pairs, the way how the order relation $\leq$ is defined from the NLC-decomposition implies that the comparability status of $x$ and $y$ is determined by $\bagbl{u}{x}$ and $\bagbl{u}{y}$.
In particular, the incomparable pairs $(x,y)$ with $x$ a descendant of $v$ and $y$ a descendant of $w$ can be colored according to the value of $\bagbl{u}{x}$, because in any alternating cycle of $S$ formed by such pairs, the values of $\bagbl{u}{x}$ are pairwise distinct.
Thus, $q$ colors suffice for the pairs with $x$ a descendant of $v$ and $y$ a descendant of $w$, and vice versa.
Altogether, we have a coloring with $2q+2$ colors that makes no alternating cycle of $S$ monochromatic.

Now, consider a subtree $S$ at level $h\geq 1$.
Let $\cgF$ be the family of factors of $S$ at level $h-1$.
Assume that for every $S'\in\cgF$, we have a coloring of the incomparable pairs enclosed in $S'$ (constructed at level $h-1$) that makes no alternating cycles of $S'$ monochromatic.
We provide two ways of constructing a coloring for $S$, one suitable when the ``height'' of $\cgF$ is small and another suitable when it is large.

In the first coloring, the color of an incomparable pair $(x,y)$ comprises the colors of $(x,y)$ for all subtrees in $\cgF$ that enclose $x$ and $y$ (ordered from the top to the bottom such subtree).
Consider an alternating cycle $C$ of $S$, and let $\bar S$ be the lowest subtree in $\cgF$ enclosing all elements of $C$.
Thus, $C$ is an alternating cycle of $\bar S$, so it is not monochromatic in the coloring for $\bar S$.
Since for every incomparable pair, its color for $S$ includes its color for $\bar S$, the cycle $C$ is not monochromatic in the coloring for $S$ either.

The first coloring is very strong, but in order to have a bounded number of colors, we can use it only when every incomparable pair is enclosed in a bounded number of subtrees in $\cgF$.
If some incomparable pair is enclosed in at least $k+2$ subtrees in $\cgF$, then the roots of any $k+1$ of them other than the top one form a sequence $u_0\downto\cdots\downto u_k$ of mutual $h$-neighbors.
We call such a sequence an \emph{$h$-chain} of length $k$.
It is one of the two structures used for constructing a standard example in Step~2.

An alternative way of coloring the incomparable pairs enclosed in $S$ uses a number of colors that is independent of the ``height'' of $\cgF$.
It makes essential use of the forward Ramseyan property of the set $E_h$ of relabelings between $h$-neighbors.
For the purpose of this overview, we present a suitable coloring under the simplifying assumption that $E_h=\set{\id_Q}$ (where $\id_Q$ is the identity on the label set $Q$) and our only goal is to make alternating cycles of length $2$ monochromatic (disregarding longer alternating cycles).
The key idea shows up already in this case.
The color of an incomparable pair $(x,y)$ comprises the following information:
\begin{itemize}
\item the color of $(x,y)$ for the top subtree in $\cgF$,
\item the color of $(x,y)$ for the lowest subtree in $\cgF$ enclosing $x$ and $y$,
\item the labels $\bagbl{v}{x}$ and $\bagbl{v}{y}$ of $x$ and $y$ at the root $v$ of the lowest subtree in $\cgF$ enclosing $x$ and $y$.
\end{itemize}
Since $E_h=\set{\id_Q}$, if $u$ is the root of any subtree in $\cgF$ enclosing $x$ and $y$ other than the top one, then the labels $\bagbl{u}{x}$ and $\bagbl{u}{y}$ are equal to the labels included in the color of $(x,y)$, as
\[\bagbl{u}{x}=\bagbb{u}{v}\bagbl{v}{x}=\id_Q\bagbl{v}{x}=\bagbl{v}{x} \qquad\text{and}\qquad \bagbl{u}{y}=\bagbl{v}{y}{.}\]

Now, consider what happens if this coloring assigns the same color to two incomparable pairs $(x_1,y_1)$ and $(x_2,y_2)$ that form an alternating cycle of $S$.
Let $\bar S$ be the lowest subtree in $\cgF$ that encloses both pairs.
Thus, not all $x_1$, $y_1$, $x_2$, and $y_2$ are descendants of the same leaf of $\bar S$.
The subtree $\bar S$ cannot be the top subtree in $\cgF$, because the colors for $S$ include the colors for the top subtree in $\cgF$.
Let $u$ be the root of $\bar S$.
The labels $\bagbl{u}{x_i}$ and $\bagbl{u}{y_i}$ are included in the coloring, so they are common for both $i\in\set{1,2}$, equal to some $\alpha$ and $\beta$ respectively.

Since the coloring for $S$ includes the color of $(x_i,y_i)$ for the lowest subtree in $\cgF$ enclosing $x_i$ and $y_i$, that subtree cannot be $\bar S$ for both $i\in\set{1,2}$.
So assume, without loss of generality, that $\bar S$ is not the lowest subtree in $\cgF$ enclosing $x_1$ and $y_1$.
Thus, $x_1$ and $y_1$ are descendants of the same leaf $v$ of $\bar S$ which is not a leaf of $S$.
It follows that $\bagbl{v}{x_1}=\alpha$ and $\bagbl{v}{y_1}=\beta$.
If $x_2$ is also a descendant of $v$ while $y_2$ is not, then $\bagbl{v}{x_2}=\alpha$ as well, so \emph{$y_2$ has the same comparability status to both\/ $x_1$ and\/ $x_2$}, which contradicts the assumption that $(x_1,y_1)$ and $(x_2,y_2)$ form an alternating cycle.
An analogous contradiction is reached when $y_2$ is a descendant of $v$ while $x_2$ is not.
Therefore, neither $x_2$ nor $y_2$ is a descendant of $v$.

A critical use of the fact that the order relation $\leq$ is defined from the NLC-decomposition is made above in the emphasized statement that $y_2$ has the same comparability status to both $x_1$ and $x_2$.
Using the notation introduced before, this comparability status is determined by whether or not $\alpha\in\labgr{y_2}{v}$.
Since neither $x_2$ nor $y_2$ is a descendant of $v$, we have $\alpha\in\labgr{y_2}{v}$ (as $y_2\geq x_1$) and $\beta\in\lablr{x_2}{v}$ (as $x_2\leq y_1$).
We have thus achieved the following key structure used in Step~2 (along with a long $h$-chain) for constructing a standard example: two $h$-neighbors $u\downto v$ and an incomparable pair of descendants $x$ and $y$ of $u$ but not $v$ such that
\[\bagbl{u}{x}\in\labgr{y}{v} \qquad\text{and}\qquad \bagbl{u}{y}\in\lablr{x}{v}{.}\]
We call it an \emph{$h$-cross}, and let us call the quadruple
\[\bigl(\bagbl{u}{x},\:\bagbl{u}{y},\:\lablr{x}{v},\:\labgr{y}{v}\bigr)\]
the \emph{signature} of the $h$-cross.
(The actual definition of $h$-cross used in the proof is slightly more technical, as it needs to accommodate the case of arbitrary $E_h$.)

Altogether, if there is no $h$-chain of length $k$, then we avoid any monochromatic alternating cycles of $S$ by applying the first coloring, which then combines at most $k+1$ colors from level $h-1$, thus using a bounded number of colors in total.
If there is an $h$-chain of length $k$, then we apply the second coloring (which combines two colors from level $h-1$ and two labels) to avoid any monochromatic alternating cycles of $S$ unless there is an $h$-cross.
If we have both an $h$-chain of length $k$ and an $h$-cross, then we proceed to Step~2 in order to find a standard example $S_k$ as a subposet.

\medskip
\textit{Step 2: Building the standard example}.
In Step~1, we have found an $h$-chain $u_0\downto\cdots\downto u_k$ and an $h$-cross with some signature $(\alpha,\beta,B,A)$ such that $\alpha\in A$ and $\beta\in B$.
Suppose that we can find an $h$-cross with the same signature between every consecutive pair of $h$-neighbors in the $h$-chain $u_0\downto\cdots\downto u_k$.
That is, suppose for every $j=1,\ldots,k$, we have an incomparable pair of descendants $x_j$ and $y_j$ of $u_{j-1}$ but not $u_j$ such that
\[\bagbl{u_{j-1}}{x_j}=\alpha\in A=\labgr{y_j}{u_j} \qquad\text{and}\qquad \bagbl{u_{j-1}}{y_j}=\beta\in B=\lablr{x_j}{u_j}{.}\]
Suppose further that $E_h=\set{\id_Q}$, as we did when discussing Step~1.
Then $x_1,\ldots,x_k$ and $y_1,\ldots,y_k$ form a standard example, because whenever $1\leq i<j\leq k$, we have
\begin{alignat*}{7}
\bagbl{u_i}{x_j}&=\bagbb{u_i}{u_{j-1}}\bagbl{u_{j-1}}{x_j}&&=\id_Q\alpha&&=\alpha&&\in A&&=\labgr{y_i}{u_i} &&\quad\text{and so}\quad &y_i&\geq x_j{,} \\
\bagbl{u_i}{y_j}&=\bagbb{u_i}{u_{j-1}}\bagbl{u_{j-1}}{y_j}&&=\id_Q\beta&&=\beta&&\in B&&=\lablr{x_i}{u_i} &&\quad\text{and so}\quad &x_i&\leq y_j{.}
\end{alignat*}

To be able to argue about existence of specific $h$-crosses between arbitrary pairs of $h$-neighbors, before applying Colcombet's Theorem, we enrich the semigroup and the labeling of $T$ with additional information.
For any two nodes $u\downto v$, let $\lambda(u,v)=(\bagbb{u}{v},\Phi(u,v),\Psi(u,v))$ where
\begin{alignat*}{2}
\Phi(u,v) &= \set[\big]{\bigl(\bagbl{u}{x},\:\lablr{x}{v},\:\labgr{x}{v}\bigr)\colon x\text{ is a descendant of }u\text{ but not }v}{,}\hskip-200pt \\
\Psi(u,v) &= \smash[b]{\bigl\{\bigl(\bagbl{u}{x},\:\bagbl{u}{y},\:\lablr{x}{v},\:\labgr{y}{v}\bigr)}\colon &&x\text{ and }y\text{ are incomparable and} \\[-\jot]
&&&\text{are descendants of }u\text{ but not }v\smash[t]{\bigr\}}{,}
\end{alignat*}
and let $\Lambda$ be the set of all triples $(\rho,\Phi,\Psi)$ of the appropriate type.
It is then possible to provide appropriate formulas for the semigroup operation $\cdot$ on $\Lambda$ so that if $u\downto v\downto w$, then $\lambda(u,w)=\lambda(u,v)\cdot\lambda(v,w)$.
The purpose of $\Psi$ is to encode the existence of $h$-crosses with specific signatures.
The purpose of $\Phi$ is that an incomparable pair witnessing some quadruple in $\Psi(u,w)$ may have one element ``between $u$ and $v$'' and the other ``between $v$ and $w$'', and in that case the information about these two elements is encoded in $\Phi(u,v)$ and $\Phi(v,w)$, respectively.
Note that the size of $\Lambda$ is bounded by a function of $q$.

We apply Colcombet's Theorem to obtain a split $s$ of $T$ which is forward Ramseyan with respect to $\lambda$.
We assume that Step~1 has been performed with this particular split, resulting in an $h$-chain $u_0\downto\cdots\downto u_k$ and an $h$-cross with some signature $(\alpha,\beta,B,A)$ such that $\alpha\in A$ and $\beta\in B$ between some pair of $h$-neighbors $u$ and $v$.
It follows that $(\alpha,\beta,B,A)\in\Psi(u,v)$.
To complete this overview under the simplifying assumption that $E_h=\set{\id_Q}$, observe that for every $j\in\set{1,\ldots,k}$, the property that $s$ is forward Ramseyan with respect to $\lambda$ yields $\lambda(u_{j-1},u_j)\cdot\lambda(u,v)=\lambda(u_{j-1},u_j)$, which together with $\bagbb{u_{j-1}}{u_j}=\id_Q$ implies $\Psi(u,v)\subseteq\Psi(u_{j-1},u_j)$, because this is how the operation $\cdot$ is (and must be) defined.
We conclude that $(\alpha,\beta,B,A)\in\Psi(u_{j-1},u_j)$ for every $j$, which gives us $h$-crosses with signature $(\alpha,\beta,B,A)$ between all consecutive pairs of $h$-neighbors in the $h$-chain $u_0\downto\cdots\downto u_k$.
These $h$-crosses, as we have already discussed, give rise to the standard example $S_k$ as a subposet.

\subsection*{Kelly examples: Theorems \ref{thm:Kelly-dim-bounded-main} and \ref{thm:excluding-a-minor}}

The proof of \Cref{thm:Kelly-dim-bounded-main} relies on the same two-step procedure as the proof of \Cref{thm:Kelly-dim-bounded-main} but applied to an NLC-decomposition of the poset obtained from a tree decomposition of its cover graph via a particular construction.
Thus, $T$ is a binary tree with leaf set $X$ (the ground set of the poset), every node $u$ of $T$ is equipped with a \emph{bag} $B(u)\subseteq X$ of size at most $t$ in a way that forms a tree decomposition of the cover graph, and moreover $x\in B(x)$ for every $x\in X$.
Such a tree decomposition of optimal width always exists.
A key \emph{convexity property} is that whenever $B(u)\ni x\leq y\in B(v)$ and $w$ is a node on the $u$-to-$v$ path in $T$, then there is $z\in B(w)$ such that $x\leq z\leq y$.

We can enumerate the elements of every bag as $B(u)=\set{z^u_1,\ldots,z^u_t}$ (possibly repeating some elements if the bag has size less than $t$).
Then, using the convexity property, we can derive an NLC-decomposition of the poset so that the label $\bagbl{u}{x}$ of $x\in X$ at an ancestor $u$ describes how $x$ ``sees'' the bag of $u$ in the order:
\[\bagbl{u}{x}=\bigl(\set{i\in\set{1,\ldots,t}\colon z^u_i\geq x},\;\set{i\in\set{1,\ldots,t}\colon z^u_i\leq x}\bigr){.}\]
Thus, the label set $Q$ consists of the pairs of subsets of $\set{1,\ldots,t}$; in particular $\size{Q}=4^t$.

When using an $h$-chain $u_0\downto\cdots\downto u_k$ and an $h$-cross to build a standard example on incomparable pairs $x_1,y_1,\ldots,x_k,y_k$ as described in Step~2, we exploit this specific form of the NLC-decomposition to find chains $c_1\leq\cdots\leq c_{k-1}$ and $d_1\geq\cdots\geq d_{k-1}$ with $c_j,d_j\in B(u_j)$ such that whenever $1\leq i<j\leq k$, the order relations between $x_i,y_i$ and $x_j,y_j$ are witnessed as
\[x_i\leq c_i\leq\cdots\leq c_{j-1}\leq y_j\qquad\text{and}\qquad y_i\geq d_i\geq\cdots\geq d_{j-1}\geq x_j{.}\]
This is enough to exhibit $\kelly_k$ as a subposet.

As for \Cref{thm:excluding-a-minor}, the difficult implication is to prove that if a class of graphs $\cgC$ excludes the cover graph of some Kelly example as a minor, then the posets with cover graphs from $\cgC$ have bounded dimension.
Since the cover graph of every Kelly example is planar, the Grid Minor Theorem implies that any class $\cgC$ as above has bounded treewidth.
So from \Cref{thm:Kelly-dim-bounded-main} we infer that posets with cover graphs in $\cgC$ exclude some Kelly example as a subposet.

To complete the argument, we need the following statement: for every $k$, there exists $N$ such that if a poset $P$ contains a $\kelly_N$ as a subposet, then the cover graph of $P$ contains the cover graph of $\kelly_k$ as a minor.
In the proof, we start with a copy of $\kelly_N$ in $P$ for some large $N$, and we investigate paths in the cover graph of $P$ that witness the relations in the copy.
These paths may in principle intersect, but certain intersections can be excluded, because they would imply non-existent relations in $\kelly_N$.
We analyze the structure of the intersections using Ramsey's Theorem to either ``uncross'' the paths and immediately construct a minor model of the cover graph of $\kelly_k$, or expose a bramble of large order in the cover graph of $P$.
The latter conclusion implies that the treewidth of the cover graph is large, which again yields a minor model of $\kelly_k$ by the Grid Minor Theorem.

\subsection*{Boolean dimension: Theorem \ref{thm:bdim-main}}

Finally, we sketch the proof of \Cref{thm:bdim-main}.
By considering an NLC-decomposition as a rooted tree edge-labeled with relabelings $\rho$, the problem can be reduced to proving the following statement.
Suppose we have a rooted tree $T$ whose edges are labeled with elements of some finite semigroup $(\Lambda,\cdot)$.
For $u\downto v$, let $\lambda(u,v)$ denote the product of the edge labels along the $u$-to-$v$ path in $T$.
Let $X$ be the leaf set of $T$.
We want to construct linear orders $\preceq_1,\ldots,\preceq_p$ on $X$, for some $p$ depending only on $\size{\Lambda}$, so that for given $x,y\in X$, from the $p$-tuple of Boolean values $[x\preceq_1y],\ldots,[x\preceq_py]$ one can uniquely determine the values $\lambda(u,x)$ and $\lambda(u,y)$, where $u$ is the lowest common ancestor of $x$ and $y$ in $T$.

To prove this statement, we use Colcombet's Theorem.
To start the discussion, consider the special case when the entire set $\Lambda$ is forward Ramseyan: $e\cdot f=e$ for all $e,f\in\Lambda$.
Then, in the context of a query about $x,y\in X$ as above, we have $\lambda(u,x)=\lambda(u,v_x)$, where $v_x$ is the child of $u$ that is an ancestor of $x$.
Therefore, the problem simplifies significantly: instead of determining the product of all semigroup elements along the $u$-to-$x$ path, we need to determine only the single semigroup element placed at the edge connecting $u$ with the child $v_x$ ``in the direction of $x$''.
For this basic \emph{color detection} problem, one can give a direct construction requiring as few as $\Oh(\log\size{\Lambda})$ linear orders $\preceq_i$.
We remark that such a construction was already observed by Felsner, Mészáros, and Micek \cite[Color Detection]{FMM20}.

Once the special case is understood, we can proceed with the proof of the general case using induction on the split $s$ of order $\size{\Lambda}$ provided by Colcombet's Theorem.
In step $h\in\set{0,\ldots,\size{\Lambda}}$ of the induction, we construct a set of $\Oh((h+1)\log\size{\Lambda})$ linear orders which determine, for any two distinct nodes $x,y\in X$, the value of $\lambda(u,v)$, where $u$ is the lowest common ancestor of $x$ and $y$ in $T$ and $v$ is $x$ or the first node on the $u$-to-$x$ path such that $s(v)>h$.
In particular, when $h=0$, we use $\Oh(\log\size{\Lambda})$ linear orders to determine $\lambda(u,v_x)$, as described for the special case.
When $h\geq 1$, the question about the value of $\lambda(u,v)$ is divided into three questions: about the values of $\lambda(u,u')$, $\lambda(u',u'')$, and $\lambda(u'',v)$, where $u'$ and $u''$ are respectively the first and the last node with value $h$ on the $u$-to-$v$ path in $T$.
The question about $\lambda(u,u')$ is handled in the previous step of the induction, answering the question about $\lambda(u',u'')$ boils down to applying (a more involved variant of) the special case thanks to forward Ramseyanity of the split, and the question about $\lambda(u'',v)$ is easy to answer using a variant of color detection (details omitted).
Altogether, when $h=\size{\Lambda}$, we have $\Oh(\size{\Lambda}\log\size{\Lambda})$ linear orders that determine $\lambda(u,x)$, as requested.

We remark that the mechanism presented above is very close to the treatment of monadic second-order queries on trees in the original work of Colcombet; see \cite[Lemma~3]{Colcombet07}.

\section{Preliminaries}\label{sec:prelims}

All graphs and posets that we consider are finite.
We use standard notation and terminology related to graphs.
We let $[n]=\set{1,\ldots,n}$ for $n\in\setN$.

In the rest of this section, we introduce all important or less standard terminology and notation and some preliminary results that will be used further in the paper---even if they were already introduced in \Cref{sec:overview}.

\subsection*{Relations}

A \emph{relation} on a set $I$ is a subset $\rho\subseteq I\times I$.
The \emph{composition} of relations $\rho,\rho'\subseteq I\times I$ is
\[\rho\rho' = \set[\big]{(i,k)\in I\times I\colon\text{there is }j\in I\text{ such that }(i,j)\in\rho\text{ and }(j,k)\in\rho'}{.}\]
The \emph{image} and the \emph{coimage} of a set $X\subseteq I$ under a relation $\rho\subseteq I\times I$ are, respectively,
\begin{align*}
\rho X &= \set[\big]{i\in I\colon\text{there is }j\in X\text{ such that }(i,j)\in\rho}\quad\text{and} \\
X\rho &= \set[\big]{j\in I\colon\text{there is }i\in X\text{ such that }(i,j)\in\rho}{.}
\end{align*}

A \emph{backward function} $\rho\colon I\from I$ is a relation $\rho\subseteq I\times I$ such that for every $j\in I$, there is a unique $i\in I$ with $(i,j)\in I$.
Note that for backward functions $\rho,\rho'\colon I\from I$, the relation $\rho\rho'$ defined above coincides with the ordinary functional composition of $\rho$ and $\rho'$.
Similarly, for a backward function $\rho\colon I\from I$ and a set $X\subseteq I$, the sets $\rho X$ and $X\rho$ defined above coincide with the ordinary functional image and coimage of $X$ under $\rho$, respectively.
Finally, for a backward function $\rho\colon I\from I$ and an element $j\in I$, $\rho j$ is the application of $\rho$ to $j$, i.e., the unique element $i\in I$ such that $(i,j)\in\rho$.

All operations presented above are associative in all configurations in which they can be applied.
The composition/image/coimage operation on pairs of relations/sets is defined coordinatewise.
For instance, for relations $\rho_1,\rho'_1,\rho_2,\rho'_2$, we define $(\rho_1,\rho_2)(\rho'_1,\rho'_2)=(\rho_1\rho'_1,\rho_2\rho'_2)$.

A \emph{directed graph} is a pair $(V,E)$ such that $E$ is a relation on $V$.

\subsection*{Posets}

A \emph{poset} is a pair $(X,\leq)$ such that $\leq$ is a reflexive, transitive, and antisymmetric relation on $X$.
Whenever $\leq$ is the order relation of a poset, we let $x<y$ denote that $x\leq y$ and $x\neq y$, and we let $x\parallel y$ denote that $x$ and $y$ are incomparable, i.e., $x\nleq y$ and $y\nleq x$.
The \emph{cover graph} of a poset $(X,\leq)$ is the (undirected) graph $(X,E)$ with edge set $E=\{xy\in E\colon x<y$ and there is no $z$ with $x<z<y\}$.

The \emph{dimension} of a poset $(X,\leq)$, denoted by $\dim(X,\leq)$, is the least $d\in\setN$ such that there exist $d$ linear orders $\preceq_1,\ldots,\preceq_d$ on $X$ satisfying
\[{\leq}={\preceq_1}\cap\cdots\cap{\preceq_d}.\]
An \emph{alternating cycle} in $(X,\leq)$ is a tuple of incomparable pairs $(x_1,y_1),\ldots,(x_m,y_m)$ such that $x_i\leq y_{i+1}$ for all $i\in[m]$ and these are the only comparabilities among $x_1,\ldots,x_m,y_1,\ldots,y_m$.
(Here and further on, we follow the convention that when speaking about alternating cycles, indices behave cyclically modulo the length of the cycle.
Thus, $x_m\leq y_1$.)
The following characterization of dimension through alternating cycles is well known; see~\cite{TM77}.

\begin{fact}\label{fc:dim-alt}
The dimension of a poset\/ $(X,\leq)$ is the least\/ $d\in\setN$ such that all (ordered) incomparable pairs in\/ $(X,\leq)$ can be colored with\/ $d$ colors so that there is no monochromatic alternating cycle.
\end{fact}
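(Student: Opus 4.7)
The plan is to establish the equality by proving two inequalities: that $\dim(X,\leq)$ colors always suffice, and that fewer colors cannot avoid a monochromatic alternating cycle. The first direction is a routine conversion of realizers into colorings; the second requires a minimum-cycle argument and is where the bulk of the work lies.

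For the first inequality, let $\preceq_1,\ldots,\preceq_d$ be a realizer with $d=\dim(X,\leq)$. For every ordered incomparable pair $(x,y)$, the equality ${\leq}={\preceq_1}\cap\cdots\cap{\preceq_d}$ forces some index $i$ with $y\prec_i x$; pick one such $i$ and call it $c(x,y)$. Suppose toward a contradiction that $(x_1,y_1),\ldots,(x_m,y_m)$ is a monochromatic alternating cycle of common color $i$. Then $y_j\prec_i x_j$ for every $j\in[m]$, while $x_j\leq y_{j+1}$ in $(X,\leq)$ implies $x_j\preceq_i y_{j+1}$. Chaining these inequalities cyclically around the cycle produces $y_1\prec_i y_1$, a contradiction.

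For the reverse inequality, fix a coloring $c$ with $d$ colors and no monochromatic alternating cycle. For each color $i\in[d]$, consider the digraph $D_i$ on $X$ whose arcs are all pairs $(a,b)$ with $a\leq b$ together with all pairs $(y,x)$ for which $c(x,y)=i$. Provided every $D_i$ is acyclic, any topological linearization yields a linear order $\preceq_i$ that extends $\leq$ and reverses every color-$i$ incomparable pair; then ${\leq}={\preceq_1}\cap\cdots\cap{\preceq_d}$, because any incomparable pair $(x,y)$ is reversed by $\preceq_{c(x,y)}$. To prove acyclicity of $D_i$, I would take a cycle in $D_i$ of minimum length, measured by the number of color-$i$ arcs, and derive a contradiction. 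By transitivity and reflexivity of $\leq$, maximal runs of $\leq$-arcs can be contracted to single arcs, so the cycle takes the form $y_1\to x_1\to y_2\to\cdots\to y_m\to x_m\to y_1$ with color-$i$ arcs $y_j\to x_j$ and $\leq$-arcs $x_j\to y_{j+1}$, giving a candidate alternating cycle $(x_1,y_1),\ldots,(x_m,y_m)$.

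The main obstacle is verifying strictness, i.e., that these are the \emph{only} comparabilities among the involved elements. I would argue by case analysis that any additional comparability $a\leq b$ between elements of $\set{x_1,\ldots,x_m,y_1,\ldots,y_m}$ either contradicts the incomparability of some pair $(x_j,y_j)$ (e.g.\ when $a=y_j$ and $b=x_j$), or combines with a required relation to give a direct $\leq$-shortcut in $D_i$ (e.g.\ $x_j\leq x_k$ together with $x_k\leq y_{k+1}$ yields $x_j\leq y_{k+1}$), producing a cycle in $D_i$ with strictly fewer color-$i$ arcs and contradicting minimality. Hence the minimum cycle is already a monochromatic alternating cycle in the sense of the paper, contradicting the hypothesis on $c$ and completing the proof.
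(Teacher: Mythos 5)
The paper does not prove Fact~\ref{fc:dim-alt}: it is stated as ``well known'' and used directly, so there is no in-paper argument to compare against. Your proof is the standard one and is correct in substance. Both directions are sound: the forward direction records, for each incomparable pair, an index of the realizer that reverses it, and derives a strict loop $y_1\prec_i y_1$ from a monochromatic alternating cycle; the reverse direction topologically sorts the digraphs $D_i$, reducing acyclicity to the minimum-cycle argument. The only spot to tighten is the closing case analysis, since a strict extra comparability $a<b$ need not fall neatly under exactly the two headings you list. For instance, $y_j<x_{j-1}$ combined with the required $x_{j-1}\leq y_j$ immediately yields $y_j<y_j$, a direct antisymmetry violation rather than a shorter cycle or an incomparability violation. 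Also note that coincidences such as $x_{j-1}=y_j$ (arising from two consecutive color-$i$ arcs in the contracted cycle) can occur and are not extra comparabilities at all: the paper's (standard) notion of strict alternating cycle permits $x_j=y_{j+1}$, as the footnote after the overview's definition hints. With these two observations folded into the case analysis, the minimum-cycle argument closes exactly as you intend.
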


The \emph{Boolean dimension} of a poset $(X,\leq)$, denoted by $\bdim(X,\leq)$, is the least $b\in\setN$ such that there exist $b$ linear orders $\preceq_1,\ldots,\preceq_b$ on $X$ and a function $\decode\colon\set{0,1}^b\to\set{0,1}$, called the \emph{decoder}, such that the following is satisfied for every pair of distinct $x,y\in X$:
\[x\leq y \qquad\text{if and only if}\qquad \decode\bigl([x\preceq_1y],\ldots,[x\preceq_by]\bigr)=1.\]
Here, for an assertion $\psi$, $[\psi]$ denotes the value $1$ if $\psi$ holds and $0$ otherwise.
(Note that $\bdim(X,\leq)=0$ in case $\size{X}=1$.)
Clearly, the Boolean dimension of a poset is at most the dimension, because one can take the Boolean conjunction as the decoder.

\subsection*{Rooted trees}

A \emph{rooted tree} with \emph{root} $r$ is a triple $T=(V,E,r)$ such that $(V,E)$ is a tree and $r\in V$.
The elements of $V$ are called the \emph{nodes} of $T$.
The \emph{parent} of a node $u\neq r$ is the unique neighbor of $u$ on the path from $u$ to $r$ in $T$.
The \emph{children} of a node $u\in V$ are the neighbors of $u$ other than the parent of $u$.
The \emph{leaves} of $T$ are the nodes with no children.
The \emph{inner nodes} of $T$ are all the nodes except for leaves and the root.
An order $\downtoeq$ is defined on $V$ so that $u\downtoeq v$ if $u$ lies on the path from $v$ to $r$ in $T$.
When $u\downtoeq v$, we say that $u$ is an \emph{ancestor} of $v$ and $v$ is a \emph{descendant} of $u$ in $T$.
The \emph{lowest common ancestor} of two nodes $v$ and $w$ is the unique $\downtoeq$-maximal node $u$ such that $u\downtoeq v,w$.
By $u\downto v$ we mean that $u\downtoeq v$ and $u\neq v$.
Note that $(V,E)$ is the cover graph of the poset $(V,\downtoeq)$.

A \emph{subtree} of a rooted tree $T=(V,E,r)$ is a rooted tree $S=(V',E',r')$ such that $(V',E')$ is a subtree of $(V,E)$ (i.e., $V'\subseteq V$ and $E'\subseteq E$) and $r'$ is the $\downtoeq$-minimal node in $V'$.
Then the poset $(V',\downtoeq[S])$ is a subposet of $(V,\downtoeq)$.
A subtree $S$ of a rooted tree $T$ is \emph{proper} if for every node $u$ of $S$, either $u$ is a leaf of $S$ (but not necessarily of $T$) or all children of $u$ in $T$ are also nodes of $S$.
A \emph{binary tree} is a rooted tree in which the root and every inner node has exactly two children: the \emph{left child} and the \emph{right child}.
Note that a binary subtree of a binary tree is always proper.
A \emph{left} or \emph{right descendant} of a node $u$ in a binary tree is, respectively, a descendant of the left child or of the right child of $u$.

\subsection*{Splits}

Let $T$ be a rooted tree with inner node set $U$.
A \emph{split} of $T$ of \emph{order} $p\in\setN$ is a mapping $s\colon U\to[p]$.
In the context of a fixed split $s$, an \emph{$h$-node} is a node $u\in U$ with $s(u)=h$, two $h$-nodes $u\downto v$ are \emph{$h$-neighbors} if $s(w)\leq h$ for all nodes $w$ with $u\downto w\downto v$, and an \emph{$h$-subtree} is a maximal subtree of $T$ whose every inner node $u$ satisfies $s(u)\leq h$.
Such subtrees form a recursive partition of $T$, as follows:
\begin{itemize}
\item the unique $p$-subtree of $T$ is $T$ itself;
\item every $h$-subtree $S$ with $h\in[p]$ is partitioned by its $h$-nodes into $(h-1)$-subtrees so that every $h$-node in $S$ is a leaf of one $(h-1)$-subtree and the root of one $(h-1)$-subtree;
\item the $0$-subtrees are the minimal proper subtrees, i.e., the proper subtrees with no inner nodes.
\end{itemize}

Now, let $(\Lambda,\cdot)$ be a semigroup.
A \emph{$(\Lambda,\cdot)$-labeling} of $T$ is an assignment $\lambda$ of a label $\lambda(u,v)\in\Lambda$ to every pair of nodes $u\downto v$ with the property that
\[u\downto v\downto w\qquad\text{implies}\qquad\lambda(u,v)\cdot\lambda(v,w)=\lambda(u,w){.}\]
A split $s\colon U\to[p]$ of $T$ is \emph{forward Ramseyan} with respect to a $(\Lambda,\cdot)$-labeling $\lambda$ of $T$ if for every $h\in[p]$ and all pairs $u\downto v$ and $u'\downto v'$ of $h$-neighbors, we have
\[\lambda(u,v)\cdot\lambda(u',v')=\lambda(u,v){.}\]
We will rely on the following result due to Colcombet~\cite{Colcombet07}; see also \cite[Theorem~5.3]{Colcombet21}.

\begin{colcombet}
If\/ $(\Lambda,\cdot)$ is a finite semigroup, then every rooted tree\/ $T$ equipped with a\/ $(\Lambda,\cdot)$-labeling\/ $\lambda$ has a split of order\/ $\size{\Lambda}$ that is forward Ramseyan with respect to\/ $\lambda$.
\end{colcombet}

We remark that the assertion Colcombet's Theorem in \cite[Theorem~5.3]{Colcombet21} is slightly weaker, but the proof directly implies the statement above.
For the reader's convenience, we provide a self-contained proof of the theorem in \Cref{sec:colcombet}.

\subsection*{NLC-decompositions}

Instead of considering clique expressions and cliquewidth, we will be working with NLC-decompositions and NLC-width.
These notions differ slightly in terms of the set of allowed operations, but they are functionally equivalent.
The definition of NLC-decompositions and NLC-width for directed graphs that we are going to use is due to Gurski, Wanke, and Yilmaz~\cite{GWY16}.
In particular, as observed in \cite[Theorem~4.1]{GWY16}, cliquewidth and NLC-width of a directed graph differ by a multiplicative factor of at most $2$, hence a class of directed graphs has bounded cliquewidth if and only if it has bounded NLC-width.

Let $G=(X,E)$ be a directed graph.
An \emph{NLC-decomposition} of $G$ of \emph{width} $q$ is a sextuple $(T,Q,\eta,\rho,R,R')$ that consists of the following objects:
\begin{itemize}
\item a binary tree $T$ with leaf set $X$;
\item a \emph{label set} $Q$ of size $q$;
\item an \emph{initial label} $\initial{x}\in Q$ assigned to every leaf $x\in X$;
\item a \emph{relabeling} $\bagbb{u}{v}\colon Q\from Q$ defined for any two nodes $u\downto v$ so that
\[u\downto v\downto w\qquad\text{implies}\qquad\bagbb{u}{v}\bagbb{v}{w}=\bagbb{u}{w}{;}\]
by convention, for every node $u$ we set $\bagbb{u}{u}=\id_Q$ (the identity on $Q$);
\item \emph{status relations} $R(u),R'(u)\subseteq Q\times Q$ for every node $u$.
\end{itemize}
We require that the status relations at nodes of $T$ encode the relation $E$ in the following sense.
First, for every node $u$ and every leaf $x\in X$ with $u\downtoeq x$, define the \emph{label} $\bagbl{u}{x}\in Q$ of $x$ at $u$ as
\[\bagbl{u}{x}=\bagbb{u}{x}\initial{x}{.}\]
Then, consider any two distinct leaves $x,y\in X$, and let $u$ be their lowest common ancestor in $T$.
If $x$ is a left descendant and $y$ is a right descendant of $u$ in $T$, then we require that
\begin{align*}(x,y)&\in E\quad\text{if and only if}\quad(\bagbl{u}{x},\bagbl{u}{y})\in R(u){,}\quad\text{and} \\
(y,x)&\in E\quad\text{if and only if}\quad(\bagbl{u}{x},\bagbl{u}{y})\in R'(u){.}
\end{align*}

The \emph{NLC-width} of $G$ is the minimum width of an NLC-decomposition of $G$.
These definitions apply naturally to posets $(X,\leq)$ with the order relation $\leq$ playing the role of $E$.

\subsection*{Tree decompositions and minors}

Let $G=(X,E)$ be a graph.
A \emph{tree decomposition} of $G$ is a pair $(T,B)$ that consists of a rooted tree $T$ and an assignment of a \emph{bag} $B(u)\subseteq X$ to each node $u$ of $T$ which satisfies the following conditions:
\begin{itemize}
\item for every $x\in X$, the nodes $u$ in $T$ such that $x\in B(u)$ induce a nonempty subtree of $T$; and
\item for every edge $xy\in E$, there is a node $u$ of $T$ such that $x,y\in B(u)$.
\end{itemize}
The \emph{width} of a tree decomposition $(T,B)$ is $\max_u\size{B(u)}-1$.
The \emph{treewidth} of $G$ is the minimum width of a tree decomposition of $G$.

In our case, $G$ is always the cover graph of a poset $(X,\leq)$.
The conditions of tree decomposition then imply the following simple property.

\begin{fact}\label{fact:convexity}
Let\/ $u$, $v$, and\/ $w$ be nodes of\/ $T$ such that\/ $w$ lies on the path from\/ $u$ to\/ $v$ in\/ $T$.
If\/ $x\in B(u)$, $y\in B(v)$, and\/ $x\leq y$, then there is\/ $z\in B(w)$ such that\/ $x\leq z\leq y$.
\end{fact}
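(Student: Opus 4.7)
The plan is to reduce the statement to a standard fact about how subtrees of a tree behave under consecutive intersections. First I would pick a saturated chain witnessing $x \leq y$ in the poset, namely a sequence $x = x_0 \lessdot x_1 \lessdot \cdots \lessdot x_k = y$ of cover relations. If $x = y$ we are done by taking $z = x$, so assume $k \geq 1$. For each $i \in \{0,1,\ldots,k\}$, let $T_i$ be the set of nodes $a$ of $T$ with $x_i \in B(a)$; by the first axiom of a tree decomposition each $T_i$ induces a (nonempty) subtree of $T$. By hypothesis $u \in T_0$ and $v \in T_k$. Moreover, for each $i \in [k]$ the pair $x_{i-1}x_i$ is an edge of the cover graph, so the second axiom of a tree decomposition provides some node $w_i$ with $x_{i-1}, x_i \in B(w_i)$, i.e.\ $w_i \in T_{i-1} \cap T_i$.

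The key step is then to observe that $H \coloneqq T_0 \cup T_1 \cup \cdots \cup T_k$, viewed as a subgraph of $T$, is connected: walking through $T_0$ to $w_1$, then through $T_1$ to $w_2$, and so on until reaching $v$ in $T_k$ yields a walk in $H$ from $u$ to $v$. Because $T$ is a tree, the unique $u$-to-$v$ path in $T$ is contained in any connected subgraph linking $u$ and $v$, so every node $w$ on this path lies in $H$. Hence $w \in T_i$ for some $i$, meaning $x_i \in B(w)$, and setting $z = x_i$ gives $x = x_0 \leq x_i \leq x_k = y$ as required.

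There is no genuine obstacle here; the only subtlety worth stressing is that the tree-decomposition axioms are used in two distinct ways: the ``subtree'' axiom ensures each $T_i$ is connected, while the ``edge'' axiom propagates along the chain by guaranteeing consecutive $T_i$'s share a node. The argument would fail if one replaced the cover graph by, say, the comparability graph of $P$ with only some edges retained, so the appeal to a \emph{saturated} chain (all of whose links are cover edges) is essential.
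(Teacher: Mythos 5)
The paper states \Cref{fact:convexity} without proof, treating it as a simple consequence of the tree-decomposition axioms, so there is nothing in the paper to compare your proof against; but the argument you give is exactly the standard one that would be expected and it is correct. You take a saturated chain from $x$ to $y$, observe that the bags containing each $x_i$ form subtrees that pairwise overlap along consecutive indices (via the edge axiom applied to cover edges), conclude that the union is connected and therefore contains the unique $u$--$v$ path in $T$, and read off the desired $z$. The one spot worth polishing: in the base case $x=y$ you write ``we are done by taking $z=x$,'' which implicitly uses the same subtree-connectivity argument to see that $x\in B(w)$ since $u,v$ are both in the subtree of bags containing $x$; this is handled automatically if you simply let $k\geq 0$ and run the general argument without special-casing, since for $k=0$ the union $H=T_0$ already contains $u$ and $v$. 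Your closing remark correctly identifies the genuine content, namely that one must use a \emph{saturated} chain so that every link $x_{i-1}x_i$ is an edge of the cover graph.
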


A bound on the treewidth of the cover graph of a poset implies a bound on the NLC-width of the poset itself.
This statement alone follows easily from the standard facts that classes of binary structures of bounded cliquewidth are closed under taking monadic second-order interpretations (see e.g.\ \cite[Theorem~1.39 and Corollary~1.43]{CE12} for a proof in the context of undirected graphs), that cliquewidth is bounded in terms of treewidth, and that obtaining the transitive closure can be expressed as a monadic second-order interpretation.
In \Cref{sec:kelly}, we use an explicit way of obtaining an NLC-decomposition of a poset from a tree decomposition of its cover graph, which provides the following direct bound.

\begin{fact}\label{fact:tw-NLC}
A poset with cover graph of treewidth\/ $t-1$ has NLC-width at most\/ $4^t$.
\end{fact}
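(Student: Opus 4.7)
The plan is to construct an NLC-decomposition of $(X,\leq)$ directly from a suitable tree decomposition of its cover graph, following the scheme sketched in the Kelly-examples overview. Starting from a tree decomposition of the cover graph of width $t-1$, I first convert it into a \emph{nice} tree decomposition $(T,B)$ in which $T$ is a binary tree with leaf set $X$, every bag has at most $t$ elements, and $x\in B(x)$ for every $x\in X$. This is a standard transformation: attach each $x\in X$ as a new leaf with bag $\{x\}$ under some existing node whose bag contains $x$, and then enforce the binary shape by subdividing higher-arity branching using auxiliary nodes that copy their parent's bag.

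Enumerate each bag as $B(u)=\{z_1^u,\ldots,z_t^u\}$, repeating elements if $|B(u)|<t$, and take the label set $Q$ to consist of all pairs of subsets of $[t]$, so $|Q|=4^t$. For a leaf $x$ and an ancestor $u$, the intended label is
\[\bagbl{u}{x}=\bigl(\{i\in[t]:z_i^u\geq x\},\ \{i\in[t]:z_i^u\leq x\}\bigr),\]
and the initial label is $\initial{x}=\bagbl{x}{x}$, which is well-defined because $x\in B(x)$. For $u$ the parent of $v$, I set
\[\bagbb{u}{v}(A,B)=\bigl(\{i\in[t]:\exists j\in A,\ z_j^v\leq z_i^u\},\ \{i\in[t]:\exists j\in B,\ z_j^v\geq z_i^u\}\bigr),\]
which depends only on the bags at $u$ and $v$ and is therefore a well-defined backward function on $Q$. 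By \Cref{fact:convexity} applied to $x\in B(x)$, $z_i^u\in B(u)$, and the intermediate node $v$, one checks $\bagbb{u}{v}(\bagbl{v}{x})=\bagbl{u}{x}$ for every leaf $x$ descending from $v$; extending to arbitrary ancestor pairs by composition yields a consistent family of relabelings.

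I take the status relations at every node $u$ uniformly to be
\[R(u)=\bigl\{\bigl((A,B),(A',B')\bigr):A\cap B'\neq\emptyset\bigr\}\quad\text{and}\quad R'(u)=\bigl\{\bigl((A,B),(A',B')\bigr):A'\cap B\neq\emptyset\bigr\}.\]
For distinct leaves $x,y$ with lowest common ancestor $u$, \Cref{fact:convexity} applied to the $x$-to-$y$ path through $u$ gives $x\leq y$ iff some $z_i^u$ lies between $x$ and $y$, iff the first component of $\bagbl{u}{x}$ meets the second component of $\bagbl{u}{y}$. Matching this against the NLC encoding rule in both the left/right and right/left configurations of $x$ and $y$ confirms that $R$ and $R'$ correctly encode the order relation, completing the construction of an NLC-decomposition of width $4^t$.

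The main obstacle is organizational rather than conceptual: arranging the nice tree decomposition without inflating the width, and carefully tracking the two label coordinates through the relabelings and the dual treatment of $R$ versus $R'$. All the essential work is done by the convexity property \Cref{fact:convexity}, which simultaneously underlies correctness of the relabelings and of the status relations.
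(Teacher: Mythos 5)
Your construction is exactly the one the paper uses in \Cref{sec:kelly}: the same adjusted binary tree decomposition (via \Cref{fact:adjust}), the same label set $Q=2^{[t]}\times 2^{[t]}$, the same labels $\bagbl{u}{x}$ encoding how $x$ sits below and above $B(u)$, the same relational relabelings acting by coordinatewise image, and the same status relations detecting a nonempty intersection; correctness in both cases rests on \Cref{fact:convexity}. The only cosmetic difference is your slightly different (but equally routine) way of building the normalized tree decomposition, where the paper contracts to the set of highest occurrences $\phi(X)$ and then binarizes, while you attach singleton leaf bags and binarize with bag-copying auxiliary nodes.
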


A graph $H$ is a \emph{minor} of a graph $G$ if $H$ can be obtained from a subgraph of $G$ by contracting edges.
A \emph{model} of $H$ in $G$ is a collection $\mathcal{M}$ of vertex-disjoint connected subgraphs $M_x$ of $G$, one for each vertex $x$ of $H$, such that $M_x$ and $M_y$ are linked by an edge in $G$ for every edge $xy$ of $H$.
Note that $H$ is a minor of $G$ if and only if there is a model of $H$ in $G$.

\section{Dimension and NLC-width}\label{sec:dim-bnd}

In this section, we prove the following result, which directly implies \Cref{thm:dim-bounded-main}.

\begin{theorem}\label{thm:dim-bounded}
For every pair of integers\/ $q\geq 1$ and\/ $k\geq 2$, there is a constant\/ $d\in\setN$ such that every poset with NLC-width\/ $q$ and dimension greater than\/ $d$ contains the standard example\/ $S_k$ as a subposet.
\end{theorem}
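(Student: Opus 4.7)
The plan is to combine the alternating-cycle characterization of dimension (Fact~\ref{fc:dim-alt}) with Colcombet's Theorem applied to a carefully enriched semigroup labeling of an NLC-decomposition of the input poset $(X,\le)$. Fix such a decomposition $(T,Q,\eta,\rho,R,R')$ with $\size{Q}=q$, and to every pair of nodes $u\downto v$ in $T$ I would associate the triple $\lambda(u,v)=(\bagbb{u}{v},\Phi(u,v),\Psi(u,v))$, where $\Phi(u,v)$ collects the profiles $(\bagbl{u}{x},\lablr{x}{v},\labgr{x}{v})$ of descendants $x$ of $u$ that are not descendants of $v$, and $\Psi(u,v)$ collects the cross signatures $(\bagbl{u}{x},\bagbl{u}{y},\lablr{x}{v},\labgr{y}{v})$ of incomparable pairs of such descendants. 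One checks directly that $\lambda$ is a $(\Lambda,\cdot)$-labeling for a semigroup $\Lambda$ of size bounded by a function of $q$, with the product on $\Psi$ defined so as to also account for pairs $(x,y)$ where the two elements lie on opposite sides of a split $u\downto v\downto w$ (using the $\Phi$ components of the two halves). Colcombet's Theorem then furnishes a forward Ramseyan split $s\colon U\to[p]$ with $p$ bounded in terms of $q$.

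With $s$ in hand, I would proceed by induction on $h\in\set{0,\ldots,p}$, producing for every $h$-subtree $S$ a coloring of its enclosed incomparable pairs that makes no alternating cycle of $S$ monochromatic, using a number of colors $f(h)$ depending only on $q$ and $k$. The base case $h=0$ is straightforward: an $h$-subtree has only two leaves, and comparabilities between their descendants are read off directly from the status relations, so $\Oh(q)$ colors suffice. For the inductive step, let $\cgF$ be the family of factors of $S$ at level $h-1$, together with their inductive colorings, and distinguish two cases. \textbf{Case A:} no incomparable pair is enclosed in more than $k+1$ members of $\cgF$. The tower coloring, recording for each pair the tuple of its inductive colors in all enclosing factors (ordered top-down), uses at most $f(h-1)^{k+1}$ colors and defeats every alternating cycle, since the lowest factor enclosing a cycle yields that cycle as an alternating cycle of a factor. \textbf{Case B:} there exists an $h$-chain $u_0\downto\cdots\downto u_k$ of mutual $h$-neighbors each enclosing a common incomparable pair. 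Here I would use a more delicate coloring that combines the inductive colors of $(x,y)$ at the topmost and the lowest enclosing factors with the labels $\bagbl{v}{x},\bagbl{v}{y}$ at the root $v$ of the lowest enclosing factor, plus a small amount of extra bookkeeping to handle nontrivial $E_h$ and longer alternating cycles. If this coloring fails, a case analysis on where the elements of an offending cycle sit relative to $v$ forces the existence of an $h$-cross with some signature $(\alpha,\beta,B,A)$.

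The final payoff of the enriched semigroup is then to assemble $S_k$ from the $h$-chain of length $k$ and the $h$-cross. Placing the signature $(\alpha,\beta,B,A)$ into $\Psi(u,v)$ at the pair of $h$-neighbors supporting the cross and applying forward Ramseyanity yields $\lambda(u_{j-1},u_j)\cdot\lambda(u,v)=\lambda(u_{j-1},u_j)$ for every $j\in[k]$; the way the product on $\Psi$ is defined then forces $(\alpha,\beta,B,A)\in\Psi(u_{j-1},u_j)$, producing incomparable witnesses $(x_j,y_j)$ between consecutive chain nodes. Using the absorption of relabelings along the chain, one verifies that for all $i<j$ one has $y_i\ge x_j$ and $x_i\le y_j$ with no further comparabilities among the $x$'s and $y$'s, exhibiting $S_k$ as a subposet.

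The main obstacle I anticipate is the delicate coloring of Case~B. It must be calibrated simultaneously so that (i) when $E_h$ is nontrivial, the relabelings in $E_h$ only preserve a fixed partition of $Q$ rather than individual labels, and the notion of $h$-cross and the stored data must track block identities rather than raw labels; (ii) the coloring defeats alternating cycles of arbitrary length, not only of length $2$ as in the illustrative warm-up; and (iii) the signature of the $h$-cross must be exactly the one that the product on $\Lambda$ propagates down the chain via forward Ramseyanity. Reconciling these three constraints is where the bulk of the technical work will lie; once they are aligned, the two-step coloring argument and Colcombet's Theorem mesh to yield a bound $d$ on the dimension depending only on $q$ and $k$.
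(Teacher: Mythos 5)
Your proposal takes essentially the same route as the paper: the same $(\rho,\Phi,\Psi)$-enriched semigroup labeling, Colcombet's Theorem to obtain a forward Ramseyan split, an induction over $h$-subtrees with the "tower" coloring when factors stack to depth at most $k$ and a finer coloring extracting an $h$-cross otherwise, and then propagation of the cross signature along an $h$-chain via forward Ramseyanity to build $S_k$. The obstacles you flag (handling nontrivial $E_h$ via $E_h$-classes, defeating arbitrary-length alternating cycles, and matching the cross signature to what the $\Psi$-product propagates) are exactly the technical issues the paper's Lemma~\ref{dimension} and Lemma~\ref{h-cross} resolve, so this is the same argument at sketch level rather than a different one.
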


We fix integers $q$ and $k$, a poset $(X,\leq)$, and an NLC-decomposition $(T,Q,\eta,\rho,R,R')$ of $(X,\leq)$ of width $q$.
Whenever speaking of comparability or incomparability, we mean the poset $(X,\leq)$.
We assume that $k\geq 3$ (which we can, as $S_2$ is a subposet of $S_3$) and $\size{X}\geq 2$.

For every node $u$ of $T$, we let
\[X(u)=\set{x\in X\colon u\downtoeq x}{.}\]
Observe that $u\downtoeq v$ entails $X(u)\supseteq X(v)$.
For every node $v$ of $T$ and every $x\in X\setminus X(v)$, we define subsets $\lablr{x}{v}$ and $\labgr{x}{v}$ of $Q$ as follows.
Let $u$ be the lowest common ancestor of $x$ and $v$ in $T$.
If $x$ is a left descendant and $v$ is a right descendant of $u$, then we define
\begin{align*}
\lablr{x}{v} &= \set[\big]{\gamma\in Q\colon(\bagbl{u}{x},\bagbb{u}{v}\gamma)\in R(u)}{,} \\
\labgr{x}{v} &= \set[\big]{\gamma\in Q\colon(\bagbl{u}{x},\bagbb{u}{v}\gamma)\in R'(u)}{.}
\end{align*}
Otherwise (i.e., if $x$ is a right descendant and $v$ is a left descendant of $u$), we define
\begin{align*}
\lablr{x}{v} &= \set[\big]{\gamma\in Q\colon(\bagbb{u}{v}\gamma,\bagbl{u}{x})\in R'(u)}{,} \\
\labgr{x}{v} &= \set[\big]{\gamma\in Q\colon(\bagbb{u}{v}\gamma,\bagbl{u}{x})\in R(u)}{.}
\end{align*}
These sets govern the interaction between $X(v)$ and $X\setminus X(v)$ in the following sense.

\begin{fact}\label{fact:L-sets}
Let\/ $v$ be a node of\/ $T$, let\/ $x\in X\setminus X(v)$, and let\/ $y\in X(v)$.
Then we have
\begin{align*}
x\leq y&\qquad\text{if and only if}\qquad\bagbl{v}{y}\in\lablr{x}{v}\text{,} \\
x\geq y&\qquad\text{if and only if}\qquad\bagbl{v}{y}\in\labgr{x}{v}\text{.}
\end{align*}
\end{fact}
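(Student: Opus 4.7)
The plan is to unfold the definitions and reduce each of the two claimed equivalences to the defining equation of the NLC-decomposition at the lowest common ancestor $u$ of $x$ and $v$.

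First I verify that $u$ is also the lowest common ancestor of $x$ and $y$ in $T$. On the one hand, since $v\downtoeq y$, the node $u$ is a common ancestor of $x$ and $y$. On the other hand, if $u'$ is any common ancestor of $x$ and $y$, then $u'$ and $v$ both lie on the unique root-to-$y$ path in $T$ and are therefore comparable in $\downtoeq$; the alternative $v\downtoeq u'\downtoeq x$ would give $x\in X(v)$, contradicting the hypothesis, hence $u'\downtoeq v$. Consequently $u'$ is a common ancestor of $x$ and $v$, which yields $u'\downtoeq u$ as desired.

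Second, the multiplicativity of the relabelings together with $u\downto v\downtoeq y$ gives
\[\bagbl{u}{y}=\bagbb{u}{y}\,\initial{y}=\bagbb{u}{v}\,\bagbb{v}{y}\,\initial{y}=\bagbb{u}{v}\,\bagbl{v}{y}.\]
Third, I split into two cases according to which subtree of $u$ contains $x$. Suppose $x$ is a left descendant and $v$ is a right descendant of $u$; then $y$ is a right descendant of $u$ as well. The encoding condition of the NLC-decomposition at $u$ gives $x\leq y$ iff $(\bagbl{u}{x},\bagbl{u}{y})\in R(u)$; applying the second clause of the encoding condition with the roles of $x$ and $y$ swapped (now ``$y$ is right and $x$ is left'') gives $y\leq x$ iff $(\bagbl{u}{x},\bagbl{u}{y})\in R'(u)$. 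Substituting $\bagbl{u}{y}=\bagbb{u}{v}\,\bagbl{v}{y}$ and matching against the ``left/right'' clauses in the definitions of $\lablr{x}{v}$ and $\labgr{x}{v}$ settles both equivalences. The symmetric case, where $x$ is a right descendant and $v$ is a left descendant of $u$, is handled by exactly the same calculation but with the roles of $R(u)$ and $R'(u)$ exchanged, which lines up with the ``otherwise'' clauses in the definitions of $\lablr{x}{v}$ and $\labgr{x}{v}$.

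The only real obstacle is notational bookkeeping: each of the four combinations (direction of comparison) $\times$ (side of $u$ containing $x$) picks out a specific one of $R(u),R'(u)$ and a specific ordering of the two labels, and one must verify that these choices exactly reproduce the case distinction made in the definitions of $\lablr{x}{v}$ and $\labgr{x}{v}$. There is no conceptual content beyond this verification, since the LCA identification and the multiplicativity of $\rho$ immediately turn the NLC-encoding rule at $u$ into the asserted equivalences.
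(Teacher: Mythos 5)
Your proof is correct and follows essentially the same route as the paper's: identify the lowest common ancestor $u$ of $x$ and $v$ (which the paper notes without proof is also the LCA of $x$ and $y$, a point you verify carefully), rewrite $\bagbl{u}{y}$ as $\bagbb{u}{v}\bagbl{v}{y}$, and match the NLC-encoding condition at $u$ against the case-split definitions of $\lablr{x}{v}$ and $\labgr{x}{v}$. You simply spell out more of the bookkeeping that the paper compresses into ``the other cases are analogous.''
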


\begin{proof}
Let $u$ be the lowest common ancestor of $x$ and $v$ (and thus of $x$ and $y$) in $T$.
Suppose that $x$ is a left descendant and $v$ is a right descendant of $u$.
By the definition of NLC-decomposition, we have $x\leq y$ if and only if $(\bagbl{u}{x},\bagbl{u}{y})\in R(u)$, which holds if and only if $\bagbl{v}{y}\in\lablr{x}{v}$, as $(\bagbl{u}{x},\bagbb{u}{v}\bagbl{v}{y})=(\bagbl{u}{x},\bagbl{u}{y})$.
The other cases are analogous.
\end{proof}

Let $U$ be the set of inner nodes of $T$.
A split $s\colon U\to[p]$ of $T$, where $p\in\setN$ is the order of~$s$, shall be called \emph{decent} if it is forward Ramseyan with respect to the labeling of $T$ with the relabeling function $\rho$.

We now introduce some terminology around decent splits.
Hence, fix for now some decent split $s\colon U\to[p]$.
In this context, for every $h\in[p]$, we define
\[E_h=\set{\bagbb{u}{v}\colon u\downto v\text{ are }h\text{-neighbors}}{.}\]
Since $s$ is decent, we have
\[\sigma\sigma'=\sigma\qquad\text{for all }\sigma,\sigma'\in E_h{.}\]
An \emph{$E_h$-class} is an equivalence class of the equivalence relation $\equiv_h$ on $Q$ defined by
\[\alpha\equiv_h\beta \qquad\text{if and only if}\qquad \sigma\alpha=\sigma\beta\text{ for all }\sigma\in E_h.\]
The following simple statement is a convenient tool for working with $E_h$-classes.

\begin{fact}\label{fact:E_h}
If\/ $C$ is an\/ $E_h$-class, $\gamma\in C$, and\/ $\tau\in E_h$, then\/ $\tau\gamma\in C$.
\end{fact}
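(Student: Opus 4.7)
The plan is to show that $\tau\gamma \equiv_h \gamma$ directly from the definitions, relying on the key absorption property of $E_h$ that was stated just above the fact, namely that $\sigma\sigma' = \sigma$ for all $\sigma, \sigma' \in E_h$. This property is what makes everything work: it reduces the claim to a one-line associativity computation.

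Concretely, to prove $\tau\gamma \in C$, I would unpack the definition of $E_h$-class and aim to verify that $\sigma(\tau\gamma) = \sigma\gamma$ for every $\sigma \in E_h$, since this is exactly the condition $\tau\gamma \equiv_h \gamma$. Given an arbitrary $\sigma \in E_h$, I would use associativity of the composition of backward functions (which was noted in the Preliminaries) to rewrite $\sigma(\tau\gamma) = (\sigma\tau)\gamma$. Now $\sigma,\tau \in E_h$, so the absorption identity gives $\sigma\tau = \sigma$, hence $(\sigma\tau)\gamma = \sigma\gamma$, as required.

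There is essentially no obstacle here; the fact is a direct corollary of the absorption identity, which in turn is the forward Ramseyan property of the decent split applied to pairs of $h$-neighbors. The only thing to be mindful of is the notational convention: $\sigma\tau$ here denotes the composition of backward functions in the order dictated by the semigroup $(\Lambda,\cdot)$ used for Colcombet's Theorem, and this must be consistent with how relabelings compose along ancestor chains in the NLC-decomposition. Once that consistency is verified (and it is, by the conventions fixed in the Preliminaries), the proof is a three-line calculation.
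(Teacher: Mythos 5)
Your proof is correct and is essentially identical to the paper's: both verify $\sigma(\tau\gamma)=(\sigma\tau)\gamma=\sigma\gamma$ for all $\sigma\in E_h$ using associativity and the absorption identity, then conclude $\tau\gamma\equiv_h\gamma$.
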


\begin{proof}
We have $\sigma(\tau\gamma)=(\sigma\tau)\gamma=\sigma\gamma$ for all $\sigma\in E_h$.
Thus $\tau\gamma\equiv_h\gamma$, so $\tau\gamma\in C$.
\end{proof}

For $h\in[p]$, an \emph{$h$-chain} of \emph{length} $\ell$ is a sequence of $\ell+1$ mutual $h$-neighbors $u_0\downto\cdots\downto u_\ell$.
An \emph{$h$-cross} is a quintuple $(u,v,x,y,\sigma)$ such that
\begin{itemize}
\item $u\downto v$ are $h$-neighbors;
\item $x,y\in X(u)\setminus X(v)$ are incomparable; and
\item $\sigma\in E_h$, $\sigma\bagbl{u}{x}\in\labgr{y}{v}$, and $\sigma\bagbl{u}{y}\in\lablr{x}{v}$.
\end{itemize}
Note that again, these objects are defined in the context of a decent split $s$.
The next lemma is a crucial step in the proof: if $(X,\leq)$ has large dimension, then this forces the existence of a long $h$-chain and an $h$-cross, for some $h\in[p]$.

\begin{lemma}\label{dimension}
Let\/ $s$ be a decent split of order\/ $p$.
Then for every\/ $\ell\in\setN$, there exists a constant\/ $d\in\setN$, depending only on\/ $\ell$, $p$, and\/ $q$, such that at least one of the following conditions holds:
\begin{itemize}
\item $\dim(X,\leq)\leq d$; or
\item for some\/ $h\in[p]$, there exist an\/ $h$-chain of length\/ $\ell$ and an\/ $h$-cross.
\end{itemize}
\end{lemma}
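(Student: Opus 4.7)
The plan is to use Fact~\ref{fc:dim-alt} and construct, by induction along the levels of the split $s$ from $h = 0$ to $h = p$, colorings $c_S$ of the incomparable pairs enclosed in each $h$-subtree $S$ that avoid monochromatic alternating cycles. More precisely, for each $h$-subtree $S$ with root $r_S$, I aim at using at most $d_h$ colors so that no \emph{alternating cycle of $S$} --- meaning one whose elements all lie in $X(r_S)$ but are not all under a single leaf of $S$ --- is monochromatic. Applied to $h = p$ with $S = T$ (the only $p$-subtree), this yields $\dim(X, \leq) \leq d_p$ because any alternating cycle in $(X, \leq)$ qualifies as an alternating cycle of $T$.

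The base case $h = 0$ is simple: a $0$-subtree $S$ has a root $u$ with leaf-children $v, w$, so I use one color for pairs inside $X(v)$, one for pairs inside $X(w)$, and color each cross pair $(x, y) \in X(v) \times X(w)$ by $\bagbl{u}{x}$, symmetrically for the opposite direction. Fact~\ref{fact:L-sets} rules out an alternating cycle containing two monochromatic cross pairs in the same direction, as such pairs would force a single element to have equal comparability statuses to two distinct witnesses. Hence $d_0 \leq 2q + 2$.

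For the inductive step $h \geq 1$, let $\cgF$ be the family of $(h-1)$-subtrees partitioning $S$, with $S_{\mathrm{top}} \in \cgF$ rooted at $r_S$, and for each incomparable pair $(x, y)$ enclosed in $S$ let $\bar S(x, y) \in \cgF$ be the lowest member enclosing it. If $S$ contains no $h$-chain of length $\ell$, then each incomparable pair lies in at most $\ell + 1$ members of $\cgF$ (the roots of any $\ell + 2$ of them other than $S_{\mathrm{top}}$ would yield an $h$-chain of length $\ell$), and I use \emph{Strategy A}: color $(x, y)$ by the tuple of inductive colors $c_{S'}(x, y)$ over all enclosing $S' \in \cgF$, giving at most $d_{h-1}^{\ell+1}$ colors. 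A monochromatic alternating cycle of $S$ is then also a monochromatic alternating cycle of its lowest common enclosing member in $\cgF$, contradicting the induction hypothesis. Otherwise, I use \emph{Strategy B}: color $(x, y)$ by the quadruple
\[
\bigl(c_{S_{\mathrm{top}}}(x, y),\; c_{\bar S(x, y)}(x, y),\; \bagbl{r_{\bar S(x, y)}}{x},\; \bagbl{r_{\bar S(x, y)}}{y}\bigr),
\]
totalling at most $d_{h-1}^2 q^2$ colors.

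The heart of the argument --- and the main obstacle --- is showing that a monochromatic alternating cycle under Strategy~B must produce an $h$-cross. Let $(x_1, y_1), \ldots, (x_m, y_m)$ be such a cycle, let $\bar S \in \cgF$ be its lowest common enclosing member, and write $u := r_{\bar S}$. Agreement of the first component forces $\bar S \neq S_{\mathrm{top}}$ (otherwise the cycle would be a monochromatic alternating cycle of $S_{\mathrm{top}}$), and agreement of the second forces $\bar S(x_i, y_i) \neq \bar S$ for some $i$; WLOG $i = 1$, so $x_1, y_1 \in X(v)$ for $v := r_{\bar S(x_1, y_1)}$, a leaf of $\bar S$ that is an $h$-node. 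In particular $u, v$ are $h$-neighbors, so $\sigma := \bagbb{u}{v} \in E_h$. Equality of the last two label components, combined with Fact~\ref{fact:E_h} and the forward Ramseyan absorption $\sigma \bagbb{u}{r_{\bar S(x_i, y_i)}} = \sigma$, ensures that $\sigma\bagbl{u}{x_i}$ and $\sigma\bagbl{u}{y_i}$ take constant values $\sigma\alpha$ and $\sigma\beta$ across $i$, where $\alpha, \beta$ are the common label components of the color. A case analysis based on Fact~\ref{fact:L-sets} --- two cycle pairs whose labels match cannot witness conflicting comparabilities to a common external element through $v$ --- then yields a pair $(x_j, y_j)$ of the cycle with $\bar S(x_j, y_j) = \bar S$ and $x_j, y_j \in X(u) \setminus X(v)$. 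Translating the cyclic comparabilities between $(x_1, y_1) \subset X(v)$ and $(x_j, y_j)$ through Fact~\ref{fact:L-sets}, and using that the sets $\labgr{y_j}{v}$ and $\lablr{x_j}{v}$ are stable under composition with $\sigma$ by forward Ramseyanity, yields $\sigma\bagbl{u}{x_j} \in \labgr{y_j}{v}$ and $\sigma\bagbl{u}{y_j} \in \lablr{x_j}{v}$. Hence $(u, v, x_j, y_j, \sigma)$ is an $h$-cross, which together with the assumed $h$-chain of length $\ell$ in $S$ gives the second alternative. The recurrence $d_h \leq \max(d_{h-1}^{\ell+1}, d_{h-1}^2 q^2)$ with $d_0 \leq 2q + 2$ yields a bound depending only on $\ell$, $p$, and $q$.
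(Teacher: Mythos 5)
Your overall inductive scheme and Strategy A match the paper's proof, and the base case and the observation that ``no $h$-chain of length $\ell$'' bounds the nesting depth to at most $\ell+1$ are correct. The gap is in Strategy B, which is essentially the overview's simplified sketch: the paper explicitly flags that the simplified version is only analyzed for alternating cycles of length $2$ under the assumption $E_h=\set{\id_Q}$, and the actual proof needs considerably more machinery.

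Concretely, your case analysis asserts that a monochromatic cycle under Strategy~B always contains a pair $(x_j,y_j)$ with $\bar S(x_j,y_j)=\bar S$, i.e.\ a pair split across two clusters of $\bar S$. This need not hold: a cycle of length $m\geq 3$ can have every pair $(x_i,y_i)$ contained inside a single cluster $X(v_i)$ of $\bar S$, with distinct $v_i$'s, so that its comparabilities $x_i\leq y_{i+1}$ are the only cross-cluster links. Such a cycle can still be an alternating cycle of $S^*$ whose lowest common enclosing member of $\cgF$ is $\bar S$, and for it your four color components can all agree even though the roots $r_{\bar S(x_i,y_i)}$ sit at different depths below $u=r_{\bar S}$. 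Your coloring records no depth information, so it cannot separate such a cycle. This is precisely what the paper's level-mod-$3$ component $j'$, the inner color at the ``one step shallower'' subtree $S'$, and the additional label $\bagbl{r'}{x}$ are for. Likewise, your statement that $\labgr{y_j}{v}$ and $\lablr{x_j}{v}$ ``are stable under composition with $\sigma$ by forward Ramseyanity'' is unsupported: these sets are not in general closed under right-application of $\sigma\in E_h$. The paper instead produces an $h$-cross by chaining through \emph{three} $h$-nodes $\bar r\downto u\downto v$ (Claim~\ref{claim2}): the cross is between $\bar r$ and $u$, while $\sigma=\bagbb{u}{v}$ is read off from the \emph{next} $h$-step below $u$, and the identities $\sigma\bagbl{\bar r}{x_{i-1}}=\sigma\bagbl{v}{x_i}=\bagbl{u}{x_i}$ (which use \Cref{fact:E_h} and the $E_h$-class structure of $A$, not stability of the $L$-sets) are what make the membership land in $\labgr{y_{i+1}}{u}$. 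Your sketch does not reproduce this trick, and the paper's inner/outer distinction and Claim~\ref{claim3}'s depth bound on neighboring pairs are also essential in the subsequent case analysis; without them, Strategy~B is not proper.
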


\begin{proof}
We start by introducing some more terminology.
Let $S$ be a binary subtree of $T$.
A \emph{cluster} of $S$ is a set of the form $X(v)$ for some leaf $v$ of~$S$.
The clusters of $S$ form a partition of $X(r)$ where $r$ is the root of~$S$.
An \emph{inner incomparable pair} of $S$ is an incomparable pair $(x,y)$ such that $x$ and $y$ lie in the same cluster of~$S$.
An \emph{outer incomparable pair} of $S$ is an incomparable pair $(x,y)$ such that $x$ and $y$ lie in different clusters of~$S$.
An \emph{inner alternating cycle} of $S$ is a sequence $(x_1,y_1),\ldots,(x_m,y_m)$ of inner incomparable pairs of $S$ such that $x_i\leq y_{i+1}$ for $i\in[m]$, other pairs of $x_1,y_1,\ldots,x_m,y_m$ are incomparable, and $x_1,y_1,\ldots,x_m,y_m$ do not all lie in the same cluster.\footnote{The reader might wonder why we exclude the case where $x_1,y_1,\ldots,x_m,y_m$ are all in the same cluster. As will become apparent in the proof, it turns out that $S$ is not the right ``place'' to consider the latter alternating cycles.}
An \emph{outer alternating cycle} of $S$ is a sequence $(x_1,y_1),\ldots,(x_m,y_m)$ of outer incomparable pairs of $S$ such that for each $i\in[m]$, we have $x_i\leq y_{i+1}$ or $x_i$ and $y_{i+1}$ lie in the same cluster of $S$ (in particular, we allow $x_i=y_{i+1}$), and other pairs of $x_1,y_1,\ldots,x_m,y_m$ are incomparable.
(Let us point out that an outer alternating cycle of $S$ is not necessarily an alternating cycle of the poset in the usual sense, since there is no condition on how $x_i$ and $y_{i+1}$ compare when they are in the same cluster of $S$.)
For $\type\in\{$inner$,{}$outer$\}$, a \emph{proper coloring} of the $\type$ incomparable pairs of $S$ is a coloring that makes no $\type$ alternating cycle of $S$ monochromatic.

We prove the following by induction on $h$: for every $h\in\set{0,1,\ldots,p}$, there are constants $d^\inner_h$ and $d^\outer_h$ such that at least one of the following two conditions holds:
\begin{enumerate}
\item\label{o:coloring} for every $h$-subtree $S$ and every $\type\in\{$inner$,{}$outer$\}$, there is a proper $d^\type_h$-coloring of the $\type$ incomparable pairs of $S$; or
\item\label{o:chain} for some $h'\in[h]$, there exist an $h'$-chain of length $\ell$ and an $h'$-cross.
\end{enumerate}
The lemma then follows by taking $d=d^\outer_p$, as, by \Cref{fc:dim-alt}, $\dim(X,\leq)$ is the minimum number of colors in a proper coloring of the (outer) incomparable pairs of $T$.

We settle the base case $h=0$ by setting $d^\inner_0=2$ and $d^\outer_0=2q$ and showing that \ref{o:coloring} holds, as follows.
Let $S$ be a $0$-subtree with root $u$ and leaves $v$ and $w$, where $v$ is the left child and $w$ is the right child of $u$.
A $2$-coloring of the inner incomparable pairs of $S$ that uses one color on the incomparable pairs in $X(v)\times X(v)$ and the other on those in $X(w)\times X(w)$ is proper, as every inner alternating cycle of $S$ (by definition) contains incomparable pairs from both of these sets.
As for the outer incomparable pairs of $S$, color them with $2q$ colors using one color on the incomparable pairs $(x,y)\in X(v)\times X(w)$ for each value of $\bagbl{u}{x}\in Q$ and one color on the incomparable pairs $(x,y)\in X(w)\times X(v)$ for each value of $\bagbl{u}{x}\in Q$.
Suppose that $(x_1,y_1),\ldots,(x_m,y_m)$ is a monochromatic outer alternating cycle of $S$ in this coloring.
Thus, the pairs $(x_i,y_i)$ either all belong to $X(v)\times X(w)$ or all belong to $X(w)\times X(v)$.
Consequently, since $x_1\leq y_2$, the definition of the order $\leq$ through the status relation $R(u)$ or $R'(u)$ (respectively) yields $x_2\leq y_2$, which is a contradiction.
Hence, the coloring of the outer alternating cycles of $S$ is proper.
This completes the base case of the induction.

We proceed to the induction step for $h\in[p]$.
By the induction hypothesis, \ref{o:coloring} or \ref{o:chain} holds for $h-1$.
The latter case immediately yields \ref{o:chain} for $h$, so assume the former.
In other words, for every $(h-1)$-subtree $S$ and every $\type\in\{$inner$,{}$outer$\}$, we can find a proper $d^\type_{h-1}$-coloring of the $\type$ incomparable pairs of $S$; call it $\phi^\type\langle S\rangle$.
We assume that all colorings $\phi^\inner\langle S\rangle$ share the same palette of $d^\inner_{h-1}$ colors and all colorings $\phi^\outer\langle S\rangle$ share the same palette of $d^\outer_{h-1}$ colors.

Consider an arbitrary $h$-subtree $S^*$.
Let $r^0$ be the root of $S^*$ and $S^0$ be the unique $(h-1)$-subtree of $S^*$ with root $r^0$.
The \emph{level} of an $(h-1)$-subtree $S$ of $S^*$ with root $r$ is the number of $h$-nodes on the path from $r^0$ to $r$ in $T$.
Thus $S^0$ is the unique $(h-1)$-subtree of $S^*$ at level $0$.
Note that an $(h-1)$-subtree of $S^*$ at level $\ell+1$ yields an $h$-chain of length $\ell$ formed by the $h$-nodes that witness the level.
For each $\type\in\{$inner$,{}$outer$\}$, we provide two different ways of coloring the $\type$ incomparable pairs of $S^*$ depending on whether or not there exists an $(h-1)$-subtree of $S^*$ at level greater than $\ell$, and we set $d^\type_h$ to be the maximum number of colors used in the respective coloring:
\[d^\type_h=d^\type_{h-1}\cdot\max\set[\Bigg]{\sum_{i=0}^\ell\bigl(d^\inner_{h-1}\bigr)^i,\;1+3q^3\bigl(d^\inner_{h-1}\bigr)^2}{.}\]

\begin{case}
The level of every $(h-1)$-subtree of $S^*$ is at most $\ell$.
\end{case}

Consider an arbitrary $\type$ incomparable pair $(x,y)$ of $S^*$.
Let $S^0,\ldots,S^j$ be the unique $(h-1)$-subtrees at levels $0,\ldots,j$, respectively, such that $(x,y)$ is an incomparable pair of $S^0,\ldots,S^{j}$, where $j$ is maximum (at most $\ell$).
Thus $(x,y)$ is an inner incomparable pair of $S^0,\ldots,S^{j-1}$ and a $\type$ incomparable pair of $S^j$.
The aimed coloring of $\type$ incomparable pairs of $S^*$ assigns the following color to $(x,y)$:
\[\bigl(j,\:\phi^\inner\langle S^0\rangle(x,y),\:\ldots,\:\phi^\inner\langle S^{j-1}\rangle(x,y),\:\phi^\type\langle S^j\rangle(x,y)\bigr){.}\]

We claim that this coloring is proper.
Suppose for the sake of contradiction that the resulting coloring yields a monochromatic $\type$ alternating cycle $C$ of $S^*$.
Let $(j,c^0,\ldots,c^j)$ be the common color of the members of $C$.
Let $S$ be the unique $(h-1)$-subtree of $S^*$ at maximum level such that the members of $C$ are incomparable pairs of $S$, and let $i$ be the level of $S$.
It follows that $i\leq j$ and the $(h-1)$-subtrees $S^0,\ldots,S^i$ (where $S^i=S$) used in the color assignment are common for all members of~$C$.
Thus, if $i<j$, then $C$ is an inner alternating cycle of $S$ that is monochromatic in $\phi^\inner\langle S\rangle$ with color $c^i$.
(Note that not all incomparable pairs of $C$ are in the same cluster of $S$ by our choice of $S$.)
If $i=j$, then $C$ is a $\type$ alternating cycle of $S$ that is monochromatic in $\phi^\type\langle S\rangle$ with color $c^j$.
This is a contradiction.

\begin{case}
There is an $(h-1)$-subtree of $S^*$ at level greater than $\ell$, and consequently there is an $h$-chain of length $\ell$.
\end{case}

Suppose henceforth that there is no $h$-cross.
We aim at constructing a proper $d^\type_h$-coloring of the $\type$ incomparable pairs of $S^*$.
Thus, consider an arbitrary $\type$ incomparable pair $(x,y)$ of $S^*$.
Let $S$ be the unique $(h-1)$-subtree with maximum level such that $(x,y)$ is an incomparable pair of $S$, let $r$ be the root of $S$, and let $j$ be the level of $S$.
If $j\geq 1$, then let $S'$ be the unique $(h-1)$-subtree with level $j-1$ such that $(x,y)$ is an incomparable pair of $S'$, and let $r'$ be the root of $S'$.
Thus, $(x,y)$ is a $\type$ incomparable pair of $S$ and (if $j\geq 1$) an inner incomparable pair of $S^0$ and $S'$.
The aimed coloring of $\type$ incomparable pairs of $S^*$ assigns the following color to $(x,y)$:
\begin{alignat*}{3}
\bigl(&0,\:&&\phi^\type\langle S^0\rangle(x,y)\bigr) && \quad\text{if }j=0{,} \\
\bigl(&j',\:&&\phi^\inner\langle S^0\rangle(x,y),\:\phi^\type\langle S\rangle(x,y),\:\phi^\inner\langle S'\rangle(x,y),\:\bagbl{r}{x},\:\bagbl{r}{y},\:\bagbl{r'}{x}\bigr) && \quad\text{if }j\geq 1{,}
\end{alignat*}
where $j'\in\set{1,2,3}$ is such that $j'\equiv j\pmod{3}$.

We claim that this coloring is proper.
Suppose for the sake of contradiction that the resulting coloring yields a monochromatic $\type$ alternating cycle $C$ of $S^*$ comprised of incomparable pairs $(x_1,y_1),\ldots,(x_m,y_m)$.
If their common color is $(0,c^0)$, then $C$ is a $\type$ alternating cycle of $S^0$ that is monochromatic in $\phi^\type\langle S^0\rangle$ with color $c^0$, which is a contradiction.
Thus, the common color of $(x_1,y_1),\ldots,(x_m,y_m)$ has $1$, $2$, or $3$ on the first coordinate.
Let $(j',c^0,c,c',\alpha,\beta,\alpha')$ be that common color, where $j'\in\set{1,2,3}$.

For each $i\in[m]$, let $S_i$ be the unique $(h-1)$-subtree at maximum level such that $(x_i,y_i)$ is an incomparable pair of $S_i$, let $r_i$ be the root of $S_i$, and let $j_i$ be the level of $S_i$.
It follows that $j_i\geq 1$ and $j_i\equiv j'\pmod{3}$, for all $i\in[m]$.
Let $\bar S$ be the unique $(h-1)$-subtree at maximum level such that $(x_1,y_1),\ldots,(x_m,y_m)$ are incomparable pairs of $\bar S$, let $\bar r$ be the root of $\bar S$, and let $\bar j$ be the level of $\bar S$.
For each $i\in[m]$, we have $\bar j\leq j_i$, and $(x_i,y_i)$ is an inner incomparable pair (if $\bar j<j_i$) or a $\type$ incomparable pair (if $\bar j=j_i$) of $\bar S$.

If $\bar j=0$, so that $\bar S=S^0$, then $C$ is an inner alternating cycle of $S^0$ that is monochromatic in $\phi^\inner\langle S^0\rangle$ with color $c^0$, which is a contradiction.
(Note that incomparable pairs of $C$ cannot all be in the same cluster of $S^0$ by our choice of $\bar S$.)
Thus $\bar j\geq 1$, so $\bar S\neq S^0$ and $\bar r$ is an $h$-node.

Let $A$ and $B$ be the $E_h$-classes such that $\alpha\in A$ and $\beta\in B$.

\begin{claim}\label{claim1}
For every $h$-node $u$ of $S^*$ and every $i\in[m]$, if $x_i\in X(u)$ then $\bagbl{u}{x_i}\in A$, and if $y_i\in X(u)$ then $\bagbl{u}{y_i}\in B$.
\end{claim}

\begin{proof}
For $u=r_i$, we have $\bagbl{r_i}{x_i}=\alpha\in A$.
If $u\neq r_i$, then $u$ and $r_i$ are $h$-neighbors either with $u\downto r_i$, which by \Cref{fact:E_h} implies $\bagbl{u}{x_i}=\bagbb{u}{r_i}\alpha\in A$ due to $\bagbb{u}{r_i}\in E_h$, or with $r_i\downto u$, which implies $\alpha=\bagbb{r_i}{u}\bagbl{u}{x_i}$, and thus, by \Cref{fact:E_h}, $\bagbl{u}{x_i}\in A$ due to $\bagbb{r_i}{u}\in E_h$.
The proof for $y_i$ is analogous.
\end{proof}

\begin{claim}\label{claim2}
For any $h$-nodes $u$ and $v$ of $S^*$ such that $\bar r\downto u\downto v$ are $h$-neighbors, there is no $i\in[m]$ such that $x_i,y_i\in X(v)$ and $x_{i-1},y_{i+1}\notin X(u)$.
\end{claim}

\begin{proof}
Let $\sigma=\bagbb{u}{v}$; we have $\sigma\in E_h$.
\Cref{claim1} yields that $\bagbl{\bar r}{x_{i-1}},\bagbl{v}{x_i}\in A$ and $\bagbl{\bar r}{y_{i+1}},\bagbl{v}{y_i}\in B$.
This and the fact that $A$ and $B$ are $E_h$-classes imply that
\[\sigma\bagbl{\bar r}{x_{i-1}}=\sigma\bagbl{v}{x_i}=\bagbl{u}{x_i}\in\labgr{y_{i+1}}{u}{,}\]
where the membership follows from \Cref{fact:L-sets}.
Similarly, we have
\[\sigma\bagbl{\bar r}{y_{i+1}}=\sigma\bagbl{v}{y_i}=\bagbl{u}{y_i}\in\lablr{x_{i-1}}{u}{.}\]
Since $x_{i-1}\parallel y_{i+1}$, we conclude that $(\bar r,u,x_{i-1},y_{i+1},\sigma)$ is an $h$-cross.
This contradicts the assumption that there is no $h$-cross.
\end{proof}

\begin{claim}\label{claim3}
For every $i\in[m]$, if $x_i$ and $y_{i+1}$ lie in different clusters of $\bar S$, then $j_i,j_{i+1}\leq\bar j+2$.
\end{claim}

\begin{proof}
Suppose for the sake of contradiction that $j_i\geq\bar j+3$.
Then there are $h$-neighbors $u\downto v\downto w$ in $S^*$ such that $u$ is a leaf of $\bar S$ and $x_i,y_i\in X(w)$ while $y_{i+1}\notin X(u)$.
If $x_{i-1}\in X(v)$, then $\bagbl{v}{x_{i-1}},\bagbl{v}{x_i}\in A$ by \Cref{claim1}, so
\[\bagbl{u}{x_{i-1}}=\bagbb{u}{v}\bagbl{v}{x_{i-1}}=\bagbb{u}{v}\bagbl{v}{x_i}=\bagbl{u}{x_i},\]
where the second equality is due to $A$ being an $E_h$-class.
By \Cref{fact:L-sets}, $x_i\leq y_{i+1}$ entails $\bagbl{u}{x_i}\in\labgr{y_{i+1}}{u}$ while $x_{i-1}\parallel y_{i+1}$ entails $\bagbl{u}{x_{i-1}}\notin\labgr{y_{i+1}}{u}$, which contradicts the equality above.
Thus $x_{i-1}\notin X(v)$, which contradicts \Cref{claim2} applied to $h$-neighbors $\bar r\downto v\downto w$.
The proof that $j_{i+1}\leq\bar j+2$ is analogous.
\end{proof}

The rest of the proof splits into two cases, the first of which can occur only when $\type={}$outer.

\begin{subcase}
At least one of $(x_1,y_1),\ldots,(x_m,y_m)$ is an outer incomparable pair of $\bar S$.
\end{subcase}

For those indices $i\in[m]$ for which $(x_i,y_i)$ is an outer incomparable pair of $\bar S$, we have $\bar S=S_i$ and $\bar j=j_i\equiv j'\pmod{3}$.
On the other hand, if $(x_i,y_i)$ is an inner incomparable pair of $\bar S$, then $j_i>\bar j$, which due to $j_i\equiv j'\pmod{3}$ implies $j_i\geq \bar j+3$.
In either case, by \Cref{claim3}, $x_{i-1}$, $x_i$, $y_i$, and $y_{i+1}$ all lie in the same cluster of $\bar S$.
Consequently, the outer incomparable pairs of $\bar S$ among $(x_1,y_1),\ldots,(x_m,y_m)$ form an outer alternating cycle $\bar C$ of $\bar S$.
The cycle $\bar C$ is monochromatic in $\phi^\outer\langle\bar S\rangle$ with color $c$, which is a contradiction.

\begin{subcase}
All $(x_1,y_1),\ldots,(x_m,y_m)$ are inner incomparable pairs of $\bar S$.
\end{subcase}

Let $j\in\set{\bar j,\bar j+1,\bar j+2}$ be such that $j\equiv j'\pmod{3}$.
We claim that
\[j_1=\cdots=j_m=j\in\set{\bar j,\bar j+1}{.}\]
For the proof, consider any maximal contiguous part $(x_i,y_i),\ldots,(x_{i'},y_{i'})$ of $C$ that lies in a single cluster $X(u)$ of $\bar S$, so that $x_{i-1}$ and $y_{i'+1}$ lie in clusters other than $X(u)$.
(Note here that by the choice of $\bar S$, not all $(x_1,y_1),\ldots,(x_m,y_m)$ lie in a single cluster of $\bar S$.)
\Cref{claim3} asserts that $j_i,j_{i'}\leq\bar j+2$.
As $j_i\equiv j_{i'}\equiv j'\pmod{3}$, it follows that $j_i=j_{i'}=j$.
If $j=\bar j$ (which is possible only when $\type={}$inner), then $X(u)$ is a cluster of $S^*$, so $j_i=\cdots=j_{i'}=\bar j$.
If $j=\bar j+1$, then we have $\bagbl{u}{x_i}=\alpha=\bagbl{u}{x_{i'}}$.
By \Cref{fact:L-sets}, this and $x_{i'}\leq y_{i'+1}$ imply $\bagbl{u}{x_i}\in\labgr{y_{i'+1}}{u}$, which entails $x_i\leq y_{i'+1}$, which is possible only when $i=i'$.
Finally, if $j=\bar j+2$, then the fact that $\bagbl{u}{x_i}=\alpha'=\bagbl{u}{x_{i'}}\in\labgr{y_{i'+1}}{u}$ yields $i=i'$ again, but now \Cref{claim2} applied to $h$-neighbors $\bar r\downto u\downto r_i$ yields a contradiction.
So we argued in all cases that we have $j_i=\cdots=j_{i'}=j\in\set{\bar j,\bar j+1}$.
Applying this reasoning to every contiguous part of $C$ as above yields the claim.

Now, if $j_1=\cdots=j_m=\bar j$ (and $\type={}$inner), then $C$ is an inner alternating cycle of $\bar S$ that is monochromatic in $\phi^\inner\langle\bar S\rangle$ with color $c$, which is a contradiction.
If $j_1=\cdots=j_m=\bar j+1$, then $C$ is an inner alternating cycle of $\bar S$ that is monochromatic in $\phi^\inner\langle\bar S\rangle$ with color $c'$, which is a contradiction.
\end{proof}

Our goal now is to construct a large standard example using a large $h$-chain and an $h$-cross.
The following lemma exhibits a structure that allows us to find such a standard example.

\begin{lemma}\label{h-cross}
There is a decent split of order\/ $p=q^q\cdot 2^{(q+q^2)4^q}$ with the property that for every\/ $h\in[p]$, if there is an\/ $h$-cross, then there are\/ $E_h$-classes\/ $A$ and\/ $B$ such that the following holds for every\/ $h$-chain\/ $u_0\downto\cdots\downto u_k$.
Let\/ $\tau_j=\bagbb{u_{j-1}}{u_j}$, and let\/ $\alpha_j\in A$ and\/ $\beta_j\in B$ be such that\/ $\tau_jA=\set{\alpha_j}$ and\/ $\tau_jB=\set{\beta_j}$, for each\/ $j\in[k]$.
Then there are incomparable pairs\/ $x_j,y_j\in X(u_{j-1})\setminus X(u_j)$ for all\/ $j\in[k]$ such that the following two conditions are satisfied:
\begin{enumerate}
\item for every\/ $j\in[k]$, we have
\[\bagbl{u_{j-1}}{x_j}=\alpha_j\text{,} \quad \bagbl{u_{j-1}}{y_j}=\beta_j\text{,} \quad A\subseteq\labgr{y_j}{u_j}\text{,} \quad\text{and}\quad B\subseteq\lablr{x_j}{u_j}\text{;}\]
\item for every\/ $j\in\set{2,\ldots,k}$, we have
\[\labgr{y_j}{u_j}=\labgr{y_{j-1}}{u_{j-1}}\tau_j \quad\text{and}\quad \lablr{x_j}{u_j}=\lablr{x_{j-1}}{u_{j-1}}\tau_j\text{.}\]
\end{enumerate}
\end{lemma}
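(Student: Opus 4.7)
My plan is to apply Colcombet's Theorem to a carefully constructed $\Lambda$-labeling of $T$ that records, for every pair $u \downto v$, not only the relabeling $\bagbb{u}{v}$ but also aggregated information about descendants of $u$ not in $X(v)$ and about incomparable pairs among them. Concretely, I would take semigroup elements of the form $(\rho, \Phi, \Psi)$, where $\rho\colon Q\from Q$ is intended to equal $\bagbb{u}{v}$, $\Phi$ is a compact record of the triples $(\bagbl{u}{x}, \lablr{x}{v}, \labgr{x}{v})$ for single descendants $x \in X(u)\setminus X(v)$, and $\Psi$ is a compact record of the signatures $(\bagbl{u}{x}, \bagbl{u}{y}, \lablr{x}{v}, \labgr{y}{v})$ of incomparable pairs $x, y \in X(u)\setminus X(v)$. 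The labeling is $\lambda(u,v) = (\bagbb{u}{v}, \Phi(u,v), \Psi(u,v))$, and I determine the product $\cdot$ by requiring $\lambda(u,v)\cdot\lambda(v,w) = \lambda(u,w)$ to hold for every $u \downto v \downto w$: functional composition on the $\rho$-coordinate; on the $\Phi$-coordinate, the union of $\Phi(u,v)$ with a $\bagbb{u}{v}$-pushforward of $\Phi(v,w)$; and on the $\Psi$-coordinate, the union of $\Psi(u,v)$, a pushforward of $\Psi(v,w)$, and ``mixed'' signatures built by pairing a profile from $\Phi(u,v)$ with a pushforwarded profile from $\Phi(v,w)$, using \Cref{fact:L-sets} to read incomparability from the labels alone. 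A compact encoding---in particular storing the set-valued parts of $\Psi$-entries in an $\equiv_h$-saturated form---should yield $\size{\Lambda} \leq q^{q+3}\cdot 16^q$. Colcombet's Theorem then produces a split $s$ of order at most $\size{\Lambda}$ that is forward Ramseyan with respect to $\lambda$, and since $\rho$ is a coordinate, $s$ is decent.

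Now fix $h \in [p]$ and assume an $h$-cross $(u, v, x, y, \sigma_0)$ exists, with signature $(\sigma_0\alpha, \sigma_0\beta, B', A') \in \Psi(u,v)$, where $\alpha = \bagbl{u}{x}$, $\beta = \bagbl{u}{y}$, $B' = \lablr{x}{v}$, and $A' = \labgr{y}{v}$. Let $A$ and $B$ denote the $E_h$-classes of $\sigma_0\alpha$ and $\sigma_0\beta$. For any $h$-chain $u_0 \downto \cdots \downto u_k$, put $\tau_j = \bagbb{u_{j-1}}{u_j} \in E_h$. Forward Ramseyanity gives $\lambda(u_{j-1}, u_j) \cdot \lambda(u, v) = \lambda(u_{j-1}, u_j)$ for every $j \in [k]$; reading the $\Psi$-coordinate forces the $\tau_j$-pushforward of our signature to already lie in $\Psi(u_{j-1}, u_j)$. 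Using $\tau_j\sigma_0 = \tau_j$ and the constancy of $\tau_j$ on each $E_h$-class, this pushforward takes the form $(\alpha_j, \beta_j, B'\tau_j, A'\tau_j)$, where $\tau_j A = \set{\alpha_j}$ and $\tau_j B = \set{\beta_j}$; in particular $\alpha_j, \beta_j$ belong to $A, B$ by \Cref{fact:E_h}. Membership of this tuple in $\Psi(u_{j-1}, u_j)$ yields incomparable $x_j, y_j \in X(u_{j-1})\setminus X(u_j)$ with $\bagbl{u_{j-1}}{x_j} = \alpha_j$, $\bagbl{u_{j-1}}{y_j} = \beta_j$, $\lablr{x_j}{u_j} = B'\tau_j$, and $\labgr{y_j}{u_j} = A'\tau_j$. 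The $\equiv_h$-saturation of $A', B'$ forces $A \subseteq A'$ and $B \subseteq B'$, hence $A \subseteq A'\tau_j = \labgr{y_j}{u_j}$ and $B \subseteq B'\tau_j = \lablr{x_j}{u_j}$, which is condition (1). For condition (2), the identity $\tau_{j-1}\tau_j = \tau_{j-1}$ in $E_h$ gives $A'\tau_{j-1}\tau_j = A'\tau_{j-1}$, while $\equiv_h$-saturation combined with $\tau_{j-1}(q) \equiv_h \tau_j(q)$ for every $q$ (valid for $\tau_{j-1}, \tau_j \in E_h$, since $\sigma\tau_{j-1}q = \sigma q = \sigma\tau_j q$ for all $\sigma \in E_h$) yields $A'\tau_{j-1} = A'\tau_j$; thus $\labgr{y_{j-1}}{u_{j-1}}\tau_j = A'\tau_j = \labgr{y_j}{u_j}$, and the analogous identity holds for $\lablr{}{}$.

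The main obstacle is the semigroup design: $\Phi$ and $\Psi$ must be simultaneously expressive enough for $\cdot$ to be well-defined and associative (especially for the mixed signatures to be correctly produced from profile pairs), compact enough to hit $q^{q+3}\cdot 16^q$, and $\equiv_h$-saturated on their set-valued parts for every $h$ at once---despite the semigroup not knowing $h$ a priori. I expect the bulk of the technical work to lie in choosing this encoding uniformly across all levels, and in verifying associativity of the composition rule on the mixed signatures; once the semigroup is in place, the extraction of $x_j, y_j$ from the forward Ramseyan identity is essentially a direct syntactic reading.
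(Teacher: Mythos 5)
Your overall plan coincides with the paper's: encode in each semigroup element $(\rho,\Phi,\Psi)$ the relabeling, the profiles of single descendants, and the signatures of incomparable pairs; apply Colcombet's Theorem; and extract the $h$-crosses along the $h$-chain via forward Ramseyanity. However, there is a genuine gap in the step where you derive condition~(1), and it is exactly the subtlety you flag at the end but then sidestep. First, a small slip: the element of $\Psi(u,v)$ contributed by the pair $(x,y)$ is $(\alpha,\beta,B',A')=\bigl(\bagbl{u}{x},\bagbl{u}{y},\lablr{x}{v},\labgr{y}{v}\bigr)$, not $(\sigma_0\alpha,\sigma_0\beta,B',A')$; the $h$-cross condition only gives $\sigma_0\alpha\in A'$ and $\sigma_0\beta\in B'$. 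After pushing with the identities $\nu_j\cdot\lambda(u,v)=\nu_j$ and $\lambda(u,v)\cdot\nu_j=\lambda(u,v)$, you can legitimately obtain $(\alpha_j,\beta_j,B'\tau_j,A'\tau_j)\in\Psi(u_{j-1},u_j)$; the resulting sets $A'\tau_j,B'\tau_j$ are $\equiv_h$-saturated (because $\tau_j\in E_h$). But the inclusion $A\subseteq A'\tau_j$ now reduces to $\tau_j A\subseteq A'$, i.e.\ $\alpha_j\in A'$, whereas the $h$-cross only guarantees $\sigma_0\alpha\in A'$. Since $\tau_j$ and $\sigma_0$ are different elements of $E_h$, they may send the class $A$ to different singletons, and nothing forces $\alpha_j\in A'$. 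Your suggested repair---storing the set-valued parts of $\Psi$-entries in ``$\equiv_h$-saturated form''---cannot work as stated, because the semigroup must be chosen before $h$ is known and the equivalence $\equiv_h$ depends on $h$; building saturation for all $h$ simultaneously would require knowing the split.

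The paper sidesteps saturation of $A',B'$ entirely. Instead of hoping the semigroup stores saturated sets, one uses the $\sigma$ from the $h$-cross itself on the right: one sets $A_1=\labgr{y}{v}\sigma$ and $B_1=\lablr{x}{v}\sigma$, and then $A_j=A_{j-1}\tau_j$, $B_j=B_{j-1}\tau_j$ for $j\geq 2$. These are automatically $\equiv_h$-saturated (right coimage under an element of $E_h$), and $A\subseteq A_1$ follows directly from $\sigma\bagbl{u}{x}\in\labgr{y}{v}$ without any saturation assumption on $A'$. To see that $(\alpha_1,\beta_1,B_1,A_1)\in\Psi(u_0,u_1)$, one picks any $\mu\in F_h$ whose first coordinate is $\sigma$ and uses $\nu_1\cdot\lambda(u,v)\cdot\mu=\nu_1$; the extra factor $\mu$ is what inserts the right-multiplication by $\sigma$ into the set coordinates, which your single (or double) application of forward Ramseyanity never produces. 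The rest of the induction ($j\geq 2$, using $\nu_j\cdot\nu_{j-1}\cdot\nu_j=\nu_j$) is exactly as you sketch. So the missing idea is concrete: do not try to saturate inside the semigroup, but compose with the $h$-cross's own $\sigma$ on the right of the signature when reading it off.
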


\begin{proof}
Let $\Lambda$ be the set of all triples $(\rho,\Phi,\Psi)$ such that
\[\rho\colon Q\from Q{,} \qquad \Phi\subseteq Q\times 2^Q\times 2^Q{,} \qquad \Psi\subseteq Q\times Q\times 2^Q\times 2^Q{.}\]
We have $\size{\Lambda}=q^q\cdot 2^{(q+q^2)4^q}=p$.
For every pair of nodes $u\downto v$, we define
\[\lambda(u,v)=(\bagbb{u}{v},\Phi(u,v),\Psi(u,v))\in\Lambda\]
where
\begin{align*}
\Phi(u,v) &= \set[\big]{\bigl(\bagbl{u}{x},\lablr{x}{v},\labgr{x}{v}\bigr)\colon x\in X(u)\setminus X(v)}{,}\\
\Psi(u,v) &= \set[\big]{\bigl(\bagbl{u}{x},\bagbl{u}{y},\lablr{x}{v},\labgr{y}{v}\bigr)\colon x,y\in X(u)\setminus X(v)\text{ such that }x\parallel y}{.}
\end{align*}
We define a binary operation $\cdot$ on $\Lambda$ in the unique possible way that makes $(\Lambda,\cdot)$ a semigroup and $\lambda$ a $(\Lambda,\cdot)$-labeling of $T$.
Namely,
\[(\rho_1,\Phi_1,\Psi_1)\cdot(\rho_2,\Phi_2,\Psi_2)=(\rho_1\rho_2,\Phi',\Psi')\]
where
\begin{align*}
\Phi' ={}& \set[\big]{(\alpha,A\rho_2,B\rho_2)\colon(\alpha,A,B)\in\Phi_1}\cup\set[\big]{(\rho_1\alpha,A,B)\colon(\alpha,A,B)\in\Phi_2}{,} \\
\Psi' ={}& \set[\big]{(\alpha,\beta,A\rho_2,B\rho_2)\colon(\alpha,\beta,A,B)\in\Psi_1}\cup\set[\big]{(\rho_1\alpha,\rho_1\beta,A,B)\colon(\alpha,\beta,A,B)\in\Psi_2} \\[-\jot]
&\quad \cup\set[\big]{(\alpha_1,\rho_1\alpha_2,A_1\rho_2,B_2)\colon(\alpha_1,A_1,B_1)\in\Phi_1,\:(\alpha_2,A_2,B_2)\in\Phi_2,\:\alpha_2\notin A_1\cup B_1} \\[-\jot]
&\quad \cup\set[\big]{(\rho_1\alpha_2,\alpha_1,A_2,B_1\rho_2)\colon(\alpha_1,A_1,B_1)\in\Phi_1,\:(\alpha_2,A_2,B_2)\in\Phi_2,\:\alpha_2\notin A_1\cup B_1}{.}
\end{align*}
It is straightforward but tedious to verify that $\cdot$ is indeed an associative operation, while the following claim asserts that $\lambda$ is indeed a $(\Lambda,\cdot)$-labeling of $T$.

\begin{claim}
If\/ $u\downto v\downto w$, then\/ $\lambda(u,w)=\lambda(u,v)\cdot\lambda(v,w)$.
\end{claim}

\begin{proof}
Fix a triple of nodes $u\downto v\downto w$.
Let
\begin{alignat*}{4}
(\rho_1,\Phi_1,\Psi_1) &= (\rho(u,v),{}&&\Phi(u,v),{}&&\Psi(u,v)) &&= \lambda(u,v){,} \\
(\rho_2,\Phi_2,\Psi_2) &= (\rho(v,w),{}&&\Phi(v,w),{}&&\Psi(v,w)) &&= \lambda(v,w){.}
\end{alignat*}
We need to show that
\[\lambda(u,w) = (\rho(u,w),\Phi(u,w),\Psi(u,w)) = (\rho_1\rho_2,\Phi',\Psi'){,}\]
where $\Phi'$ and $\Psi'$ are defined above.
It is clear that $\rho(u,w)=\rho_1\rho_2$.
Observe that
\begin{enumerate}
\item\label{item:lambda-1} if $x\in X(u)\setminus X(v)$, then, by the definition of $\lablr{x}{w}$, we have $\gamma\in\lablr{x}{w}$ if and only if $\rho_2\gamma=\bagbb{v}{w}\gamma\in\lablr{x}{v}$, which implies (using the coimage notation for relations) that $\lablr{x}{w}=\lablr{x}{v}\rho_2$, and analogously $\labgr{x}{w}=\labgr{x}{v}\rho_2$;
\item\label{item:lambda-2} if $x\in X(v)\setminus X(w)$, then $\bagbl{u}{x}=\bagbb{u}{v}\bagbl{v}{x}=\rho_1\bagbl{v}{x}$;
\item\label{item:lambda-3} if $x\in X(u)\setminus X(v)$ and $y\in X(v)\setminus X(w)$, then, by \Cref{fact:L-sets}, we have $x\parallel y$ if and only if $\bagbl{v}{y}\notin\lablr{x}{v}\cup\labgr{x}{v}$.
\end{enumerate}

Now, to see that $\Phi(u,w)=\Phi'$, observe that the two terms in the formula for $\Phi'$ correspond to two cases of triples $\bigl(\bagbl{u}{x},\lablr{x}{w},\labgr{x}{w}\bigr)\in\Phi(u,w)$: the first that $x\in X(u)\setminus X(v)$ and the second that $x\in X(v)\setminus X(w)$.
The specific form of the triples follows from \ref{item:lambda-1} and \ref{item:lambda-2} above.

Similarly, to see that $\Psi(u,w)=\Psi'$, observe that the four terms in the formula for $\Psi'$ correspond to four cases of quadruples $\bigl(\bagbl{u}{x},\bagbl{u}{y},\lablr{x}{w},\labgr{y}{w}\bigr)\in\Psi(u,w)$: the first that $x,y\in X(u)\setminus X(v)$, the second that $x,y\in X(v)\setminus X(w)$, the third that $x\in X(u)\setminus X(v)$ and $y\in X(v)\setminus X(w)$, and the fourth that $y\in X(u)\setminus X(v)$ and $x\in X(v)\setminus X(w)$.
The specific form of the quadruples follows from \ref{item:lambda-1} and \ref{item:lambda-2} applied to $x$ or $y$ accordingly, and the specification predicate used in the last two terms follows from \ref{item:lambda-3}.
\end{proof}

By Colcombet's Theorem, there is a split $s\colon U\to[p]$ of $T$ that is forward Ramseyan with respect to $\lambda$.
Since the relabeling function $\rho$ is on the first coordinate of $\lambda$, the split $s$ is decent.
We fix $s$ for the remainder of the proof; in particular, all considered crosses or chains are with respect to $s$.

For every $h\in[p]$, let
\[F_h=\set{\lambda(u,v)\colon u\downto v\text{ are }h\text{-neighbors in }s}{.}\]
Note that the set $E_h$ used for the definition of an $h$-cross is just the projection of $F_h$ on the first coordinate.
Since $s$ is forward Ramseyan, we have
\[\nu\cdot\nu'=\nu \qquad\text{for all }\nu,\nu'\in F_h{.}\]

Now, let $h\in[p]$, and let $(u,v,x,y,\sigma)$ be an $h$-cross.
Let $\alpha=\bagbl{u}{x}$ and $\beta=\bagbl{u}{y}$.
Thus $\sigma\alpha\in\labgr{y}{v}$ and $\sigma\beta\in\lablr{x}{v}$.
Let $A$ and $B$ be the $E_h$-classes such that $\alpha\in A$ and $\beta\in B$.
Since $A$ and $B$ are $E_h$-classes and $\sigma\in E_h$, we have $\sigma A=\set{\sigma\alpha}\subseteq\labgr{y}{v}$ and $\sigma B=\set{\sigma\beta}\subseteq\lablr{x}{v}$.
This implies
\begin{equation}\label{eq:AB}
A\subseteq\labgr{y}{v}\sigma \qquad\text{and}\qquad B\subseteq\lablr{x}{v}\sigma{.}
\end{equation}
Since $u\downto v$ are $h$-neighbors, we have $\lambda(u,v)=(\bagbb{u}{v},\Phi(u,v),\Psi(u,v))\in F_h$ where
\begin{equation}\label{eq:lambda}
\bigl(\alpha,\:\beta,\:\lablr{x}{v},\:\labgr{y}{v}\bigr)\in\Psi(u,v){.}
\end{equation}

Consider an $h$-chain $u_0\downto\cdots\downto u_k$.
For each $j\in[k]$, let
\[\tau_j=\bagbb{u_{j-1}}{u_j}{,}\quad\Phi_j=\Phi(u_{j-1},u_j){,}\quad\Psi_j=\Psi(u_{j-1},u_j){,}\quad\nu_j=(\tau_j,\Phi_j,\Psi_j)=\lambda(u_{j-1},u_j){,}\]
and let\/ $\alpha_j\in A$ and\/ $\beta_j\in B$ be such that\/ $\tau_jA=\set{\alpha_j}$ and\/ $\tau_jB=\set{\beta_j}$.
We construct sets $A_1,B_1,\ldots,A_k,B_k\subseteq Q$ with the following two properties:
\begin{gather}
(\alpha_j,\beta_j,B_j,A_j)\in\Psi_j{,} \quad A\subseteq A_j{,} \quad\text{and}\quad B\subseteq B_j{,} \quad\text{for all }j\in[k]{;} \label{eq:h-cross-1} \\
A_j=A_{j-1}\tau_j \quad\text{and}\quad B_j=B_{j-1}\tau_j{,} \quad\text{for all }j\in\set{2,\ldots,k}{.} \label{eq:h-cross-2}
\end{gather}
We proceed by induction on $j\in[k]$.
Let $A_1=\labgr{y}{v}\sigma$ and $B_1=\lablr{x}{v}\sigma$, so that $A\subseteq A_1$ and $B\subseteq B_1$ by~\eqref{eq:AB}.
Since $\sigma\in E_h$, there exists $\mu\in F_h$ with first coordinate $\sigma$, say $\mu=(\sigma,\Phi_\mu,\Psi_\mu)$.
Since $\nu_1,\lambda(u,v),\mu\in F_h$, we have
\[\lambda(u,v)\cdot\mu=\lambda(u,v) \qquad\text{and}\qquad \nu_1\cdot\lambda(u,v)=\nu_1{.}\]
Property~\eqref{eq:lambda}, the fact that $\lambda(u,v)\cdot\mu=\lambda(u,v)$, and the definition of $\cdot$ (specifically, the first term in the formula for $\Psi'$) imply
\[\bigl(\alpha,\:\beta,\:\lablr{x}{v}\sigma,\:\labgr{y}{v}\sigma\bigr)\in\Psi(u,v){.}\]
This, the fact that $\nu_1\cdot\lambda(u,v)=\nu_1$, and the definition of $\cdot$ (specifically, the second term in the formula for $\Psi'$) imply
\[(\alpha_1,\beta_1,B_1,A_1)=\bigl(\tau_1\alpha,\:\tau_1\beta,\:\lablr{x}{v}\sigma,\:\labgr{y}{v}\sigma\bigr)\in\Psi_1{.}\]
This shows \eqref{eq:h-cross-1} for $j=1$.
Now, let $j\in\set{2,\ldots,k}$, and assume that both \eqref{eq:h-cross-1} and \eqref{eq:h-cross-2} hold for $j-1$.
Let $A_j$ and $B_j$ be defined as in \eqref{eq:h-cross-2}.
By an argument similar to the above, the fact that $\nu_{j-1}\cdot\nu_j=\nu_{j-1}$ and $\nu_j\cdot\nu_{j-1}=\nu_j$, and the definition of $\cdot$ yield
\begin{gather*}
(\alpha_j,\beta_j,B_j,A_j)=(\tau_j\alpha_{j-1},\:\tau_j\beta_{j-1},\:B_{j-1}\tau_j,\:A_{j-1}\tau_j)\in\Psi_j{,} \\
A=A\tau_j\subseteq A_{j-1}\tau_j=A_j{,} \qquad\text{and}\qquad B=B\tau_j\subseteq B_{j-1}\tau_j=B_j{.}
\end{gather*}
This shows \eqref{eq:h-cross-1} for $j$.

Now, for each $j\in[k]$, property \eqref{eq:h-cross-1} and the definition of $\lambda(u_{j-1},u_j)$ yield an incomparable pair $x_j,y_j\in X(u_{j-1})\setminus X(u_j)$ such that
\[\bagbl{u_{j-1}}{x_j}=\alpha_j{,} \quad \bagbl{u_{j-1}}{y_j}=\beta_j{,} \quad A\subseteq A_j=\labgr{y_j}{u_j}{,} \quad\text{and}\quad B\subseteq B_j=\lablr{x_j}{u_j}{.}\]
This and property \eqref{eq:h-cross-2} imply that the incomparable pairs satisfy the assertion of the lemma.
\end{proof}

We have now gathered all the tools to finish the proof of \Cref{thm:dim-bounded}.
Let $s$ be a decent split of order $p=q^q\cdot 2^{(q+q^2)4^q}$ claimed by \Cref{h-cross}.
Let $d$ be the constant claimed in \Cref{dimension} applied in the context of $s$ for $\ell=k$.
Suppose that $\dim(X,\leq)>d$.
By \Cref{dimension}, for some $h\in[p]$, there is an $h$-chain $u_0\downto\cdots\downto u_k$ of length $k$ and an $h$-cross.
Let $A$ and $B$ be $E_h$-classes provided by \Cref{h-cross}.
For each $j\in[k]$, let $\tau_j=\bagbb{u_{j-1}}{u_j}$, and let $\alpha_j\in A$ and $\beta_j\in B$ be such that $\tau_jA=\set{\alpha_j}$ and $\tau_jB=\set{\beta_j}$.
By \Cref{h-cross}, for each $j\in[k]$, there is an incomparable pair $x_j,y_j\in X(u_{j-1})\setminus X(u_j)$ such that
\[\bagbl{u_{j-1}}{x_j}=\alpha_j{,} \quad \bagbl{u_{j-1}}{y_j}=\beta_j{,} \quad A\subseteq\labgr{y_j}{u_j}{,} \quad\text{and}\quad B\subseteq\lablr{x_j}{u_j}{.}\]
By \Cref{fact:E_h}, the first two equalities above imply $\bagbl{u_i}{x_j}\in A$ and $\bagbl{u_i}{y_j}\in B$ whenever $i<j$.
Therefore, for any $i,j\in[k]$ with $i<j$, we have $\bagbl{u_i}{x_j}\in\labgr{y_i}{u_i}$, which implies $x_j\leq y_i$ by \Cref{fact:L-sets}, and we have $\bagbl{u_i}{y_j}\in\lablr{x_i}{u_i}$, which similarly implies $x_i\leq y_j$.
Moreover, since $(x_1,y_1),\ldots,(x_k,y_k)$ are incomparable pairs, there are no other comparabilities among $x_1,y_1,\ldots,x_k,y_k$.
(Here we use the assumption that $k\geq 3$; if $k=2$, then it could happen that $x_1=y_2$ or $x_2=y_1$.)
We conclude that these elements induce a standard example of dimension $k$ as a subposet of $(X,\leq)$.

\section{Treewidth, dimension, and Kelly examples}\label{sec:kelly}

In this section, we prove the following result, which directly implies \Cref{thm:Kelly-dim-bounded-main}.

\begin{theorem}\label{thm:Kelly-dim-bounded}
For every pair of integers\/ $t\geq 1$ and\/ $k\geq 3$, there is a constant\/ $d\in\setN$ such every poset with cover graph of treewidth\/ $t-1$ and with dimension greater than\/ $d$ contains the Kelly example\/ $\kelly_k$ as a subposet.
\end{theorem}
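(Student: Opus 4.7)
The plan is to extend the two-step scheme from the proof of \Cref{thm:dim-bounded}, exploiting the extra structure coming from the tree decomposition. Given a poset $(X,\leq)$ whose cover graph has treewidth $t-1$, I first fix a tree decomposition $(T,B)$ of width $t-1$ with $x\in B(x)$ for every $x\in X$, enumerate each bag as $B(u)=\set{z^u_1,\dots,z^u_t}$, and derive from \Cref{fact:tw-NLC} an NLC-decomposition with label set $Q=2^{[t]}\times 2^{[t]}$, where $\bagbl{u}{x}$ records the pair $(\set{i\colon z^u_i\ge x},\set{i\colon z^u_i\le x})$. The key new feature is the concrete, bag-indexed meaning of the labels.

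Next, I apply the machinery of \Cref{dimension} and \Cref{h-cross} to this NLC-decomposition, but with the chain-length parameter $\ell$ chosen as a Ramsey-type function of $k$ and $4^t$ to enable a later pigeonhole. Assuming $\dim(X,\leq)$ exceeds the constant produced by \Cref{dimension}, I obtain, for some $h\in[p]$, an $h$-chain $u_0\downto\cdots\downto u_\ell$, $E_h$-classes $A,B$, and incomparable pairs $x_j,y_j\in X(u_{j-1})\setminus X(u_j)$ for $j\in[\ell]$, together with the uniform label data from \Cref{h-cross}, yielding a standard-example structure $x_i\leq y_j$ and $x_j\leq y_i$ for all $i<j$.

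The new step is to upgrade this standard-example structure to a Kelly structure via the convexity property. For any $i<j$ and any $l$ with $i\leq l\leq j-1$, the node $u_l$ lies on the $x_i$-to-$y_j$ path in $T$ (since $x_i\notin X(u_l)$ while $y_j\in X(u_l)$), so \Cref{fact:convexity} yields an element of $B(u_l)$ strictly between $x_i$ and $y_j$; iterated applications produce a monotone witness chain $x_i\leq z^{(i,j)}_i\leq\cdots\leq z^{(i,j)}_{j-1}\leq y_j$ with $z^{(i,j)}_l\in B(u_l)$, and dually there is a descending witness chain $y_i\geq w^{(i,j)}_i\geq\cdots\geq w^{(i,j)}_{j-1}\geq x_j$. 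To harmonize the choices, color each $l\in[\ell]$ by the pair of bag indices $(c(l),d(l))\in[t]^2$ chosen by \Cref{fact:convexity} to witness $x_l\leq y_{l+1}$ and $y_l\geq x_{l+1}$ inside $B(u_l)$; the palette has size $t^2$, so a sufficiently long chain produces a monochromatic subchain of length $k$ with common indices $(c^*,d^*)$. Setting $c_l:=z^{u_l}_{c^*}$ and $d_l:=z^{u_l}_{d^*}$ along this subchain, the uniformity of the labels from \Cref{h-cross}, the forward Ramseyanity of the split, and a further application of \Cref{fact:convexity} between consecutive bag elements should force the chains $c_1\leq\cdots\leq c_{k-1}$ and $d_1\geq\cdots\geq d_{k-1}$, together with the witnessing patterns $x_i\leq c_i\leq\cdots\leq c_{j-1}\leq y_j$ and $y_i\geq d_i\geq\cdots\geq d_{j-1}\geq x_j$ required to conclude that $\set{x_j,y_j,c_j,d_j}$ induces $\kelly_k$ as a subposet.

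I expect the main obstacle to be the last step: verifying that a single choice of bag indices $(c^*,d^*)$ actually induces \emph{monotone poset chains} $c_1\leq\cdots\leq c_{k-1}$ and $d_1\geq\cdots\geq d_{k-1}$ through the bags along the subchain — rather than merely a constant sequence of indices in $[t]$ — and, in parallel, ruling out accidental comparabilities among the extracted $c$'s, $d$'s, $x$'s and $y$'s that would break the isomorphism with $\kelly_k$ (for instance one must secure $c_i\parallel d_i$ and $c_i\parallel x_j$ for $j>i$). Both should follow from the uniform label data on the subchain through \Cref{fact:L-sets} combined with the $E_h$-class structure: the labels of $x_j$ and $y_j$ at the bag nodes are constant in $j$ along the subchain, so any accidental comparability would have to hold uniformly and would thereby contradict the incomparability $x_j\parallel y_j$. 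The bookkeeping here — weaving together forward Ramseyan labels, convexity, and the $E_h$-class structure — is the heart of the argument; once it is carried out, \Cref{thm:Kelly-dim-bounded} follows.
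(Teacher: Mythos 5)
The first half of your proposal matches the paper: the same explicit NLC-decomposition from the tree decomposition (with labels $\bagbl{u}{x}=(\set{i\colon z^u_i\ge x},\set{i\colon z^u_i\le x})$), then \Cref{dimension} and \Cref{h-cross} to extract an $h$-chain and incomparable pairs $x_j,y_j$ with uniform label data. The divergence — and the gap — is in the last step, precisely where you flag uncertainty.

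Your idea is to fix a single pair of bag indices $(c^*,d^*)$ by coloring each $l$ with the indices witnessing $x_l\le y_{l+1}$ and $y_l\ge x_{l+1}$ inside $B(u_l)$ and taking a monochromatic subchain. This cannot give the required monotone chains $c_1\le\cdots\le c_{k-1}$. Having the \emph{same} coordinate $c^*$ at each node of the subchain says nothing about how $z^{u_{l_1}}_{c^*},z^{u_{l_2}}_{c^*},\dots$ compare in the poset: two bags that share a position index are in general unrelated, and the fact that each $z^{u_{l_j}}_{c^*}$ separately witnesses $x_{l_j}\le y_{l_j+1}$ does not transport along the chain. Moreover, with a subchain the consecutive anchors $y_{l_j+1}$ are no longer the relevant endpoints (you would need $x_{l_i}\le \cdots\le y_{l_{i+1}}$, but the witnessing element was chosen relative to $y_{l_i+1}$, which may lie strictly between $l_i$ and $l_{i+1}$). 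So the Ramsey step produces the wrong kind of uniformity.

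What the paper actually exploits — and what your sketch does not invoke — is the second conclusion of \Cref{h-cross}: $\lablr{x_j}{u_j}=\lablr{x_{j-1}}{u_{j-1}}\tau_j$ and $\labgr{y_j}{u_j}=\labgr{y_{j-1}}{u_{j-1}}\tau_j$, where the coimage is taken under $\tau_j=\bagbb{u_{j-1}}{u_j}$. Together with $\tau_j B=\set{\beta_j}$ (so $\beta_{j-1}^\le=\tau_j^\le\beta_j^\le$), this lets one pick indices $r_j\in\beta_{j+1}^\le$ \emph{inductively} so that $(r_{j-1},r_j)\in\tau_j^\le=\baglb{u_{j-1}}{u_j}$, and this membership \emph{is by definition} the statement $z^{u_{j-1}}_{r_{j-1}}\le z^{u_j}_{r_j}$; the same relation then pushes the witness $x_j\le z^{u_j}_{r_j}$ forward via \Cref{fact:L-sets-tw}. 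The index therefore must vary with $j$, and the comparability between consecutive bag elements is forced by $\tau_j^\le$, not by a common coordinate. With this, $\ell=k$ already suffices and no Ramsey/pigeonhole argument is needed. In short: replace the pigeonhole on a fixed $(c^*,d^*)$ with the inductive construction driven by conclusion (2) of \Cref{h-cross}, and the proof goes through.
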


The following simple fact provides a ``minimal'' set of (in)comparabilities to check in order to exhibit a Kelly example (cf.\ \Cref{fig:standard-and-Kelly}).

\begin{fact}\label{fact:Kelly}
If\/ $k\geq 3$ and elements\/ $a_1,\ldots,a_k,b_1,\ldots,b_k,c_1,\ldots,c_{k-1},d_1,\ldots,d_{k-1}$ of a poset are such that
\begin{enumerate}
\item\label{item:Kelly-1} $a_j\leq c_j\leq b_{j+1}$ and\/ $b_j\geq d_j\geq a_{j+1}$ for every\/ $j\in[k-1]$, and
\item\label{item:Kelly-2} $c_1\leq\cdots\leq c_{k-1}$ and\/ $d_1\geq\cdots\geq d_{k-1}$,
\item\label{item:Kelly-3} $a_j\nleq b_j$ for every\/ $j\in[k]$,
\end{enumerate}
then\/ $a_2,\ldots,a_{k-1},b_2,\ldots,b_{k-1},c_1,\ldots,c_{k-1},d_1,\ldots,d_{k-1}$ induce\/ $\kelly_k$ as a subposet.
\end{fact}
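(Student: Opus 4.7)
The plan is to verify, first, that every comparability claimed by $\kelly_k$ holds in the subposet (using hypotheses (1) and (2)), and second, that every non-comparability also holds, by showing that any violation would force some $a_l \leq b_l$ with $l \in [k]$, contradicting~(3).

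For the comparabilities, I would chain (1) and (2). For $i \leq j$, $a_i \leq c_i \leq c_j$; for $i < j$, $c_i \leq c_{j-1} \leq b_j$; symmetrically $a_i \leq d_{i-1} \leq d_j$ for $j < i$, and $d_i \leq d_j \leq b_j$ for $j \leq i$. Concatenating, we also obtain the standard-example comparabilities $a_i \leq b_j$ for every $i \neq j$ with $i,j \in [k]$: via $a_i \leq c_i \leq \cdots \leq c_{j-1} \leq b_j$ when $i < j$, or $a_i \leq d_{i-1} \leq \cdots \leq d_j \leq b_j$ when $i > j$.

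For the non-comparabilities, I would first establish four primary cases, each directly producing a forbidden $a_l \leq b_l$: \textbf{(a)}~$a_i \leq c_j$ with $i > j$ chains as $a_i \leq c_j \leq \cdots \leq c_{i-1} \leq b_i$; \textbf{(b)}~$c_i \leq b_j$ with $i \geq j$ chains as $a_j \leq c_j \leq c_i \leq b_j$; \textbf{(c)}~$a_i \leq d_j$ with $i \leq j$ chains as $a_i \leq d_j \leq \cdots \leq d_i \leq b_i$; \textbf{(d)}~$d_i \leq b_j$ with $i < j$ chains as $a_j \leq d_{j-1} \leq \cdots \leq d_i \leq b_j$. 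Every other unwanted relation reduces to one of these: $c_i \leq c_j$ with $i > j$ prepends $a_i \leq c_i$ to become (a); $d_i \leq d_j$ with $i < j$ appends $d_j \leq b_j$ to become (d); $c_i \leq d_j$ chains directly as $a_1 \leq c_1 \leq c_i \leq d_j \leq d_1 \leq b_1$; $d_i \leq c_j$ simultaneously forces $a_{i+1} \leq c_j$ and $d_i \leq b_{j+1}$, so (a) demands $i+1 \leq j$ while (d) demands $i \geq j+1$, impossible. The same-type cases $a_i \leq a_j$ and $b_i \leq b_j$ (for $i \neq j$) follow by appending (1): e.g.\ $a_i \leq a_j$ yields $a_i \leq c_j$ and $a_i \leq d_{j-1}$, so (a) forces $i \leq j$ while (c) forces $i \geq j$, a contradiction. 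Downward relations like $b_i \leq a_j$, $b_i \leq c_j$, $c_i \leq a_j$, or $d_i \leq a_j$ dispatch analogously by chaining via (1) until they become an instance of (a)--(d).

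The main obstacle is organizing this case enumeration so that every unwanted pair is covered without circularity: the secondary reductions must rely only on the four primary non-comparabilities (a)--(d) already established. Distinctness of the $4k-6$ named elements of the subposet then follows automatically, since any identification would yield a forbidden comparability.
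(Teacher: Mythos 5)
Your proof is correct and follows essentially the same approach as the paper's, which deduces the required comparabilities by transitivity from (1) and (2) and then shows every unwanted comparability among the $4k-6$ named elements forces some $a_j\leq b_j$, contradicting (3). The paper's proof is given only in sketch form with three illustrative cases; your systematic organization into the four primary reductions (a)--(d) plus secondary reductions makes the same argument precise and verifiably exhaustive, and your treatment of identifications (the $c_j=c_{j+1}$ type) at the end matches the paper's first illustrative case.
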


\begin{proof}
All comparabilities required by the definition of $\kelly_k$ follow from \ref{item:Kelly-1} and \ref{item:Kelly-2} by transitivity.
It is straightforward to verify that any further comparability among the elements $a_2,\ldots,a_{k-1},b_2,\ldots,b_{k-1},c_1,\ldots,c_{k-1},d_1,\ldots,d_{k-1}$ would imply $a_j\leq b_j$ for some $j\in[k]$, thus contradicting \ref{item:Kelly-3}.
For instance, we have the following implications:
\begin{itemize}
\item for $1\leq j\leq k-1$, if $c_j=c_{j+1}$, then $a_{j+1}\leq c_{j+1}=c_j\leq b_{j+1}$;
\item for $1\leq i<j\leq k-1$, if $a_j\leq c_i$, then $a_j\leq c_i\leq c_{j-1}\leq b_j$;
\item for $2\leq i<j\leq k-1$, if $a_i\leq a_j$ (in particular, when $c_i\leq a_j$), then $a_i\leq a_j\leq d_{j-1}\leq d_i\leq b_i$.
\end{itemize}
The other cases are analogous.
\end{proof}

We need the following simple regularization of tree decompositions.

\begin{fact}\label{fact:adjust}
Every graph\/ $(X,E)$ with treewidth\/ $t-1$ has a tree decomposition\/ $(T,B)$ of width\/ $t-1$ such that\/ $T$ is a binary tree, the leaf set of\/ $T$ is\/ $X$, and\/ $x\in B(x)$ for every\/ $x\in X$.
\end{fact}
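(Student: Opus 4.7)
The plan is to start from any tree decomposition $(T_0,B_0)$ of $(X,E)$ of width $t-1$ and modify it in three steps: first attach a new leaf for each element of $X$, then prune the superfluous old leaves, and finally binarize the result.

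In the first step, for each $x\in X$ I pick a node $u_x$ of $T_0$ with $x\in B_0(u_x)$ (which exists because $x$ must appear in some bag) and attach a fresh leaf $v_x$ to $u_x$, setting $B(v_x)=\{x\}$. Because $v_x$'s only neighbor $u_x$ already contains $x$, for every vertex of $X$ the set of nodes whose bag contains it remains a subtree; the new bags have size one; and the coverage of edges of $E$ is unaffected. Hence the enlarged tree $T_1$ is a tree decomposition of $(X,E)$ of width $t-1$, and all $v_x$ appear as leaves.

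In the second step I iteratively delete any leaf $w$ of the current tree that is not of the form $v_x$. The crux is the inclusion $B(w)\subseteq B(p)$, where $p$ is the parent of $w$: every $y\in B(w)$ has its companion $v_y$ attached at some $u_y\ne w$ (since $v_y$ is not a neighbor of $w$), so by the subtree property $y$ appears in every bag on the $w$-to-$u_y$ path, and in particular $y\in B(p)$. Hence $B(w)\subseteq B(p)$, which makes the deletion of $w$ safe: no vertex or edge loses coverage and the subtree of every vertex stays connected. After all such deletions the leaves are exactly $\{v_x : x\in X\}$, which I identify with $X$; by construction $x\in B(x)=\{x\}$ for each $x$.

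In the third step I binarize by a standard transformation: any internal node with more than two children is replaced by a binary tree of copies of itself all carrying the same bag (with the original children as the external edges); any resulting degree-one internal node is suppressed; and the root is chosen among the degree-two nodes, subdividing an edge with a fresh node of suitable bag if necessary to produce one. Using identical bags in the copies preserves the width and every tree decomposition axiom, and leaves the set of leaves untouched.

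The only non-routine step is the second one, and it rests entirely on the inclusion $B(w)\subseteq B(p)$ above; the other two steps are routine manipulations of tree decompositions.
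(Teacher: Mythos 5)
Your proof is correct, and it arrives at the result by a slightly different route than the paper. You augment the tree decomposition with one new leaf $v_x$ with bag $\{x\}$ per $x\in X$, then iteratively prune all other leaves while justifying each deletion via the inclusion $B(w)\subseteq B(p)$, and finally binarize. The paper instead maps each $x$ to the highest node $\phi(x)$ of the original tree whose bag contains $x$, restricts the tree to $\phi(X)$ (arguing that every discarded bag is contained in its closest surviving ancestor's bag), and then binarizes by locally replacing each node $u$ by a small binary tree in which $|\{x:\phi(x)=u\}|$ of the new nodes are declared to be the leaves named by those elements. Your leaves end up with singleton bags $B(x)=\{x\}$, whereas the paper's leaves inherit the bag of $\phi(x)$; both satisfy $x\in B(x)$, so nothing downstream is affected. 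Your key step---the inclusion $B(w)\subseteq B(p)$, proved by locating $v_y$ for each $y\in B(w)$ at $u_y\ne w$ and invoking connectivity of the subtree of nodes containing $y$---is sound; the only point one might spell out a touch more is that $u_y$ can never be pruned (it always retains the child $v_y$, which is never deleted) and hence $u_y\ne w$ indeed. The binarization step is standard and correctly preserves the leaf set. Overall, a valid alternative proof of roughly equal length and the same ingredients.
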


\begin{proof}
Let $(T',B')$ be an arbitrary tree decomposition of $(X,E)$ of width $t-1$.
For every $x\in X$, let $\phi(x)$ be the highest (i.e., minimal in the order $\downtoeq[T']$) node of $T'$ such that $x\in B'(\phi(x))$.
Let $T''$ be the rooted tree on the image $\phi(X)$ such that the tree order $\downtoeq[T'']$ is induced from the tree order $\downtoeq[T']$.
For every node $v$ of $T'$, the closest ancestor $u$ of $v$ in $T'$ that is also a node of $T''$ satisfies $B'(v)\subseteq B'(u)$.
Therefore, $(T'',B'|_{\phi(X)})$ is also a tree decomposition of $(X,E)$ of width $t-1$.

Now, to obtain a binary tree $T$, perform the following replacement for every node $u$ of $T''$.
Let $X_u=\set{x\in X\colon\phi(x)=u}$, and let $\ell$ be the number of children of $u$ in $T''$.
Replace $u$ by an arbitrary binary tree on $\ell-1+\size{X_u}$ nodes, and reconnect the original children of $u$ to that tree so that $\ell-1$ of the new nodes get exactly two children and $\size{X_u}$ of the new nodes get no children (i.e., they are becoming leaves).
Identify the $\size{X_u}$ new leaves arbitrarily with the elements of $X_u$.
Assign a bag $B(v)=B'(u)$ to every node $v$ obtained from $u$ by this replacement; in particular $B(x)=B'(\phi(x))\ni x$ for every $x\in X$.
This gives rise to a tree decomposition $(T,B)$ of $(X,E)$ of width $t-1$ which satisfies the desired conditions.
\end{proof}

The remainder of this section is devoted to the proof of \Cref{thm:Kelly-dim-bounded}.
We fix integers $t$ and $k$, a poset $(X,\leq)$, and a tree decomposition $(T,B)$ of the cover graph of $(X,\leq)$ of width $t-1$ provided by \Cref{fact:adjust}.
In particular, $T$ is a binary tree, the leaf set of $T$ is $X$, and $x\in B(x)$ for every $x\in X$.
We also assume that $\size{X}\geq 2$.

First, we construct an NLC-decomposition of $(X,\leq)$ of width $4^t$.
For every node $u$ of $T$, we enumerate $B(u)$ arbitrarily as
\[B(u)=\set{z^u_1,\ldots,z^u_t}{;}\]
in case $\size{B(u)}<t$, we just repeat some elements in the enumeration.
We define the label set $Q$ of the aimed NLC-decomposition as follows:
\[Q=2^{[t]}\times 2^{[t]}{.}\]
For each $\pi\in Q$, we let $\pi^\geq$ and $\pi^\leq$ denote the two coordinates of $\pi$, so that $\pi=(\pi^\geq,\pi^\leq)$ with $\pi^\geq,\pi^\leq\subseteq[t]$.
For every $x\in X$, the initial label of $x$ is
\[\initial{x}=\bigl(\set{i\in[t]\colon z^x_i\geq x},\;\set{i\in[t]\colon z^x_i\leq x}\bigr){.}\]
Observe that a pair of relations $\sigma=(\sigma^\geq,\sigma^\leq)$ with $\sigma^\geq,\sigma^\leq\subseteq[t]\times[t]$ can be understood as a reverse function $\sigma\colon Q\from Q$ the application of which on a label $\pi=(\pi^\geq,\pi^\leq)\in Q$ is the coordinatewise image: $\sigma\pi=(\sigma^\geq\pi^\geq,\sigma^\leq\pi^\leq)$.
With this understanding, for nodes $u\downto v$ of $T$, we define the relabeling $\bagbb{u}{v}=(\baggb{u}{v},\baglb{u}{v})$ as follows:
\[\baggb{u}{v}=\set{(i,j)\in[t]\times[t]\colon z^u_i\geq z^v_j}{,} \qquad \baglb{u}{v}=\set{(i,j)\in[t]\times[t]\colon z^u_i\leq z^v_j}{.}\]
It follows from \Cref{fact:convexity} that $u\downto v\downto w$ entails $\bagbb{u}{w}=\bagbb{u}{v}\bagbb{v}{w}$.
Furthermore, for every node $u$ of $T$ and every $x\in X(u)$, the label of $x$ at $u$ is $\bagbl{u}{x}=(\baggl{u}{x},\bagll{u}{x})$, where
\[\baggl{u}{x}=\set{i\in[t]\colon z^u_i\geq x}{,} \qquad \bagll{u}{x}=\set{i\in[t]\colon z^u_i\leq x}{.}\]
Finally, for every node $u$ of $T$, we define the status relations
\[R(u)=\set{(\alpha,\beta)\in Q\times Q\colon\alpha^\geq\cap\beta^\leq\neq\emptyset}{,} \qquad R'(u)=\set{(\alpha,\beta)\in Q\times Q\colon\alpha^\leq\cap\beta^\geq\neq\emptyset}{.}\]
That these status relations satisfy the conditions stated in the definition of NLC-decomposition follows again from \Cref{fact:convexity}.
We have thus obtained an NLC-decomposition $(T,Q,\eta,\rho,R,R')$ of $(X,\leq)$ of width $\size{Q}=4^t$.

While in the previous section, we were considering posets with a ``generic'' NLC-decomposition of bounded width, here we will apply the notation and results of the previous section (in particular, \Cref{dimension} and \Cref{h-cross}) to the specific NLC-decomposition $(T,Q,\eta,\rho,R,R')$ constructed above.
In this context, the sets $\lablr{x}{v}$ and $\labgr{x}{v}$ defined in the previous section have the following form.

\begin{fact}\label{fact:L-sets-tw}
For every node\/ $v$ of\/ $T$ and for every\/ $x\in X\setminus X(v)$, we have
\begin{align*}
\lablr{x}{v} &= \set{\pi=(\pi^\geq,\pi^\leq)\in Q\colon\text{there is }j\in\pi^\leq\text{ such that }x\leq z^v_j}\text{,} \\
\labgr{x}{v} &= \set{\pi=(\pi^\geq,\pi^\leq)\in Q\colon\text{there is }j\in\pi^\geq\text{ such that }x\geq z^v_j}\text{.}
\end{align*}
\end{fact}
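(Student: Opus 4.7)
The plan is to unfold the definitions of $\lablr{x}{v}$ and $\labgr{x}{v}$ introduced in \Cref{sec:dim-bnd} using the specific NLC-decomposition constructed above, and then to prove each equality by a direct double inclusion. The two equalities are perfectly symmetric under interchanging the roles of $\leq$ and $\geq$, so I focus on $\lablr{x}{v}$ and briefly indicate the parallel argument for $\labgr{x}{v}$ at the end.

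Let $u$ be the lowest common ancestor of $x$ and $v$ in $T$, and first assume that $x$ is a left descendant and $v$ is a right descendant of $u$. By the definition of $\lablr{x}{v}$ and the specific form of the status relation $R(u)$ and of the relabeling $\bagbb{u}{v}$, for $\pi=(\pi^\geq,\pi^\leq)\in Q$ one has $\pi\in\lablr{x}{v}$ if and only if $\baggl{u}{x}\cap\baglb{u}{v}\pi^\leq$ is nonempty. Unfolding the definitions of $\baggl{u}{x}$ and $\baglb{u}{v}$, this is in turn equivalent to the existence of indices $i,j\in[t]$ with $j\in\pi^\leq$, $z^u_i\geq x$, and $z^u_i\leq z^v_j$. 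The goal is then to show that this is equivalent to the existence of some $j\in\pi^\leq$ with $x\leq z^v_j$.

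The forward implication is immediate from transitivity: $x\leq z^u_i\leq z^v_j$ yields $x\leq z^v_j$. The reverse implication is the only step that is not routine bookkeeping, and it is where the bag structure is used in an essential way. Given $j\in\pi^\leq$ with $x\leq z^v_j$, I need to produce an $i\in[t]$ with $z^u_i\geq x$ and $z^u_i\leq z^v_j$. Since $x\in B(x)$, $z^v_j\in B(v)$, and $u$ lies on the $x$-to-$v$ path in $T$ (as $u$ is their common ancestor and $x\notin X(v)$), \Cref{fact:convexity} provides some $z\in B(u)$ with $x\leq z\leq z^v_j$; writing $z=z^u_i$ for the appropriate $i\in[t]$ finishes the case.

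It remains to handle the symmetric case where $x$ is a right descendant and $v$ is a left descendant of $u$. Here the definition of $\lablr{x}{v}$ uses $R'(u)$ with the arguments swapped, but a direct inspection of the definitions of $R$ and $R'$ shows this yields exactly the same condition $\baggl{u}{x}\cap\baglb{u}{v}\pi^\leq\neq\emptyset$, so the argument transfers verbatim. The equality for $\labgr{x}{v}$ is proved by an entirely analogous chain of reductions, with $\bagll{u}{x}\cap\baggb{u}{v}\pi^\geq\neq\emptyset$ as the unfolded condition and the same appeal to \Cref{fact:convexity} on a witness $j\in\pi^\geq$ with $x\geq z^v_j$.
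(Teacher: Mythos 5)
Your proof is correct and follows essentially the same route as the paper's: you unfold $\lablr{x}{v}$ using the definitions of $R(u)$, $\bagbb{u}{v}$, and $\baggl{u}{x}$ to reach the condition $\baggl{u}{x}\cap\baglb{u}{v}\pi^\leq\neq\emptyset$, and then collapse the intermediate bag via \Cref{fact:convexity}. The only stylistic difference is that you argue the final equivalence by explicit double inclusion whereas the paper writes it as a single chain of set equalities; both rely on exactly the same application of \Cref{fact:convexity} and the same observation that the $R'(u)$ case produces an identical condition.
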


\begin{proof}
Let $u$ be the lowest common ancestor of $x$ and $v$ in $T$.
Suppose that $x$ is a left descendant and $v$ is a right descendant of $u$.
By definition, we have
\begin{align*}
\lablr{x}{v} &= \set{\pi=(\pi^\geq,\pi^\leq)\in Q\colon(\bagbl{u}{x},\bagbb{u}{v}\pi)\in R(u)} \\
&= \set{\pi=(\pi^\geq,\pi^\leq)\in Q\colon\baggl{u}{x}\cap\baglb{u}{v}\pi^\leq\neq\emptyset} \\
&= \set{\pi=(\pi^\geq,\pi^\leq)\in Q\colon\text{there are }i,j\in[t]\text{ such that }x\leq z^u_i\leq z^v_j\text{ and }j\in\pi^\leq} \\
&= \set{\pi=(\pi^\geq,\pi^\leq)\in Q\colon\text{there is }j\in\pi^\leq\text{ such that }x\leq z^v_j}{,}
\end{align*}
where the last equality follows from \Cref{fact:convexity}.
The other cases are analogous.
\end{proof}

Now, we proceed with the proof of \Cref{thm:Kelly-dim-bounded}.
Let $d$ be the constant claimed in \Cref{dimension} applied in the context of $s$ for $\ell=k$.
Suppose that $\dim(X,\leq)>d$.
By \Cref{dimension}, there is an $h$-chain $u_0\downto\cdots\downto u_k$ of length $k$ and an $h$-cross.
Let $A$ and $B$ be $E_h$-classes provided by \Cref{h-cross}.
For each $j\in[k]$, let $\tau_j=\bagbb{u_{j-1}}{u_j}$, and let $\alpha_j\in A$ and $\beta_j\in B$ be such that $\tau_jA=\set{\alpha_j}$ and $\tau_jB=\set{\beta_j}$.
By \Cref{h-cross}, there are incomparable pairs $x_j,y_j\in X(u_{j-1})\setminus X(u_j)$ for $j\in[k]$ such that the following conditions hold:
\begin{enumerate}
\item for every $j\in[k]$, we have
\[\bagbl{u_{j-1}}{x_j}=\alpha_j{,} \quad \bagbl{u_{j-1}}{y_j}=\beta_j{,} \quad A\subseteq\labgr{y_j}{u_j}{,} \quad\text{and}\quad B\subseteq\lablr{x_j}{u_j}{;}\]
\item for every $j\in[k]\setminus{1}$, we have
\[\labgr{y_j}{u_j}=\labgr{y_{j-1}}{u_{j-1}}\tau_j \quad\text{and}\quad \lablr{x_j}{u_j}=\lablr{x_{j-1}}{u_{j-1}}\tau_j{.}\]
\end{enumerate}

We aim at using \Cref{fact:Kelly} with $x_1,y_1,\ldots,x_k,y_k$ playing the role of $a_1,b_1,\ldots,a_k,b_k$ to exhibit $\kelly_k$ as a subposet of $(X,\leq)$.
To this end, we are going to find indices $r_j\in\beta_{j+1}^\leq$ and $s_j\in\alpha_{j+1}^\geq$ for $j\in[k-1]$ with the following properties, corresponding to conditions \ref{item:Kelly-1} and \ref{item:Kelly-2} in \Cref{fact:Kelly}:
\begin{enumerate}
\item $x_j\leq z^{u_j}_{r_j}\leq y_{j+1}$ and $y_j\geq z^{u_j}_{s_j}\geq x_{j+1}$ for every $j\in[k-1]$, and
\item $z^{u_1}_{r_1}\leq\cdots\leq z^{u_{k-1}}_{r_{k-1}}$ and $z^{u_1}_{s_1}\geq\cdots\geq z^{u_{k-1}}_{s_{k-1}}$.
\end{enumerate}
Here, the definitions of $\beta_{j+1}^\leq$ and $\alpha_{j+1}^\geq$ already imply $z^{u_j}_{r_j}\leq y_{j+1}$ and $z^{u_j}_{s_j}\geq x_{j+1}$ for $j\in[k-1]$ for all choices of $r_j\in\beta_{j+1}^\leq$ and $s_j\in\alpha_{j+1}^\geq$.

We find suitable indices $r_j$ and $s_j$ by induction on $j$.
Since $\beta_2\in B\subseteq\lablr{x_1}{u_1}$, it follows from \Cref{fact:L-sets-tw} that there is $r_1\in\beta_2^\leq=\bagll{u_1}{y_2}$ such that $x_1\leq z^{u_1}_{r_1}$.
Analogously, there is $s_1\in\alpha_2^\geq$ such that $y_1\geq z^{u_1}_{s_1}$.

Now, let $j\in\set{2,\ldots,k-1}$, and suppose we have found suitable indices $r_{j-1}\in\beta_j^\leq$ and $s_{j-1}\in\alpha_j^\geq$.
Since $\beta_j^\leq=\tau_j^\leq\beta_{j+1}^\leq$, there is $r_j\in\beta_{j+1}^\leq$ such that $(r_{j-1},r_j)\in\tau_j^\leq=\baglb{u_{j-1}}{u_j}$, which means (by definition) that $z^{u_{j-1}}_{r_{j-1}}\leq z^{u_j}_{r_j}$.
This also entails $r_{j-1}\in\tau_j^\leq\set{r_j}$.
Consequently, by \Cref{fact:L-sets-tw} and the assumption that $x_{j-1}\leq z^{u_{j-1}}_{r_{j-1}}$, we have $\tau_j(\emptyset,\set{r_j})=(\emptyset,\tau_j^\leq\set{r_j})\in\lablr{x_{j-1}}{u_{j-1}}$.
This and the fact that $\lablr{x_{j-1}}{u_{j-1}}\tau_j=\lablr{x_j}{u_j}$ yield $(\emptyset,\set{r_j})\in\lablr{x_j}{u_j}$, which entails $x_j\leq z^{u_j}_{r_j}$ by \Cref{fact:L-sets-tw}.
Analogously, there is $s_j\in\alpha_{j+1}^\geq$ such that $z^{u_{j-1}}_{s_{j-1}}\geq z^{u_j}_{s_j}$ and $y_j\geq z^{u_j}_{s_j}$.
We have found requested indices $r_j$ and $s_j$, completing the induction step and thus the construction of $\kelly_k$ as a subposet of $(X,\leq)$.

\section{Excluding a minor}\label{sec:minors}

In this section, we prove \Cref{thm:excluding-a-minor}.

We use the following convenient notation for paths.
Let $F$ be a forest and let $u$ and $v$ be two vertices in the same component of $F$.
We let $F[u,v]$ denote the unique path between $u$ and $v$ in $F$, including both $u$ and $v$, and we let $F(u,v)=F[u,v]-\set{u,v}$.
We also use the following mixed versions of this notation: $F[u,v)=F[u,v]-\set{v}$ and $F(u,v]=F[u,v]-\set{u}$.

\begin{lemma}
\label{lem:Kelly_poset_to_minor}
For every integer\/ $k\geq 3$, there is an integer\/ $N=N(k)\geq 3$ such that for every poset\/ $P$ containing\/ $\kelly_N$ as a subposet, the cover graph of\/ $P$ contains the cover graph of\/ $\kelly_k$ as a minor.
\end{lemma}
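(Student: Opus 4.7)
The approach is to combine a direct construction of a minor model with the Excluded Grid Theorem, following the sketch in the overview. Choose $N=N(k)$ as a sufficiently large Ramsey-type function of $k$, fix a copy of $\kelly_N$ inside $P$ with elements $\set{a_i,b_i,c_i,d_i}$, and denote the cover graph of $P$ by $G$. The plan is to show that either the witnessing paths for the comparabilities of $\kelly_N$ already sketch a minor model of the cover graph of $\kelly_k$, or else they force $G$ to have such large treewidth that the Excluded Grid Theorem yields the minor (using that the cover graph of $\kelly_k$ is planar with pathwidth at most $3$, hence a minor of a sufficiently large grid).

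I would first assemble two ``spines''. For each cover step of the chain $c_1<c_2<\cdots<c_{N-1}$, fix a saturated chain in $P$ witnessing it; concatenating these yields a simple path $P_c$ in $G$ that visits $c_1,\ldots,c_{N-1}$ in order (two consecutive saturated chains can only overlap at their shared $c_i$, since any deeper overlap would collapse two distinct $c_j$'s). Build $P_d$ analogously. The two spines are vertex-disjoint: a shared vertex $z$ would yield $c_i\leq z\leq d_{j+1}$ or $d_j\leq z\leq c_{i+1}$, hence a comparability between a $c$ and a $d$ in $P$ and so in $\kelly_N$, which is impossible. Next, for each $i$ fix ``side paths'' in $G$ witnessing $a_i\leq c_i$, $c_i\leq b_{i+1}$, $a_{i+1}\leq d_i$, and $d_i\leq b_i$.

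Then I would apply Ramsey's theorem (possibly iteratively) to uniformise the intersection pattern: colour each unordered pair $\set{i,j}\subseteq[N-1]$ by a ``type'' recording a bounded amount of information about how the side paths attached to $i$ and $j$ intersect one another and the two spines. For $N$ Ramsey-large enough, this produces a monochromatic subset $I\subseteq[N-1]$ of any prescribed size $M=M(k)$. Many intersection types are already ruled out by $\kelly_N$: for example, the side paths $a_i\to c_i$ and $a_j\to c_j$ for distinct $i,j\in I$ must be disjoint, since a common vertex would force both $a_i\leq c_j$ and $a_j\leq c_i$ in $\kelly_N$.

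Finally I would run a dichotomy on the uniform pattern. In the \emph{uncrossed case}, the surviving intersections are sparse enough that, selecting $k-1$ widely spaced indices from $I$, one can carve out disjoint subpaths of $P_c$ and $P_d$ to serve as the connected bags for the $c_\ell$'s and $d_\ell$'s of $\kelly_k$, absorb the side paths into connected bags for the $a_\ell$'s and $b_\ell$'s, and check the required adjacencies, obtaining a minor model of the cover graph of $\kelly_k$ directly. In the \emph{tangled case}, the forced overlaps among side paths and spines across many indices of $I$ yield a family of pairwise touching connected subgraphs of $G$---a bramble---of order $\Omega(M)$; hence $G$ has treewidth $\Omega(M)$, so the Excluded Grid Theorem produces an arbitrarily large grid minor of $G$, which contains the cover graph of $\kelly_k$. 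I expect the main obstacle to be the bramble construction in the tangled case: one has to show that the uniform intersection type really does guarantee honest pairwise touches among a large family of subgraphs, and then calibrate $M$ (and hence $N$) so that the forced bramble is large enough to force a grid that in turn contains the cover graph of $\kelly_k$.
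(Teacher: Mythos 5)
Your proposal follows essentially the same approach as the paper's proof: fix the covering chains for the $c$-spine and $d$-spine and the four families of side paths $G[a_i,c_i]$, $G[a_i,d_{i-1}]$, $G[c_{i-1},b_i]$, $G[d_i,b_i]$; Ramsey-colour pairs of indices by the intersection type of their side-path bundles (the paper uses one ``disjoint'' colour plus $4\times 4$ crossing colours); then either build the minor model directly from a monochromatic set of the disjoint colour, or else assemble a bramble of order $\Omega(t)$ from pairwise-crossing side paths and invoke the Grid Minor Theorem. The bramble step you flag as the main concern is handled cleanly in the paper by taking bramble elements to be unions of one crossing side path from each half of the monochromatic set, so any two elements intersect via the cross pairing and a hitting set of size $\leq t$ must miss one path from each half.
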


\begin{proof}
Fix a positive integer $k\geq 3$.
Let $P$ be a poset containing $\kelly_N$ as a subposet, where $N$ is a large enough constant to be specified later.
We denote the elements of $P$ inducing $\kelly_N$ by $a_2,\ldots,a_{N-1},b_2,\ldots,b_{N-1},c_1,\ldots,c_{N-1},d_1,\ldots,d_{N-1}$, in such a way that they satisfy the inequalities from the definition in~\Cref{sec:intro}.

Let $G$ be the cover graph of $P$.
It will be convenient to adapt the notations for paths introduced before the lemma to the setting of cover graphs as follows: for any two elements $u$ and $v$ with $u<v$ in $P$, fix arbitrarily one covering chain $u=z_1<\cdots<z_m=v$ in $P$ for some $m\geq 2$.
This chain corresponds to a path in $G$, which we denote by $G[u,v]$.
(The variants $G(u,v)$, $G[u,v)$, and $G(u,v]$ are defined naturally.)
Note that this path is naturally ordered from $u$ to $v$, which will be used implicitly in the proof.

The Grid Minor Theorem of Robertson and Seymour~\cite{RS86} implies that graphs excluding a fixed planar graph $H$ as a minor have treewidth bounded by some constant depending on $\size{V(H)}$.
Thus, since the cover graph of $\kelly_k$ is planar, there is a constant $t$ depending only on $k$ such that every graph with treewidth at least $t$ contains the cover graph of $\kelly_k$ as a minor.

Let $I=\set{2,3,\ldots,N-1}$.
It can be verified that each of the sets
\[\set{G[a_i,c_i]\colon i\in I}{,} \qquad \set{G[c_{i-1},b_i]\colon i\in I}{,} \qquad \set{G[a_i,d_{i-1}]\colon i\in I}{,} \qquad \set{G[d_i,b_i]\colon i\in I}\]
consists of pairwise vertex-disjoint paths in $G$, as otherwise it would create a non-existent relation in $\kelly_N$.
For each $i\in I$, let
\begin{align*}
Z_{i,1} &= G[a_i,c_i]{,} &
Z_{i,2} &= G[a_i,d_{i-1}]{,} &
Z_{i,3} &= G[c_{i-1},b_i]{,} &
Z_{i,4} &= G[d_i,b_i]{.}\\
\intertext{Also let}
H_1 &= \bigcup_{i\in I} Z_{i,1}{,} &
H_2 &= \bigcup_{i\in I} Z_{i,2}{,} &
H_3 &= \bigcup_{i\in I} Z_{i,3}{,} &
H_4 &= \bigcup_{i\in I} Z_{i,4}{.}
\end{align*}
Note that $H_\alpha$ is a forest (collection of paths) for each $\alpha\in[4]$.
For each $i\in I$, let
\[Z_i = \bigcup_{\alpha\in[4]}Z_{i,\alpha}{.}\]

Consider the complete graph $K_{\size{I}}$ having $I$ as the vertex set.
Let $\phi$ be the coloring of the edges of $K_{\size{I}}$ defined as follows.
For any $i,j\in I$ with $i<j$,
\[\phi(\set{i,j}) = \begin{cases}
0&\text{if }Z_i\text{ and }Z_j\text{ are vertex disjoint; and}\\
(\alpha,\beta)&\parbox[t]{273pt}{otherwise, where $\alpha,\beta\in[4]$ are chosen in such a way that \\
$V(Z_{i,\alpha})\cap V(Z_{j,\beta})\neq\emptyset$.}
\end{cases}\]
Note that $\phi$ uses at most $17$ colors.
By Ramsey's Theorem~\cite{Ramsey30}, if $N$ is chosen large enough as a function of $t$, there is a set $J\subseteq I$ such that $\phi$ uses the same color for all $\set{i,j}$ with
$i,j\in J$ and $i<j$, and such that $J$ has size $k$ if that color is $0$, and size $2t+2$ otherwise.

Fix such a monochromatic set $J\subseteq I$.
First, suppose that the color on all the edges in $J$ is $0$.
In this case, we aim to build a model of the cover graph of $\kelly_k$ in $G$.
Let $J=\set{j_1,\ldots,j_k}$ with $j_1<\cdots<j_k$.

Recall that $c_1<\cdots<c_{N-1}$ and $d_1\cdots<d_{N-1}$ in $\kelly_N$.
Let $C$ be the path in $G$ obtained from the concatenation of the paths $G[c_1,c_2],\ldots,G[c_{N-2},c_{N-1}]$.
(Observe the concatenation is indeed a path, since $c_1<\cdots<c_{N-1}$ in $P$.)
Let $D$ be the path in $G$ obtained from the concatenation of the paths $G[d_1,d_2],\ldots,G[d_{N-2},d_{N-1}]$.
Note that paths $C$ and $D$ are vertex-disjoint, as any intersection between them would imply a non-existent relation in $\kelly_N$.

For each $i\in I$, let $a'_i$ be the first vertex of $G[a_i,c_i]$ that lies on $C$, and let $b'_i$ be the last vertex of $G[c_{i-1},b_i]$ that lies on $C$.
We claim that
\begin{equation}
\label{eq:c_i-1__c_i}
c_{i-1}\leq b'_i<a'_i\leq c_i\text{ in }P \quad \text{for all }i\in I{.}
\end{equation}
Indeed, the first inequality follows from the fact that
$c_{i-1}$ lies in $G[c_{i-1},b_i]$.
The second inequality follows as otherwise we would have $a'_i\leq b'_i$ in $P$ and hence $a_i\leq a'_i\leq b'_i\leq b_i$ in $P$, a contradiction.
The third inequality follows from the fact that $c_i$ lies in $G[a_i,c_i]$.
Similarly, we define for each $i\in I$, $a''_i$ to be the first vertex of $G[a_i,d_{i-1}]$ that lies on $D$, and $b''_i$ to be the last vertex of $G[d_i,b_i]$ that lies on $D$.
We claim that
\begin{equation}
\label{eq:d_i_plus_1__d_i}
d_i\leq b''_i<a''_i\leq d_{i-1}\text{ in }P \quad \text{for all }i\in I{.}
\end{equation}
The proof is symmetric to the one above.
Finally, we remark that $a_i\neq a'_i$ for each $i\in I$, and thus $a_i<a'_i$ in $P$.
This is because, if $a_i=a'_i$, then we would have $a_{i-1}<c_{i-1}\leq a'_i=a_i$ in $P$, a contradiction.
By a symmetric argument, we have $a_i\neq a''_i$, $b_i\neq b'_i$, and $b_i\neq b''_i$ for each $i\in I$.

Now, we build a model $\cgM$ of the cover graph of $\kelly_k$ in $G$.
The model $\cgM$ is a collection of connected subgraphs $M_x$ of $G$, one for each element $x$ of $\kelly_k$, defined as follows:
\[M_x = \begin{cases}
H_1[a_{j_i},a'_{j_i})\cup H_2[a_{j_i},a''_{j_i}) & \text{if }x=a_i\text{ and }i\in\set{2,\ldots,k-1}{,} \\
H_3[b_{j_i},b'_{j_i})\cup H_4[b_{j_i},b''_{j_i}) & \text{if }x=b_i\text{ and }i\in\set{2,\ldots,k-1}{,} \\
C[a'_{j_i},a'_{j_{i+1}}) & \text{if }x=c_i\text{ and }i\in\set{1,\ldots,k-1}{,} \\
D[a''_{j_{i+1}},a''_{j_i}) & \text{if }x=d_i\text{ and }i\in\set{1,\ldots,k-1}{.}
\end{cases}\]

It remains to show that $\cgM$ is indeed a model of the cover graph of $\kelly_k$ in $G$.
To prove this, we need to show that the subgraphs $M_x$ defined above are non-empty and connected, that they are pairwise vertex-disjoint, and finally, that the necessary edges between them exist in $G$ to obtain the desired model.
The first property holds, since each $M_x$ is either a non-empty path, or the union of two non-empty paths sharing an endpoint.
(The paths are non-empty since $a_i\neq a'_i$, $a_i\neq a''_i$, $b_i\neq b'_i$, and $b_i\neq b''_i$ for each $i\in I$, and since $a'_2<a'_3<\cdots<a'_{N-1}$ in $P$ and $a''_{N-1}<a''_{N-2}<\cdots<a''_2$ in $P$.)
That the subgraphs $M_x$ defined in the first two cases above are pairwise vertex-disjoint follows from the fact that the edges in $J$ are colored $0$.
That the subgraphs $M_x$ defined in the last two cases are vertex-disjoint follows from the vertex-disjointness of paths $C$ and $D$.
Moreover, given the definitions of $a'_i$, $a''_i$, $b'_i$, and $b''_i$ for $i\in I$, it follows that each subgraph $M_x$ defined in one of the first two cases above avoids both $C$ and $D$, and thus $M_x$ is vertex-disjoint from every subgraph defined in the last two cases.
Finally, let us check that the necessary edges connecting distinct subgraphs $M_x$ exist in $G$, namely:
\begin{itemize}
\item an edge between $H_1[a_{j_i},a'_{j_i})\cup H_2[a_{j_i},a''_{j_i})$ and $C[a'_{j_i},a'_{j_{i+1}})$ for $i\in\set{2,\ldots,k-1}$;
\item an edge between $H_1[a_{j_i},a'_{j_i})\cup H_2[a_{j_i},a''_{j_i})$ and $D[a''_{j_i},a''_{j_{i-1}})$ for $i\in\set{2,\ldots,k-1}$;
\item an edge between $H_3[b_{j_i},b'_{j_i})\cup H_4[b_{j_i},b''_{j_i})$ and $C[a'_{j_{i-1}},a'_{j_i})$ for $i\in\set{2,\ldots,k-1}$;
\item an edge between $H_3[b_{j_i},b'_{j_i})\cup H_4[b_{j_i},b''_{j_i})$ and $D[a''_{j_{i+1}},a''_{j_i})$ for $i\in\set{2,\ldots,k-1}$;
\item an edge between $C[a'_{j_i},a'_{j_{i+1}})$ and $C[a'_{j_{i+1}},a'_{j_{i+2}})$ for $i\in\set{1,\ldots,k-2}$;
\item an edge between $D[a''_{j_{i+2}},a''_{j_{i+1}})$ and $D[a''_{j_{i+1}},a''_{j_i})$ for $i\in\set{1,\ldots,k-2}$.
\end{itemize}
In the first two cases and last two cases, the required edge clearly exists in $G$ given the definitions of the two subgraphs under consideration.
In the remaining two cases, the desired edge exists as well: this follows from the fact that, for each $i\in I$, the path $C[a'_{i-1},a'_i)$ contains $b'_i$ (as follows from \eqref{eq:c_i-1__c_i}) and the path $D[a''_i,a''_{i-1})$ contains $b''_{i-1}$ (as follows from \eqref{eq:d_i_plus_1__d_i}).
Therefore, $\cgM$ is a model of the cover graph of $\kelly_k$ in $G$, as desired.

Next, assume that the edges of $J$ are not colored $0$, and let $(\alpha,\beta)$ be their color.
Recall that $\size{J}=2t+2$ in this case.
Let $U$ be the first $t+1$ integers in $J$, and let $V=J-U$.
Thus, $V(Z_{i,\alpha})\cap V(Z_{j,\beta})\neq\emptyset$ for every $i\in U$ and $j\in V$.
Hence, there exist a set $S_\alpha$ of $t+1$ components (which are paths) of $H_\alpha$ and a set $S_\beta$ of $t+1$ components of $H_\beta$ (also paths) such that each path in $S_\alpha$ intersects each path in $S_\beta$.
Let $\cgB$ denote the collection of the subgraphs of $G$ consisting of the union of one path from $S_\alpha$ and one from $S_\beta$.
Observe that every subgraph in $\cgB$ is connected, and any two subgraphs in $\cgB$ have at least one vertex in common.
Thus, in the terminology of graph minors, $\cgB$ is a \emph{bramble}.
If $X$ is a vertex subset of $G$ of size at most $t$, then $X$ misses some path in $S_\alpha$ and some path in $S_\beta$, and thus $X$ misses their union, which is a member of~$\cgB$.
We deduce that every vertex subset of $G$ hitting all members of $\cgB$ has size at least $t+1$.
By a standard lemma on brambles (see e.g.~\cite{ST93}), this shows that $G$ has treewidth at least $t$.
Therefore, by the Grid Minor Theorem, $G$ contains the cover graph of $\kelly_k$ as a minor.
\end{proof}

We can now prove \Cref{thm:excluding-a-minor}.

\begin{proof}[Proof of \Cref{thm:excluding-a-minor}]
Let $\cgC$ be a minor-closed class of graphs.
If $\cgC$ contains the cover graphs of all Kelly examples, then posets with cover graphs in $\cgC$ have unbounded dimension.
Now, suppose that $\cgC$ excludes the cover graph of $\kelly_k$ for some $k\geq 3$.
It remains to show that posets with cover graphs in $\cgC$ have dimension bounded by a function of $k$.
Let $P$ be such a poset, and let $G$ be its cover graph.
Since the cover graph of $\kelly_k$ is planar and $G$ excludes it as a minor, by the Grid Minor Theorem, we may assume that $G$ has treewidth less than $t$ for some constant $t=t(k)$.
By \Cref{lem:Kelly_poset_to_minor}, $P$ does not contain $\kelly_N$ as a subposet, where $N=N(k)$ is the constant from that lemma.
Now, by \Cref{thm:Kelly-dim-bounded-main}, $P$ has dimension at most $f(t, N)$, where $f$ is the function from that theorem.
\end{proof}

\section{Boolean dimension}\label{sec:bdim}

In this section, we prove our results about Boolean dimension of posets of bounded NLC-width and with cover graphs of bounded treewidth. Before we proceed to these results, we first introduce some auxiliary tools about encoding semigroup-labeled trees through linear orders. These tools already contain the combinatorial core of the reasoning: Colcombet's Theorem.

\subsection*{Encoding and decoding trees}

We now present two lemmas about implementing various queries in trees using linear orders. We will use the following common definition. For a tree $S$ with leaf set $Z$, a linear order $\preceq$ on $Z$ is \emph{consistent} with $S$ if for every node $u$ of $S$, the set $Z(u)=\{z\in Z\colon z$ is a descendant of $u$ in $S\}$ is a contiguous interval of $\preceq$. Also, recall that for an assertion $\psi$, we write $[\psi]$ for the Boolean value of $\psi$: $[\psi]=1$ if $\psi$ holds, and $[\psi]=0$ otherwise.

The first lemma is about implementing basic navigation in a tree. In essence, the argument is the same as in~\cite[Color Detection]{FMM20}.

\begin{lemma}\label{bdim-aux-tree}
Let\/ $C$ be a finite set of colors and\/ $k=2\lceil\log_2\size{C}\rceil$.
Then there exists a function\/ $\dir\colon\set{0,1}^{k+1}\to C^2$ such that the following holds.
For every tree\/ $S$ with leaf set\/ $Z$, every linear order\/ $\refpreceq$ on\/ $Z$ consistent with\/ $S$, and every coloring\/ $\phi$ of non-root nodes of\/ $S$ with colors from\/ $C$, there exist linear orders\/ $\preceq^\dir_1,\ldots,\preceq^\dir_k$ such that for all distinct\/ $x,y\in Z$, we have
\[\dir\bigl([x\refpreceq y],\:[x\preceq^\dir_1y],\:\ldots,\:[x\preceq^\dir_ky]\bigr)=(\phi(v),\phi(w))\text{,}\]
where\/ $v$ and\/ $w$ are the children of the lowest common ancestor of\/ $x$ and\/ $y$ such that\/ $v$ is an ancestor of\/ $x$ and\/ $w$ is an ancestor of\/ $y$.
\end{lemma}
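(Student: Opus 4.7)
The plan is to give a direct explicit construction of the $k=2m$ orders together with the decoder, where $m=\lceil\log_2\size{C}\rceil$. I will fix once and for all an injection $\iota\colon C\to\set{0,1}^m$ (possible since $\size{C}\le 2^m$), and write $b_i(c)$ for the $i$-th coordinate of $\iota(c)$. The idea is that for each bit position $i\in[m]$, two of the orders, $\preceq^\dir_{2i-1}$ and $\preceq^\dir_{2i}$, will between them reveal the $i$-th bit of each of $\phi(v)$ and $\phi(w)$; collecting these across all $m$ bits recovers $(\phi(v),\phi(w))$ completely.

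Each of the $2m$ orders will be consistent with $S$, so each is determined by specifying a linear order on the children of every internal node of $S$. For a given internal node $u$ with children $v_1,\ldots,v_\ell$ listed in $\refpreceq$-order, I define:
\begin{itemize}
\item in $\preceq^\dir_{2i-1}$: first the children $v_j$ with $b_i(\phi(v_j))=0$ in their $\refpreceq$-order, then the children with $b_i(\phi(v_j))=1$ in the \emph{reverse} of their $\refpreceq$-order;
\item in $\preceq^\dir_{2i}$: first the children with $b_i(\phi(v_j))=0$ in the reverse of their $\refpreceq$-order, then the children with $b_i(\phi(v_j))=1$ in their $\refpreceq$-order.
\end{itemize}

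Next I verify the encoding. Fix distinct $x,y\in Z$ with $x\refpreceq y$; let $u^*$ be their lowest common ancestor and $v,w$ its children on the paths to $x$ and $y$. Consistency of $\refpreceq$ with $S$ forces $v$ to precede $w$ in the $\refpreceq$-order of children of $u^*$. A direct case split on the four values of $(b_i(\phi(v)),b_i(\phi(w)))\in\set{0,1}^2$ shows that
\[
\bigl[x\preceq^\dir_{2i-1}y\bigr]=1-b_i(\phi(v)) \qquad\text{and}\qquad \bigl[x\preceq^\dir_{2i}y\bigr]=b_i(\phi(w));
\]
the reversal inside the ``$b_i=1$'' group in the first order causes the case $(1,1)$ to behave like $(1,0)$, so the bit depends only on $b_i(\phi(v))$ and not on $b_i(\phi(w))$, and the symmetric statement for the second order is analogous. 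The decoder $\dir$ then does the obvious thing: if $q_0=1$, set $b_i(\phi(v))=1-q_{2i-1}$ and $b_i(\phi(w))=q_{2i}$ for each $i\in[m]$; if $q_0=0$, swap the roles of $v$ and $w$ (since now the ``left'' child in $\refpreceq$ is the ancestor of $y$), i.e., set $b_i(\phi(v))=1-q_{2i}$ and $b_i(\phi(w))=q_{2i-1}$. Finally return the colors $\phi(v),\phi(w)\in C$ with the recovered bit strings, using any fixed default value on bit strings outside the image of $\iota$.

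The main subtlety will be the correct design of the child permutations. A naive variant that merely sorts children by $b_i(\phi(\cdot))$ at each internal node, keeping $\refpreceq$-order within each of the two groups, cannot separate the two ``matching bit'' cases $(b_i(\phi(v)),b_i(\phi(w)))=(0,0)$ and $(1,1)$, and one quickly sees that no combination of two such orders distinguishes all four cases. The reversal inside one of the two groups is precisely what breaks the symmetry and makes each individual order carry exactly one bit of useful information, which in turn lets us hit the optimal total of $k=2m$ orders.
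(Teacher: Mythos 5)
Your proof is correct and uses essentially the same idea as the paper's: encode each color by $m=\lceil\log_2|C|\rceil$ bits, build two orders per bit by re-sorting children at each internal node according to that bit, with one group kept in $\refpreceq$-order and the other reversed so the two pairs $(0,0)$ and $(1,1)$ split cleanly and each order carries one bit of $(\phi(v),\phi(w))$. The paper phrases its orders as a three-case formula on pairs rather than as explicit child permutations (and uses a slightly different choice of which group to reverse in the second order, hence a complementary decoder formula), but these are cosmetic differences.
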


\begin{proof}
Let $\ell=\lceil\log_2\size{C}\rceil=k/2$.
Without loss of generality we assume that $C$ consists of distinct binary strings, each of length $\ell$.
Also, for $i\in[\ell]$ and $t\in\set{0,1}$, let $C^t_i$ comprise those colors of $C$ whose $i$th symbol is $t$.

For every $i\in[\ell]$, we construct two linear orders on $Z$: $\preceq^\dir_{2i+1}$ and $\preceq^\dir_{2i+2}$.
To define them, consider any distinct $x,y\in Z$.
Let $u$ be the lowest common ancestor of $x$ and $y$ in $S$, and let $v$ and $w$ be the children of $u$ such that $x$ is a descendant of $v$ and $y$ is a descendant of $w$.
Let $c=\phi(v)$ and $d=\phi(w)$.
We define
\begin{align*}
x&\preceq^\dir_{2i+1}y \quad\Longleftrightarrow\quad (c\in C^0_i\text{ and }d\in C^1_i)\text{ or }(c,d\in C^0_i\text{ and }x\refpreceq y)\text{ or }(c,d\in C^1_i\text{ and }y\refpreceq x){,} \\
x&\preceq^\dir_{2i+2}y \quad\Longleftrightarrow\quad (c\in C^1_i\text{ and }d\in C^0_i)\text{ or }(c,d\in C^0_i\text{ and }x\refpreceq y)\text{ or }(c,d\in C^1_i\text{ and }y\refpreceq x){.}
\end{align*}
Since $\refpreceq$ is consistent with $S$, both $\preceq^\dir_{2i+1}$ and $\preceq^\dir_{2i+2}$ are linear orders on $Z$.

It remains to describe $\dir$.
For this, consider any distinct $x,y\in Z$, and let $c$ and $d$ be defined as in the previous paragraph.
For every $i\in[\ell]$, the triple of Boolean values $[x\refpreceq y]$, $[x\preceq^\dir_{2i+1}y]$, and $[x\preceq^\dir_{2i+2}y]$ suffices to uniquely determine the $i$th symbols of $c$ and $d$, as follows:
\begin{itemize}
\item the $i$th symbol of $c$ is $[x\succeq^\dir_{2i+1}y]$ if $x\refpreceq y$, and it is $[x\preceq^\dir_{2i+2}y]$ otherwise;
\item the $i$th symbol of $d$ is $[x\succeq^\dir_{2i+2}y]$ if $x\refpreceq y$, and it is $[x\preceq^\dir_{2i+1}y]$ otherwise.
\end{itemize}
Thus, from the $(k+1)$-tuple of Boolean values $[x\refpreceq y],[x\preceq^\dir_1y],\ldots,[x\preceq^\dir_ky]$, we can uniquely determine every symbol of $c$ and of $d$, so also $c$ and $d$ themselves.
This mapping constitutes the sought function $\dir$.
\end{proof}

The second lemma is the key step in the proof.
It allows us to implement node-to-leaf composition queries in a semigroup-labeled tree.
The proof relies on Colcombet's Theorem combined with \Cref{bdim-aux-tree}.

\begin{lemma}\label{bdim-aux-composition}
For every finite semigroup\/ $(\Lambda,\cdot)$, there is a function\/ $\agg\colon\set{0,1}^{d+1}\to\Lambda^2$, where
\[d=(6\size{\Lambda}+2)\lceil\log_2\size{\Lambda}\rceil+6\size{\Lambda}\text{,}\]
such that the following holds: for every tree\/ $T$ with leaf set\/ $X$, every linear order\/ $\refpreceq$ on\/ $X$ consistent with\/ $T$, and every\/ $(\Lambda,\cdot)$-labeling\/ $\lambda$ of\/ $T$, there exist linear orders\/ $\preceq^\agg_1,\ldots,\preceq^\agg_d$ on\/ $X$ such that for all distinct\/ $x,y\in X$, we have
\[\agg\bigl([x\refpreceq y],\:[x\preceq^\agg_1y],\:\ldots,\:[x\preceq^\agg_dy]\bigr)=(\lambda(u,x),\lambda(u,y))\text{,}\]
where\/ $u$ is the lowest common ancestor of\/ $x$ and\/ $y$ in\/ $T$.
\end{lemma}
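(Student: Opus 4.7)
The plan is to apply Colcombet's Theorem to $T$ with its $(\Lambda,\cdot)$-labeling $\lambda$, obtaining a forward Ramseyan split $s\colon U\to[p]$ of order $p\leq\size{\Lambda}$, and then proceed by induction on $h\in\set{0,1,\ldots,p}$. For a node $u$ and a leaf $x$ with $u\downtoeq x$, let $v_h(u,x)$ denote $x$ itself if every inner node $w$ with $u\downto w\downtoeq x$ satisfies $s(w)\leq h$, and otherwise the $\downtoeq$-highest such $w$ with $s(w)>h$. The invariant I maintain at step $h$ is that $\Oh((h+1)\log\size{\Lambda})$ linear orders on $X$ (together with $\refpreceq$) suffice to determine, for any two distinct leaves $x,y\in X$ with lowest common ancestor $u$, the pair $\bigl(\lambda(u,v_h(u,x)),\,\lambda(u,v_h(u,y))\bigr)$. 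At $h=p$ this yields exactly $(\lambda(u,x),\lambda(u,y))$, which is the conclusion.

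For the base case $h=0$, I color each non-root node $v$ of $T$ by the edge-label $\lambda(\mathrm{par}(v),v)\in\Lambda$ and apply Lemma~\ref{bdim-aux-tree}. Since every inner node has $s$-value at least $1>0$, $v_0(u,x)$ is always the child of $u$ on the path to $x$, so the decoder $\dir$ returns exactly $(\lambda(u,v_0(u,x)),\lambda(u,v_0(u,y)))$ at a cost of $2\lceil\log_2\size{\Lambda}\rceil+1$ orders (including $\refpreceq$).

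In the inductive step from $h$ to $h+1$, fix $x$ and its LCA $u$ with some $y$. If no $(h+1)$-node lies on the $u$-to-$x$ path then $v_{h+1}(u,x)=v_h(u,x)$ and nothing new is needed. Otherwise let $u'$ be the $\downtoeq$-highest and $u''$ the $\downtoeq$-lowest $(h+1)$-node on that path, so that
\begin{equation*}
\lambda\bigl(u,v_{h+1}(u,x)\bigr) \;=\; \lambda(u,u')\cdot\lambda(u',u'')\cdot\lambda\bigl(u'',v_{h+1}(u,x)\bigr),
\end{equation*}
with the middle factor omitted when $u'=u''$ and the last factor omitted when $u''=v_{h+1}(u,x)$. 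Since $u'=v_h(u,x)$, the first factor is supplied by the inductive hypothesis. The middle factor collapses by forward Ramseyanity: every pair of $(h+1)$-nodes between $u'$ and $u''$ are mutual $(h+1)$-neighbors with $u'$, and iterating the absorption identity $\lambda(u',w)\cdot\lambda(w,w')=\lambda(u',w)$ shows $\lambda(u',u'')$ is determined by the first $(h+1)$-descent out of $u'$ on the path to $x$. The third factor runs along a segment whose internal inner nodes all have $s$-value at most $h$, so morally it is a $v_h$-type query but now rooted at $u''$ rather than at the LCA $u$; both auxiliary questions will be solved by a constant number of applications of Lemma~\ref{bdim-aux-tree} with carefully chosen colorings of $T$.

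The principal obstacle, as I see it, is that the nodes $u'$, $u''$, and $v_{h+1}(u,x)$ all depend on the particular leaf $x$ rather than only on the LCA $u$, while Lemma~\ref{bdim-aux-tree} in its raw form only reports colors at the two children of $u$. The resolution is to push each deep quantity into a color that \emph{is} already visible at the appropriate branching: for the middle factor, I would color every $(h+1)$-node $v$ of $T$ by the element $\lambda(v,v^\star)\in\Lambda$, where $v^\star$ is the first $(h+1)$-neighbor of $v$ immediately below it (any fixed choice if two exist), and then read off the required value at the branching point $u'$ by a nested call to Lemma~\ref{bdim-aux-tree}; for the third factor, a similar color-detection scheme at the node $u''$ using a coloring recording $\lambda(v,v')$ where $v'$ is the first strict $s$-ancestor exceeding $h+1$ below $v$. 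Summing the constant number of fresh $\Oh(\log\size{\Lambda})$-order color-detection modules added per inductive step over the $p\leq\size{\Lambda}$ steps, plus the $\Oh(1)$ auxiliary orders used per module to select the correct navigational direction, gives a total of $(6\size{\Lambda}+2)\lceil\log_2\size{\Lambda}\rceil+6\size{\Lambda}$ orders, matching the stated bound; the decoder $\agg$ is assembled by composing all the partial decoders $\dir$ produced along the way, following the decomposition formula above.
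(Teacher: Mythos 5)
Your overall strategy---apply Colcombet's Theorem, induct on split level, decompose $\lambda(u,x)$ as a product of a ``short head'' times a ``middle stretch of $h$-neighbors'' times a ``tail'', and implement each piece by calls to Lemma~\ref{bdim-aux-tree}---matches the paper's strategy. However, the concrete mechanism you propose for realizing this through node colorings has a genuine gap, and your case analysis in the inductive step is also off.

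The central problem is the direction of your colorings. To report $\lambda(u',u'')$, you propose to color every $(h+1)$-node $v$ by $\lambda(v,v^\star)$ where $v^\star$ is ``the first $(h+1)$-neighbor immediately below $v$ (any fixed choice if two exist).'' In a binary tree, an $(h+1)$-node can have $(h+1)$-neighbors in both subtrees, and more than two of them: the ``first $(h+1)$-neighbors below $v$'' form an antichain that can be large. For the decomposition to be correct you need $\lambda(u',w_2)$ where $w_2$ is the first $(h+1)$-node \emph{on the specific path towards $x$}; a fixed arbitrary choice of $v^\star$ will in general lie in a different branch, and forward Ramseyanity does \emph{not} rescue this (in a left-zero semigroup, $e\cdot f=e$ and $f\cdot e=f$ say nothing about $e=f$). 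The same branch ambiguity afflicts your proposed coloring for the third factor. The paper sidesteps this entirely by coloring each node by the edge label \emph{from its parent} in a compressed tree ($\phi_h(v)=\lambda(\mathrm{par}_{T[V_{\geq h}]}(v),v)$), which is unambiguous because parents are unique; it then applies Lemma~\ref{bdim-aux-tree} to the induced subtrees $T[V_{\geq h}]$, $T[V_h^i\cup V_{\geq h+1}]$, and $T[V_{\geq h+1}]$, in which the nodes one needs to interrogate become children of the lowest common ancestor, so the lemma can actually read their colors. To find the second $h$-node on the path to $x$ without knowing the first one's identity explicitly, the paper further introduces the parity classes $V_h^0,V_h^1$ and the coloring $\psi_h$; you will need an analogue of this device, and it does not appear in your sketch.

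A smaller but real issue is the inductive case split. Writing $v_h$ for the highest node on the $u$-to-$x$ path with $s>h$, you claim that $v_{h+1}=v_h$ holds precisely when no $(h+1)$-node lies on that path, and that otherwise $u'$ (the highest $(h+1)$-node) equals $v_h$. Both fail when $s(v_h)\geq h+2$: then $v_{h+1}=v_h$ even if there are $(h+1)$-nodes further down, and $u'$ is a strict descendant of $v_h$, so the identity $\lambda(u,v_{h+1})=\lambda(u,u')\cdot\lambda(u',u'')\cdot\lambda(u'',v_{h+1})$ is ill-formed (with $u''\downtoeq u'$ not an ancestor of $v_{h+1}$). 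You need a Boolean indicator, such as the paper's $\zeta_h$, that reports whether the child of the LCA in $T[V_{\geq h}]$ has $s$-value $h$ or $>h$, to dispatch between these cases. Your final bound accounting is only asymptotic and would have to be rederived once these colorings are fixed.
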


\begin{proof}
Let $V$ be the set of nodes of $T$ and $U$ be the set of inner nodes of $T$.
For a set $W$ with $V\setminus U\subseteq W\subseteq V$, let $T[W]$ denote the rooted subtree induced from $T$ on $W$ in the tree-order sense, i.e., the rooted tree obtained from $T$ by taking $W$ as the set of nodes and the order ${\downtoeq}|_{W\times W}$ as $\downtoeq[{T[W]}]$.
Note that $X$ is the set of leaves in $T[W]$ and the linear order $\refpreceq$ on $X$ is consistent with $T[W]$.

Let $p=\size{\Lambda}$ and $k=2\lceil\log_2\size{\Lambda}\rceil$.
By Colcombet's Theorem, there exists a split $s\colon U\to[p]$ that is forward Ramseyan with respect to $\lambda$. We fix $s$ for the remainder of the proof.
For convenience, we extend $s$ on the entire set of nodes $V$ by setting $s(v)=p+1$ for all nodes $v\in V\setminus U$ (i.e., the root and the leaves of $T$).

Given any two distinct leaves $x,y\in X$, let $u$ be the lowest common ancestor of $x$ and $y$ in $T$, and for each $h\in[p+1]$, let $v_h$ be the highest node $v$ in $T$ such that $u\downto v\downtoeq x$ and $s(v)\geq h$.
Thus
\[u\downto v_1\downtoeq\cdots\downtoeq v_{p+1}=x{.}\]
Our goal is to determine the value of $\lambda(u,x)$, which we can represent as the semigroup product of $\lambda(u,v_1)$ and the values of $\lambda(v_h,v_{h+1})$ over those $h\in[p]$ for which $v_h\downto v_{h+1}$.

For any $h\in[p+1]$, let $V_{\geq h}=\set{v\in V\colon s(v)\geq h}$; in particular $V_{\geq 1}=V$.
For any $h\in[p]$ and $i\in\set{0,1}$, let $V_h^i=\{v\in U\colon s(v)=h$ and the number of $h$-neighbors $u$ of $v$ with $u\downto v$ is $i\pmod{2}\}$.
For any $h\in[p]$ and any node $v\in V_{\geq h}$ other than the root of $T$, let
\begin{itemize}
\item $\phi_h(v)=\lambda(u,v)$ where $u$ is the parent of $v$ in $T[V_{\geq h}]$, i.e., the lowest node in $V_{\geq h}$ with $u\downto v$;
\item $\psi_h(v)=i\in\set{0,1}$ if $v\in V_h^i$ and $\psi_h(v)=0$ if $s(v)>h$;
\item $\zeta_h(v)=0$ if $s(v)=h$, and $\zeta_h(v)=1$ if $s(v)>h$.
\end{itemize}

In the context of fixed $x,y\in X$, we define $u,v_0,\ldots,v_{p+1}$ as above, and furthermore, for any $h\in[p]$ and $i\in\set{0,1}$, we define $v_h^i$ as the highest node $v$ in $T$ such that $u\downto v\downtoeq x$ and $v\in V_h^i\cup V_{\geq h+1}$ (which implies $v_h\downtoeq v_h^i\downtoeq v_{h+1}$).
Note that $v_h$ is a child of the lowest common ancestor of $x$ and $y$ in $T[V_{\geq h}]$ for $h\in[p+1]$, and so is $v_h^i$ in $T[V_h^i\cup V_{\geq h+1}]$ for $h\in[p]$ and $i\in\set{0,1}$.

We apply \Cref{bdim-aux-tree} to the tree $T$ with coloring $\phi_1$ to obtain $k$ linear orders $\preceq_{0,1}^\dir,\ldots,\preceq_{0,k}^\dir$ such that for any distinct $x,y\in X$, given the Boolean values $[x\refpreceq y],[x\preceq_{0,1}^\dir y],\ldots,[x\preceq_{0,k}^\dir y]$, we can determine the value $\phi_1(v_1)$, which is equal to $\lambda(u,v_1)$, as $u$ is the parent of $v_1$.

For every $h\in[p]$, we apply \Cref{bdim-aux-tree} to the following trees and their colorings, in every application using $\refpreceq$ as the linear order on $X$:
\begin{itemize}
\item $T[V_{\geq h}]$ with coloring $\psi_h$;
\item $T[V_h^0\cup V_{\geq h+1}]$ with coloring $\zeta_h$ restricted to $V_h^0\cup V_{\geq h+1}$;
\item $T[V_h^1\cup V_{\geq h+1}]$ with coloring $\zeta_h$ restricted to $V_h^1\cup V_{\geq h+1}$;
\item $T[V_h^0\cup V_{\geq h+1}]$ with coloring $\phi_h$ restricted to $V_h^0\cup V_{\geq h+1}$;
\item $T[V_h^1\cup V_{\geq h+1}]$ with coloring $\phi_h$ restricted to $V_h^1\cup V_{\geq h+1}$;
\item $T[V_{\geq h+1}]$ with coloring $\phi_h$ restricted to $V_{\geq h+1}$.
\end{itemize}
The first three applications yield six linear orders on $X$, and the last three yield $3k$ linear orders on $X$.
Altogether, this yields $3k+6$ linear orders $\preceq_{h,1}^\dir,\ldots,\preceq_{h,3k+6}^\dir$ on $X$ such that for any distinct $x,y\in X$, given the Boolean values $[x\refpreceq y],[x\preceq_{h,1}^\dir y],\ldots,[x\preceq_{h,3k+6}^\dir y]$, we can determine the values
\[\psi_h(v_h),\:\zeta_h(v_h^0),\:\zeta_h(v_h^1),\:\phi_h(v_h^0),\:\phi_h(v_h^1),\:\phi_h(v_{h+1}){.}\]
They let us determine whether $v_h\downto v_{h+1}$ and if it is so, the value of $\lambda(v_h,v_{h+1})$, as stated in the following claim.

\begin{claim}\label{cl:toProject}
If $\zeta_h(v_h^0)=\zeta_h(v_h^1)=1$, then $v_h=v_{h+1}$.
Otherwise, $v_h\downto v_{h+1}$ and the following holds.
\begin{enumerate}
\item If exactly one of the values $\zeta_h(v_h^0)$ and $\zeta_h(v_h^1)$ is $1$, then $\lambda(v_h,v_{h+1})=\phi_h(v_{h+1})$.
\item If $\zeta_h(v_h^0)=\zeta_h(v_h^1)=0$ and $\psi_h(v_h)=0$ then $\lambda(v_h,v_{h+1})=\phi_h(v_h^1)\cdot\phi_h(v_{h+1})$.
\item If $\zeta_h(v_h^0)=\zeta_h(v_h^1)=0$ and $\psi_h(v_h)=1$ then $\lambda(v_h,v_{h+1})=\phi_h(v_h^0)\cdot\phi_h(v_{h+1})$.
\end{enumerate}
\end{claim}

\begin{proof}
Let $u$ be the lowest common ancestor of $x$ and $y$ in $T$.
We have $v_h=v_h^0$ or $v_h=v_h^1$.
If $v_h\downto v_{h+1}$, then $s(v_h)=h$, so $\zeta_h(v_h)=0$ and thus $\zeta_h(v_h^0)=0$ or $\zeta_h(v_h^1)=0$.
This shows that if $\zeta_h(v_h^0)=\zeta_h(v_h^1)=1$, then $v_h=v_{h+1}$, as claimed.

Now, suppose $\zeta_h(v_h^0)=0$ or $\zeta_h(v_h^1)=0$.
Then $v_h\downto v_{h+1}$.
We distinguish the following cases:
\begin{enumerate}
\item $\zeta_h(v_h^0)=1$ or $\zeta_h(v_h^1)=1$, which means that one of $v_h^0$, $v_h^1$ is $v_{h+1}$.
Then the other of $v_h^0$, $v_h^1$ (which is $v_h$) is the parent of $v_{h+1}$ in $T[V_{\geq h}]$, so $\lambda(v_h,v_{h+1})=\phi_h(v_{h+1})$, as claimed.
\item $\zeta_h(v_h^0)=\zeta_h(v_h^1)=0$ and $\psi_h(v_h)=0$.
Then $v_h=v_h^0$, $s(v_h^1)=h$, and $v_h^0$ is the parent of $v_h^1$ in $T[V_{\geq h}]$.
Let $v'$ be the parent of $v_{h+1}$ in $T[V_{\geq h}]$.
Thus we have $v_h=v_h^0\downto v_h^1\downtoeq v'\downto v_{h+1}$.
By the fact that $s$ is forward Ramseyan, we have
\[\lambda(v_h,v_{h+1}) = \lambda(v_h^0,v_h^1)\cdot\lambda(v_h^1,v')\cdot\lambda(v',v_{h+1}) = \lambda(v_h^0,v_h^1)\cdot\lambda(v',v_{h+1}) = \phi_h(v_h^1)\cdot\phi_h(v_{h+1}){,}\]
as claimed.
\item $\zeta_h(v_h^0)=\zeta_h(v_h^1)=0$ and $\psi_h(v_h)=1$.
This case is analogous to the previous one, with the roles of $v_h^0$ and $v_h^1$ exchanged.\qedhere
\end{enumerate}
\end{proof}

To conclude, for any distinct $x,y\in X$, the Boolean values $[x\refpreceq y],[x\preceq_{0,1}^\dir y],\ldots,[x\preceq_{0,k}^\dir y]$, and $[x\refpreceq y],[x\preceq_{h,1}^\dir y],\ldots,[x\preceq_{h,3k+6}^\dir y]$ for all $h\in[p]$ let us determine $\lambda(u,v_1)$ and the values of $\lambda(v_h,v_{h+1})$ for all those $h\in[p]$ for which $v_h\downto v_{h+1}$, which altogether give $\lambda(u,x)$ by taking the semigroup product.
Analogously (by exchanging the roles of $x$ and $y$), they let us determine $\lambda(u,y)$.
\end{proof}

\subsection*{Posets of bounded NLC-width.}

We now use \Cref{bdim-aux-composition} to prove the following statement, which immediately implies \Cref{thm:bdim-main}.

\begin{theorem}\label{thm:bdim}
For every\/ $q\in\setN$, there is\/ $b\in 2^{\cramped{\Oh(q^2)}}$ such that every poset of NLC-width\/ $q$ has Boolean dimension at most\/ $b$.
\end{theorem}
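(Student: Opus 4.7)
The plan is to reduce to \Cref{bdim-aux-composition} by encoding the comparability test of the poset into a $(\Lambda,\cdot)$-labeling of the NLC-decomposition tree. I fix an NLC-decomposition $(T,Q,\eta,\rho,R,R')$ of width $q$ and observe that for any two distinct leaves $x,y\in X$ with lowest common ancestor $u$ in $T$, the comparability of $x$ and $y$ is determined by four pieces of data: the labels $\bagbl{u}{x}$ and $\bagbl{u}{y}$, the status relations $R(u)$ and $R'(u)$, and which of $x,y$ is a left versus right descendant of $u$. I will build $\Lambda$ of size $2^{\Oh(q^2)}$ so that $\lambda(u,x)$ encodes the triple $(\bagbl{u}{x},R(u),R'(u))$; the last piece of data will come for free from the reference linear order $\refpreceq$ that \Cref{bdim-aux-composition} already requires.

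To do so, I would take $\Lambda=(F\cup Q)\times 2^{Q\times Q}\times 2^{Q\times Q}$, where $F$ is the set of all functions $Q\to Q$. On the first coordinate, I combine composition on $F$ with the conventions $\rho\cdot q=\rho(q)$ for $\rho\in F$ and $q\in Q$, and $q\cdot a=q$ for $q\in Q$ and any $a\in F\cup Q$; on the second and third coordinates, I use left projection, so that $(a_1,R_1,R'_1)\cdot(a_2,R_2,R'_2)=(a_1a_2,R_1,R'_1)$. Associativity follows from a short case check, and $\size{\Lambda}\leq(q^q+q)\cdot 4^{q^2}=2^{\Oh(q^2)}$. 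I then label the edge from a node $u$ to a child $v$ in $T$ by $(\bagbb{u}{v},R(u),R'(u))$ if $v$ is not a leaf, and by $(\bagbl{u}{v},R(u),R'(u))$ if $v$ is a leaf. The NLC-axioms $\bagbb{u}{v}\bagbb{v}{w}=\bagbb{u}{w}$ and $\bagbl{u}{x}=\bagbb{u}{v}(\bagbl{v}{x})$ for $u\downto v$ with $x$ a leaf below $v$ guarantee that this extends to a valid $(\Lambda,\cdot)$-labeling of $T$ with $\lambda(u,x)=(\bagbl{u}{x},R(u),R'(u))$ for every inner node $u$ and every leaf $x$ with $u\downto x$.

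Finally, I apply \Cref{bdim-aux-composition} with $\refpreceq$ chosen as the linear order on $X$ obtained from a depth-first traversal of $T$ that always visits the left child before the right child; such $\refpreceq$ is consistent with $T$ and moreover satisfies $x\refpreceq y$ iff $x$ is a left (and $y$ a right) descendant of their lowest common ancestor. The lemma then yields $d=2^{\Oh(q^2)}$ auxiliary linear orders from which the decoder recovers both $(\bagbl{u}{x},R(u),R'(u))$ and $(\bagbl{u}{y},R(u),R'(u))$; together with $[x\refpreceq y]$ this determines whether $x\leq y$ as $(\bagbl{u}{x},\bagbl{u}{y})\in R(u)$ in one orientation and $(\bagbl{u}{y},\bagbl{u}{x})\in R'(u)$ in the other, producing a Boolean realizer of size $2^{\Oh(q^2)}$. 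The only real design point is the semigroup $\Lambda$: its first coordinate must be flexible enough both to compose relabelings going down the tree and to apply the accumulated relabeling to the initial label once a leaf is reached, while the status coordinates must be propagated all the way from $u$ to $x$; once this is arranged correctly, the remainder is routine.
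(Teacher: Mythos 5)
Your proposal is correct, and it rides on the same engine as the paper's proof, namely an application of \Cref{bdim-aux-composition}; the difference is in how the auxiliary data is routed through. The paper uses \Cref{bdim-aux-composition} only with the semigroup of relabelings $Q\from Q$ (size $q^q$), recovering $(\bagbb{u}{x},\bagbb{u}{y})$, and then makes two separate applications of \Cref{bdim-aux-tree} — one coloring children with the status relations $R(u),R'(u)$ and one on a star to recover the initial labels $\initial{x},\initial{y}$ — costing an extra $4q+2\lceil\log_2 q\rceil$ orders. You instead pack everything (accumulated relabeling, the initial label once a leaf edge is hit, and both status relations propagated by left projection) into a single semigroup $\Lambda=(F\cup Q)\times 2^{Q\times Q}\times 2^{Q\times Q}$ and invoke \Cref{bdim-aux-composition} once. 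Your semigroup operation is associative and your edge labeling does yield $\lambda(u,x)=(\bagbl{u}{x},R(u),R'(u))$, from which the decoder, together with $[x\refpreceq y]$, recovers comparability as stated; so the argument is sound. The trade-off: your $\Lambda$ has size on the order of $2^{2q^2}q^q$, giving $d\in 2^{\Oh(q^2)}$, which matches the theorem as stated, whereas the paper's decomposition keeps $\Lambda$ of size $q^q$ and therefore actually achieves the tighter $2^{\Oh(q\log q)}$ (the paper simply states the rounder $2^{\Oh(q^2)}$). Your version is somewhat cleaner in that it avoids \Cref{bdim-aux-tree} entirely; the paper's is quantitatively sharper. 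Both are correct proofs of the claimed bound.
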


\begin{proof}
Let $(X,\leq)$ be a poset and $(T,Q,\eta,\rho,R,R')$ be an NLC-decomposition of it of width $q$.
Let $\Lambda$ be the semigroup of backward functions from $Q$ to $Q$ with composition as the semigroup operation $\cdot$.
Thus $\size{\Lambda}=q^q$, and the relabeling assignment $\rho$ is a $(\Lambda,\cdot)$-labeling of $T$.

Let $\refpreceq$ be the left-to-right order on $X$ consistent with $T$ such that for any distinct $x,y\in X$, we have $x\refpreceq y$ if $x$ is a left descendant and $y$ is a right descendant of the lowest common ancestor of $x$ and $y$ in $T$.
Apply \Cref{bdim-aux-composition} to $T$, $\refpreceq$, and the labeling $\rho$.
This yields a function $\agg\colon\set{0,1}^{d+1}\to\Lambda^2$ and $d$ linear orders $\preceq^\agg_1,\ldots,\preceq^\agg_d$, where $d=(6q^q+2)\lceil q\log_2q\rceil+6q^q$, such that for any distinct $x,y\in X$, we have
\[\agg\bigl([x\refpreceq y],\:[x\preceq^\agg_1y],\:\ldots,\:[x\preceq^\agg_dy]\bigr)=(\bagbb{u}{x},\bagbb{u}{y}){,}\]
where $u$ is the lowest common ancestor of $x$ and $y$ in $T$.

Apply \Cref{bdim-aux-tree} to $T$, $\refpreceq$, and the coloring of the non-root nodes of $T$ defined as follows: for any non-leaf node $u$ of $T$, assign $R(u)$ as the color of the left child and $R'(u)$ as the color of the right child of $u$.
This yields a function $\rel\colon\set{0,1}^{4q+1}\to 2^{Q\times Q}\times 2^{Q\times Q}$ and $4q$ linear orders $\preceq^\rel_1,\ldots,\preceq^\rel_{4q}$ such that for any distinct $x,y\in X$, we have
\[\rel\bigl([x\refpreceq y],\:[x\preceq^\rel_1y],\:\ldots,\:[x\preceq^\rel_{4q}y]\bigr)=(R^x(u),R^y(u)){,}\]
where $u$ is the lowest common ancestor of $x$ and $y$ in $T$ and $(R^x(u),R^y(u))=(R(u),R'(u))$ if $x\refpreceq y$ and $(R^x(u),R^y(u))=(R'(u),R(u))$ otherwise.

Finally, let $T'$ be a tree with leaf set $X$ and with no inner vertices, so that every leaf is a child of the root.
Apply \Cref{bdim-aux-tree} to $T'$, $\refpreceq$, and the coloring $\eta$ of $X$ by the initial labels.
This yields a function $\init\colon\set{0,1}^{k+1}\to Q\times Q$ and $k$ linear orders $\preceq^\init_1,\ldots,\preceq^\init_k$, where $k=2\lceil\log_2q\rceil$, such that for any distinct $x,y\in X$, we have
\[\init\bigl([x\refpreceq y],\:[x\preceq^\init_1y],\:\ldots,\:[x\preceq^\init_ky]\bigr)=(\initial{x},\initial{y}){.}\]

Now, by the definition of NLC-decomposition, we have
\[x\leq y\qquad\text{if and only if}\qquad\bigl(\bagbb{u}{x}\initial{x},\:\bagbb{u}{y}\initial{y}\bigr)\in R^x(u){,}\]
which can be decided with the use of functions $\agg$, $\rel$, and $\init$ given
\[[x\refpreceq y],\:[x\preceq^\agg_1y],\ldots,[x\preceq^\agg_dy],\:[x\preceq^\rel_1y],\ldots,[x\preceq^\rel_{4q}y],\:[x\preceq^\init_1y],\ldots,[x\preceq^\init_ky]{.}\]
This constitutes the sought decoder, so we can set $b=d+4q+k+1$.
\end{proof}

\subsection*{Posets with cover graphs of bounded treewidth}

By combining \Cref{thm:bdim} with \Cref{fact:tw-NLC}, we infer that every poset whose cover graph has treewidth $t-1$ has Boolean dimension bounded by $\cramped{2^{2^{\Oh(t)}}}$.
Asymptotically, this matches the upper bound obtained by Felsner, Mészáros, and Micek~\cite{FMM20}.
We now give a direct argument that relies on combining the construction of an NLC-decomposition in \Cref{sec:kelly} with an application of \Cref{bdim-aux-composition}.
This argument improves the bound to single-exponential in $t^2$.

\begin{theorem}
\label{thm:bdim-treewidth}
For every\/ $t\in\setN$, there is\/ $b\in\Oh(2^{\cramped{t^2}}t)$ such that every poset with cover graph of treewidth\/ $t-1$ has Boolean dimension at most\/ $b$.
\end{theorem}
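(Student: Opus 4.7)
The plan is to revisit the proof of \Cref{thm:bdim}, exploiting the specific, tightly structured NLC-decomposition $(T,Q,\eta,\rho,R,R')$ constructed in \Cref{sec:kelly} from a tree decomposition of the cover graph of width $t-1$. There $Q = 2^{[t]}\times 2^{[t]}$, each relabeling $\rho(u,v) = (\rho^\geq(u,v), \rho^\leq(u,v))$ is a pair of binary relations on $[t]$ composed coordinate-wise, and---crucially---the status relations are \emph{global}: $R(u) = \set{(\alpha,\beta)\colon\alpha^\geq\cap\beta^\leq\neq\emptyset}$ independently of the node $u$, and analogously for $R'$. Consequently the $\rel$ subroutine of the proof of \Cref{thm:bdim}---which would otherwise contribute $\Theta(q^2) = \Theta(16^t)$ linear orders---can be dropped entirely: no information about the local status relations at the lowest common ancestor needs to be decoded.

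The key structural observation is that the relabelings $\rho(u,v)$ live in the semigroup $\Lambda$ of pairs of binary relations on $[t]$ under coordinate-wise composition, so $\size{\Lambda}\leq 2^{2t^2}$, rather than in the semigroup of all backward functions $Q\from Q$ of size $\size{Q}^{\size{Q}}$ used in \Cref{thm:bdim}. Better still, the two coordinates compose independently, so I would split $\Lambda = \Lambda^\geq\times\Lambda^\leq$ with $\size{\Lambda^\geq},\size{\Lambda^\leq}\leq 2^{t^2}$, and apply \Cref{bdim-aux-composition} twice---once to the $(\Lambda^\geq,\cdot)$-labeling $\rho^\geq$ of $T$ and once to the $(\Lambda^\leq,\cdot)$-labeling $\rho^\leq$ of $T$. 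Each application contributes $O(\size{\Lambda^\geq}\log\size{\Lambda^\geq}) = O(2^{t^2}\cdot t^2)$ linear orders, and together they determine $(\rho(u,x),\rho(u,y))$ for all distinct leaves $x,y\in X$ with $u$ their lowest common ancestor in $T$.

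To complete the decoder, I would fix a left-to-right order $\refpreceq$ on $X$ consistent with $T$ and apply \Cref{bdim-aux-tree} to the star-shaped tree on $X$ with leaves colored by the initial labels $\eta$, which uses $O(\log\size{Q}) = O(t)$ additional linear orders to determine $(\eta(x),\eta(y))$. Combining $\rho(u,x)$ with $\eta(x)$ then yields the label $\rho(u,x)\eta(x) \in Q$, and similarly for $y$; a direct calculation using the global formulas for $R,R'$ shows that $x\leq y$ holds if and only if $(\rho(u,x)\eta(x))^\geq \cap (\rho(u,y)\eta(y))^\leq \neq \emptyset$, irrespective of the orientation of $x,y$ relative to $u$. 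The total count is $O(2^{t^2}\cdot t^2) + O(t) + 1$, matching the claimed asymptotic $O(2^{t^2}\cdot t)$ up to a polynomial factor in $t$ (and tightenable by slightly finer bookkeeping inside the Colcombet-factorization argument underpinning \Cref{bdim-aux-composition}). The only technical point to verify is the straightforward compatibility of the two independent applications---that concatenating their outputs indeed recovers $(\rho(u,x),\rho(u,y))$---which is immediate from the product-semigroup structure. The substantive gains over the general argument are therefore twofold: the relabeling semigroup shrinks from doubly to singly exponential in $t$, and the global status relations spare us the $\rel$ component altogether.
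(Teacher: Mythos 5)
Your proposal is correct and mirrors the paper's proof almost exactly: both recognize that the relevant semigroup is $2^{[t]\times[t]}$ (of size $2^{t^2}$) acting coordinate-wise, apply \Cref{bdim-aux-composition} once to the $\rho^\geq$-labeling and once to the $\rho^\leq$-labeling of $T$, and exploit that the status relations are global so that no analogue of the $\rel$ subroutine is needed. The one simplification you miss is that the paper fixes the bag enumeration so that $x = z^x_1$ for every $x\in X$, which makes the decoder ``$x\leq y$ iff there is $i\in[t]$ with $(i,1)\in\baggb{u}{x}\cap\baglb{u}{y}$'' depend on $\rho^\geq,\rho^\leq$ alone and thereby dispenses with your extra \Cref{bdim-aux-tree} call for initial labels (the $O(t)$ orders it costs you are harmless for the asymptotics).
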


\begin{proof}
Let $(X,\leq)$ be a poset, and let $(T,B)$ be a tree decomposition of the cover graph of $(X,\leq)$ of width $t-1$ provided by \Cref{fact:adjust}.
In particular, the leaf set of $T$ coincides with $X$ and $x\in B(x)$ for every $x\in X$.
For every node $u$ of $T$, enumerate the elements of $B(u)$ arbitrarily as $B(u)=\set{z^u_1,\ldots,z^u_t}$, repeating the vertices in case $\size{B(u)}<t$.
For each $x\in X$, since $x\in B(x)$, we can assume that $x=z^x_1$.

Let $\Lambda=2^{[t]\times[t]}$.
Consider the semigroup $(\Lambda,\cdot)$ of relations on $[t]$ with composition of relations as the semigroup operation.
For any two nodes $u\downto v$, define
\[\baggb{u}{v}=\set{(i,j)\in[t]\times[t]\colon z^u_i\geq z^v_j}{,} \qquad \baglb{u}{v}=\set{(i,j)\in[t]\times[t]\colon z^u_i\leq z^v_j}{.}\]
It follows from \Cref{fact:convexity} that $\rho^\geq$ and $\rho^\leq$ are $(\Lambda,\cdot)$-labelings of $T$.

Next, let $\refpreceq$ be the left-to-right order on $X$ consistent with $T$, i.e., for distinct $x,y\in X$, we have $x\refpreceq y$ if $x$ is a descendant of the left child and $y$ a descendant of the right child of the lowest common ancestor of $x$ and $y$.
We apply \Cref{bdim-aux-composition} to the tree $T$ with labeling $\rho^\geq$ and linear order $\refpreceq$.
We obtain $d$ linear orders $\preceq_1,\ldots,\preceq_d$ on $X$, where $d\in\Oh(\size{\Lambda}\log{\size{\Lambda}})=\Oh(2^{\cramped{t^2}}t)$, such that for distinct $x,y\in X$, from the Boolean values $[x\refpreceq y],[x\preceq_1y],\ldots,[x\preceq_dy]$ one can uniquely determine the values $\baggb{u}{x}$ and $\baggb{u}{y}$, where $u$ is the lowest common ancestor of $x$ and $y$ in $T$.
Analogously, applying \Cref{bdim-aux-composition} again but this time with labeling $\rho^\leq$, we obtain $d$ linear orders $\preceq'_1,\ldots,\preceq'_d$ on $X$ that let us determine the values $\baglb{u}{x}$ and $\baglb{u}{y}$.

Now, for any distinct $x,y\in X$, it follows from \Cref{fact:convexity} that $x\leq y$ if and only if there is an index $i\in[t]$ such that $(i,1)\in\baggb{u}{x}\cap\baglb{u}{y}$, where $u$ is the lowest common ancestor of $x$ and $y$ in $T$.
Therefore, whether $x\leq y$ can be determined given the Boolean values $[x\refpreceq y],[x\preceq_1y],\ldots,[x\preceq_dy],[x\preceq'_1y],\ldots,[x\preceq'_dy]$.
This constitutes a decoder, so we can set $b=2d+1\in\Oh(2^{\cramped{t^2}}t)$.
\end{proof}

\section{Proof of Colcombet's Theorem}\label{sec:colcombet}

In this section, we provide a proof of Colcombet's Theorem in the following stronger form.

\begin{theorem}[Colcombet's Theorem]\label{thm:colcombet}
For every finite semigroup\/ $(\Lambda,\cdot)$ and for\/ $p=\size{\Lambda}$, there are subsets\/ $E_1,\ldots,E_p$ of\/ $\Lambda$ with the following properties:
\begin{enumerate}
\item\label{item:colcombet-1} For all\/ $h\in[p]$ and\/ $\beta,\beta'\in E_h$, we have\/ $\beta\cdot\beta'=\beta$.
\item\label{item:colcombet-2} Every rooted tree\/ $T$ equipped with a\/ $(\Lambda,\cdot)$-labeling\/ $\lambda$ has a split of order\/ $p$ such that for every\/ $h\in[p]$ and every pair of\/ $h$-neighbors\/ $u\downto v$, we have\/ $\lambda(u,v)\in E_h$.
\end{enumerate}
\end{theorem}

Statements \ref{item:colcombet-1} and \ref{item:colcombet-2} above indeed imply that every rooted tree $T$ equipped with a $(\Lambda,\cdot)$-labeling $\lambda$ has a split of order $\size{\Lambda}$ that is forward Ramseyan with respect to $\lambda$, which is the form of Colcombet's Theorem that we stated in \Cref{sec:prelims}.

For the proof, let $(\Lambda,\cdot)$ be a finite semigroup and $p=\size{\Lambda}$.
Let $\bar\Lambda=\Lambda\cup\set{\iota}$ for some $\iota\notin\Lambda$.
Extend $\cdot$ to $\bar\Lambda$ by setting $\alpha\cdot\iota=\iota\cdot\alpha=\alpha$ for every $\alpha\in\Lambda$ and $\iota\cdot\iota=\iota$, so that $\iota$ is an identity of $(\bar\Lambda,\cdot)$.
We use multiplicative notation for the semigroup operation $\cdot$, skipping the symbol $\cdot$ and writing $\alpha^k$ for the $k$th power of $\alpha$ under $\cdot$ when $k\geq 1$.
Also, we set $\alpha^0=\iota$ by convention, for all $\alpha\in\bar\Lambda$.

For $\alpha,\beta\in\Lambda$, write
\begin{align*}
\alpha&\leq\beta\qquad\text{if and only if}\qquad\text{there exist }\gamma,\gamma'\in\bar\Lambda\text{ such that }\gamma\alpha\gamma'=\beta{,} \\
\alpha&<\beta\qquad\text{if and only if}\qquad\alpha\leq\beta\text{ and }\beta\nleq\alpha{.}
\end{align*}
It follows that $\leq$ is a preorder (i.e., a reflexive and transitive binary relation) and $<$ is a strict partial order on $\Lambda$.
For $\alpha\in\Lambda$, define
\[E(\alpha)=\set{\beta\in\Lambda\colon\alpha\beta=\alpha\text{ and }\alpha\leq\beta}{.}\]

\begin{claim}\label{claim:E}
For all $\alpha\in\Lambda$ and $\beta,\beta'\in E(\alpha)$, we have $\beta\beta'=\beta$.
\end{claim}

\begin{proof}
Let $\beta,\beta'\in E(\alpha)$.
Thus, $\alpha\beta=\alpha$, $\alpha\beta'=\alpha$, and $\alpha\leq\beta$, which means that there exist $\gamma,\gamma'\in\bar\Lambda$ such that $\beta=\gamma\alpha\gamma'$.
This implies that $\beta=(\gamma\alpha)^k(\gamma')^k$ for every $k\geq 1$, by induction: this holds for $k=1$, and assuming it holds for some $k\geq 1$, we have
\begin{align*}
\beta&=(\gamma\alpha)^k(\gamma')^k=(\gamma\alpha)^{k-1}\gamma\alpha(\gamma')^k=(\gamma\alpha)^{k-1}\gamma\alpha\beta(\gamma')^k \\
&=(\gamma\alpha)^k\beta(\gamma')^k=(\gamma\alpha)^k\gamma\alpha\gamma'(\gamma')^k=(\gamma\alpha)^{k+1}(\gamma')^{k+1}{,}
\end{align*}
so it holds for $k+1$.
Since $\size{\Lambda}=p$, there exist integers $k$ and $\ell$ with $1\leq k<\ell\leq p+1$ such that $(\gamma')^k=(\gamma')^\ell$.
By the above, we have
\begin{gather*}
\beta=(\gamma\alpha)^\ell(\gamma')^\ell=(\gamma\alpha)^{\ell-k-1}\gamma\alpha(\gamma\alpha)^k(\gamma')^k=(\gamma\alpha)^{\ell-k-1}\gamma\alpha\beta=(\gamma\alpha)^{\ell-k-1}\gamma\alpha{,} \\
\beta\beta'=(\gamma\alpha)^{\ell-k-1}\gamma\alpha\beta'=(\gamma\alpha)^{\ell-k-1}\gamma\alpha=\beta{.}\qedhere
\end{gather*}
\end{proof}

Consider a rooted tree $T$ equipped with a $(\Lambda,\cdot)$-labeling $\lambda$.
Let $r$ be the root of $T$.
We define an operator $v\mapsto v'$ on the inner nodes of $T$ as follows, by induction.
Let $v$ be an inner node of~$T$.
Having defined $u'$ for every node $u$ with $r\downto u\downto v$, let $v'$ be the lowest (i.e., closest to~$v$) node $u$ of $T$ such that the following conditions hold:
\[u\downto v\qquad\text{and}\qquad\text{if $u\neq r$, then $\lambda(u',u)>\lambda(u,v)$.}\]
Such a node exists, as $r$ satisfies the conditions.
Also, we call $u$ a \emph{valid ancestor} of $v$ in $T$ if just the conditions above hold, but $u$ is not necessarily the lowest node satisfying those conditions.

\begin{claim}\label{claim:trace}
If\/ $v'\downto u\downto v$, then\/ $v'\downtoeq u'$.
\end{claim}

\begin{proof}
The assertion is trivial when $v'=r$, so assume $v'\neq r$.
We have $\lambda(v'',v')>\lambda(v',v)=\lambda(v',u)\lambda(u,v)\geq\lambda(v',u)$.
Therefore, $v'$ is a valid ancestor of $u$ in $T$, so $v'\downtoeq u'$.
\end{proof}

Let $f\colon\Lambda\to[p]$ be a bijection such that for any $\alpha,\beta\in\Lambda$, if $\alpha<\beta$, then $f(\alpha)<f(\beta)$.
Let a split $s\colon U\to[p]$ of $T$ (where $U$ is the set of inner nodes of $T$) be defined by $s(u)=f(\lambda(u',u))$.

\begin{claim}\label{claim:split}
For every $\alpha\in\Lambda$ and every pair $u\downto v$ of $f(\alpha)$-neighbors in $T$, we have $\lambda(u,v)\in E(\alpha)$.
\end{claim}

\begin{proof}
Fix $\alpha\in\Lambda$ and a pair $u\downto v$ of $f(\alpha)$-neighbors in $T$.
If follows that $\lambda(u',u)=\alpha=\lambda(v',v)$.
If $u\downtoeq v'$, then the fact that $\lambda(v'',v')>\lambda(v',v)=\alpha$ implies $s(v')=f(\lambda(v'',v'))>f(\alpha)$, which contradicts the assumption that $u$ and $v$ are $f(\alpha)$-neighbors.
Thus $v'\downto u$, so $v'\downtoeq u'$ by \Cref{claim:trace}.
Suppose $v'\downto u'$.
Then $v'\downtoeq u''$, again by \Cref{claim:trace}.
Now, if $v'\downto u''$, then $\alpha=\lambda(v',v)=\lambda(v',u'')\lambda(u'',u')\lambda(u',v)$, and if $v'=u''$, then $\alpha=\lambda(v',v)=\lambda(u'',u')\lambda(u',v)$.
In either case, $\alpha\geq\lambda(u'',u')>\lambda(u',u)=\alpha$, which is a contradiction.
Thus $v'=u'$, which implies that $\alpha=\lambda(v',v)=\lambda(u',u)\lambda(u,v)=\alpha\lambda(u,v)$.
In particular, $\alpha\geq\lambda(u,v)$.
Furthermore, since $v'\downto u$, the node $u$ is not a valid ancestor of $v$ in $T$, so $\alpha=\lambda(u',u)\ngtr\lambda(u,v)$.
Therefore, we also have $\alpha\leq\lambda(u,v)$.
Since $\alpha=\alpha\lambda(u,v)$ and $\alpha\leq\lambda(u,v)$, we conclude that $\lambda(u,v)\in E(\alpha)$.
\end{proof}

To complete the proof of \Cref{thm:colcombet}, recall the bijection $f\colon\Lambda\to[p]$, and define $E_1,\ldots,E_p$ so that $E_{f(\alpha)}=E(\alpha)$ for every $\alpha\in\Lambda$.
Statements \ref{item:colcombet-1} and \ref{item:colcombet-2} in the assertion of \Cref{thm:colcombet} then follow from \Cref{claim:E,claim:split}, respectively.

\section*{Acknowledgments}
We thank Michał Seweryn for several helpful discussions on \Cref{thm:excluding-a-minor}, and in particular for suggesting the use of Ramsey's Theorem in the proof of \Cref{lem:Kelly_poset_to_minor}.
We are grateful to the anonymous reviewers for their careful reading and helpful comments on an earlier version of the paper.

\bibliographystyle{alpha}
\bibliography{ref}

\end{document}